\def\Field{\mathbf{K}}
\def\Sspan#1{\langle #1\rangle}
\def\Span#1{\mathrm{span}(#1)}
\def\Alg{\mathbb{A}}
\def\congp{\stackrel{\centerdot}{\cong}}
\DeclareMathOperator{\type}{type}
\DeclareMathOperator{\im}{im}
\DeclareMathOperator{\Idm}{Idm}
\DeclareMathOperator{\Aut}{Aut}
\DeclareMathOperator{\End}{End}
\title{
    Inner isotopes associated with automorphisms of commutative associative algebras
    }
\author{
    Vladimir Tkachev
    }
\abstract{%
   The principal observation of the present paper is that an inner isotopy (i.e. a principal isotopy defined by an algebra endomorphism) is a very helpful instrument in constructing and studying interesting classes of nonassociative algebras.
By using methods developed in the paper, we define a new class of commutative nonassociative algebras obtained by inner isotopy from commutative associative polynomial algebras.
There is a natural bijection between isomorphism classes of our algebras and integer partitions of the algebra dimensions.
Among the interesting features of the nonassociative algebras constructed are that these algebras are generic, some of examples are axial and metrized algebras. We completely describe both the set of algebra idempotents and their spectra.
    }
\keywords{
    Polynomial rings, Nonassocative algebras, Medial algebras, Idempotents, Integer Partitions.
    }
\begin{document}

\bigskip

\begin{small}
\begin{quotation}
\hfill \textsl{To the memory of  Yakov Krasnov (1956-2023)}\par

\hfill\textsl{ my friend and colleague.}
\end{quotation}
\end{small}

\section{Introduction}\label{sec:intro}
By an algebra $(\Alg, \ast)$ we understand a nonassociative algebra over a  field $\Field$ of $\mathrm{char}(\Field) \ne 2,3$ with multiplication $\ast$. An element $c\in \Alg$ is called an idempotent if $c\ast c=c$. The set of nonzero idempotents of $(\Alg,\cdot)$ is denoted by $\Idm(\Alg,\ast)$. Given an element $a\in (\Alg,\ast)$, we denote by  $L_{\ast}(a)x \to a\ast x$ the left multiplication operator by $a$ and $\Alg_{\lambda}(a)$ the kernel of $(\lambda \mathds{1}_\Alg-L_{\ast}(a))$, where $\mathds{1}_\Alg$ is the identity operator.

A commutative algebra $(\Alg,\ast)$ is called \textit{isospectral} if the spectrum of $L_{\ast}(c)$ is the same for any nonzero idempotent $c$.
By using the syzygy method, it was established in \cite{KrTk18a}  that if an isospectral algebra $(\Alg, \ast)$ is generic (see Definition~\ref{def:gener} below) then the common spectrum of the algebra idempotents consists of simple eigenvalues satisfying $\lambda^{\dim \Alg}=1$. A further analysis given in \cite{KrTk23a} reveals that under some mild conditions, an isospectral generic algebra must be \textit{medial}, i.e. the algebra multiplication associates on pairs:
$$
(x\ast y)\ast(z\ast t)=(x\ast z)\ast(y\ast t),
$$
and, moreover, such an algebra is an inner isotopy of  a certain commutative associative algebra.
Recall that given an algebra $(\Alg, \ast)$, its \textit{inner isotope} $(\Alg, \ast_h)$ is the vector space $\Alg$ with the new multiplication
$$
x\ast_h y=h(x\ast y)=h(x)\ast h(y),
$$
where $h\in \Aut(\Alg, \ast)$ is an automorphism of $(\Alg, \ast)$. More precisely, the medial isospectral generic algebras discussed in \cite{KrTk23a} are exactly the inner isotopes of the quotient polynomial algebra $\Field[z]/(z^n-1)$ under the automorphism $\tau\in \Aut(\Field^n,\bullet)$ acting by  substitution $[p(z)]\to [p(\epsilon_n z)]$, where $\epsilon_n$ is a primitive root of unity of order $n$.
The corresponding inner isotope algebra $(\Field[z]/(z^n-1),\bullet_\tau)$ has many distinguished properties (see Section~\ref{subsec:3} below for $n=3$ and Section~\ref{sec:tau} for the general case).

In this regard, it is natural to find the full automorphism group $\Aut(\Field^n,\bullet)$ and characterize the corresponding (isomorphy classes of) inner isotopes of $(\Field^n,\bullet)$.
A similar approach is also relevant for an arbitrary algebra $(\Alg,\ast)$ and its inner isotopes $(\Alg,\ast_h)$.

These questions were an original motivation for the present paper.
As we shall see, an inner isotopy is a very helpful instrument for constructing  of interesting classes of nonassociative algebras.
In particular, this approach is already fruitful for the simplest possible case when the initial algebra is commutative and associative. In that case, any nontrivial inner isotopy destroys  the associativity but not so much: any inner isotope of a (commutative) associative algebra is always a medial algebra, see Corollary~\ref{cor:med}.

A relevant  in the present context is the mentioned above concept of \textit{generic}  nonassociative algebras \cite{KrTk18a}. The definition comes back to Segre's observation~\cite{Segre} that idempotents in an $n$-dimensional commutative algebra $(\Field^n, \ast)$ over an algebraically closed field $\Field$ with a basis $\{e_i\}_{1\le i\le n}$ can be interpreted as the set of common zeros of $n$ quadratic polynomials
$$
\Phi_k(x):=(x\ast x-x)_k=\sum_{1\le i,j\le n}a_{ijk}x_ix_j-x_k=0, \qquad 1\le k\le n,
$$
in $\Field^n$, where $x=\sum_{1\le i\le n}x_ie_i$ and $a_{ijk}$ are the structural constant of $\ast$ in the basis $\{e_i\}$. By the B\'{e}zout theorem the number of intersection points (i.e. the algebra idempotents) properly counted in the projective space $\Field \mathbb{P}^n$ is either $2^n$ or infinite. An intersection point $c$ is simple if the quadrics are in relative general position at $c$. On the algebra level, the latter is equivalent to that the idempotent $c$ is \textit{regular} \cite{Walcher1}, i.e. the Jacobian of the quadratic endomorphism $\Phi(x):\Field^n\to \Field^n$ is nonzero: $\det (D\Phi(x))=\det (L_{\ast}(c)-\frac12\mathds{1}_\Alg)\ne0$. Then the B\'{e}zout estimate holds:
\begin{equation}\label{Bezout}
\text{the number of \textit{regular} idempotents of $\Alg$ } \le 2^{\dim \Alg}.
\end{equation}
In general, if the ground field $\Field$ is not algebraically closed then the algebra $(\Alg,\ast,\Field')$ over an algebraic extension $\Field'$ of $\Field$ has the same dimension, and any idempotent regular in $(\Alg,\ast,\Field)$ is a regular idempotent  in $(\Alg,\ast,\Field')$. Applying \eqref{Bezout} to $(\Alg,\ast,\Field')$, this implies that \eqref{Bezout} also holds in $(\Alg,\ast,\Field)$. This motivates the following definition.

\begin{definition}\label{def:gener}
A commutative nonassociative algebra $\Alg$ over an arbitrary field $\Field$ is called \textbf{generic} if it has exactly $2^{\dim \Alg}$ distinct regular idempotents.
\end{definition}

\begin{remark}\label{rem:generic}
An algebra on a vector space $V$ is uniquely identified with a point of the set of all bilinear multiplication $V^*\otimes V^*\otimes V$ on $V$. In this sense, the subset of generic algebras is a Zariski open subset of $V^*\otimes V^*\otimes V$. The above definition has  appeared in \cite{KrTk18a}, \cite{KrTk18b} and it should not be confused with some similar analogues, for example in \cite{tevgen}, \cite{Tevelev}.
\end{remark}

In this paper, we show that under some natural assumptions any inner isotope of a commutative associative algebra is generic and, moreover, we explicitly characterize the set of idempotents  and  their spectral properties.

Another relevant concept here is the so-called axial algebras, i.e. the algebras generated by a finite  subset of idempotents which satisfy a common fusion law. More precisely, a \textbf{fusion law} is a set $\mathcal{F}\subset \Field$ together with
a symmetric binary map $\theta:\mathcal{F}\times \mathcal{F}\to 2^{\mathcal{F}}$.

\begin{definition}[\cite{KhasrawShpectorov20}]\label{def:ax}
Given a fusion law $\mathcal{F}$, a commutative algebra $(\Alg,\ast)$ over $\Field$ together with a distinguished subset of elements $X$ (the called \textit{axes}) is an $\mathcal{F}$-axial algebra if
$(\Alg,\ast)$ is generated by $X$, for each $c \in  X$, $c$ is a semisimple idempotent, namely $\Alg=\oplus_{\lambda\in \mathcal{F}}\Alg_{\lambda}(c)$ and  for $\lambda,\mu\in \mathcal{F}$:
$$
\Alg_{\lambda}(c)\ast \Alg_{\mu}(c)\subset \Alg_{\theta(\lambda, \mu)}(c).
$$
An axial algebra is called \textit{primitive} if each idempotent in $X$ is primitive, i.e. $\dim \Alg_{1}(a)=1$ for any $a\in X$.
\end{definition}

The isospectral generic algebras considered in \cite{KrTk18a} and \cite{KrTk23a} are   \textit{axial} algebras.
The axial algebra concept is an important tool in understanding  of finite groups: it appears that many interesting groups (for example, 3-transposition groups including certain simple sporadic groups, in particular, the monster group) arise as automorphism groups of cubic forms on suitable modules \cite{Smith77}, \cite{Norton94}, \cite{Suzuki86b}, \cite{Ryba96}, \cite{Griess85} by virtue of a correspondence between certain involutions generating a group and a distinguished family of idempotents in an appropriate  (non)associative commutative algebra.
Such a correspondence normally is very individual and drastically depends on a source group/algebra, as well as its combinatorial or geometrical realizations \cite{DeMPS20}, \cite{Fox2022}. Some recent developments in the axial algebra project can be found \cite{Rehren17}, \cite{HRS15}, \cite{DeMedts2018}, \cite{FranchiShpectorov22}, \cite{KhasrawShpectorov20}, \cite{HallShpectorov21}, \cite{CastilloMcInroy21} and the references therein.

The algebras discussed in the present paper fit perfectly this context and provide us with new examples of axial algebras with known automorphism groups. We only outline some partial results in this direction (see especially the explicit examples in Section~\ref{sec:example}), while the general discussion will be addressed elsewhere in the second part of this paper.

\begin{remark}
We are very grateful to the referees for pointing us out the existence of the isomorphism \eqref{isom}. In the original version of our paper  \cite{Tk23pr}, we used a different exposition based on the polynomial model $\Field[z]/P$. The isomorphism \eqref{isom}  simplifies several proofs below and the structure of the idempotent set becomes more transparent  when written in this form. Still, we believe that the polynomial model $\Field[z]/P$ is also of interest, especially for some particular choices of the polynomial $P$; see fore example, the criterion given in Proposition~\ref{pro:Lam} below.  We refer an interested reader to \cite{Tk23pr} for the original presentation and more details concerning the polynomial model.
\end{remark}

\bigskip
The paper is \textbf{organized} as follows. In Section~\ref{sec:prelim}, we recall some general concepts and facts used in the paper. In section~\ref{sec:inner}, we outline the general properties of inner isotopes of an arbitrary algebra and in section~\ref{sec:comm} we specify these results for commutative associative algebras.
In particular, we show that an inner isotope of a commutative associative  algebra must be medial.
In section~\ref{sec:categor}, we develop an appropriate category-theoretical context for our considerations.
The main result of this section states that the categories of  calibrated special commutative medial algebras is isomorphic to the category of calibrated commutative associative algebras.
 In section~\ref{sec:idem} we discuss the properties of idempotents in an arbitrary medial algebra.
In section~\ref{sec:auto} we study the automorphism group of a quotient polynomial algebra and determine its inner isotopes.
We characterize the set of idempotents and theirs spectra in Section~\ref{sec:single}. The automorphism groups of the obtained algebras are studied in Section~\ref{sec:automorph}.
Finally, in section~\ref{sec:example} we illustrate our results  in the case $n=3$.

\section{Preliminaries}\label{sec:prelim}
By $\bar n$ we denote the set of indices $\{1,2,\ldots,n\}$. We recall some standard definitions following \cite{Schafer}, see also \cite[p.149]{Albert65}. $(\mathbb{A},\bullet)$ denotes an algebra with a multiplication $\bullet$ on a vector space $\mathbb{A}$. All algebras below are assumed to be commutative but maybe nonassociative.

If $(\mathbb{B},\bullet)$ and $(\mathbb{C},\bullet)$ are ideals of an algebra $(\mathbb{A},\bullet)$ such that $\mathbb{A}$ as a vector space is the direct sum of $\mathbb{B}$ and $\mathbb{C}$ then $(\mathbb{A},\bullet)$ is called the \textit{direct sum}:
$$
(\mathbb{A},\bullet)=(\mathbb{B},\bullet)\oplus (\mathbb{C},\bullet).
$$
Note that in this case $\mathbb{B}\bullet \mathbb{C}=0$, so that $(\mathbb{B},\bullet)$ and $(\mathbb{C},\bullet)$ are \textit{orthogonal}.

Given any two arbitrary algebras  $(\mathbb{B},\star)$ and $(\mathbb{C},*)$ over a field $\Field$, one can construct an algebra $(\mathbb{A},\bullet)$ over $\Field$ such that $(\mathbb{A},\bullet)$ is the direct sum $(\mathbb{A},\bullet)=(\mathbb{B}',\bullet)\oplus (\mathbb{C}',\bullet)$ of ideals
$(\mathbb{B}',\bullet)$ and $(\mathbb{C}',\bullet)$ which are isomorphic respectively to $(\mathbb{B},\star)$ and $(\mathbb{C},*)$: $\mathbb{A}$ as a vector space  is the Cartesian product  of $\mathbb{B}$ and $\mathbb{C}$ with the multiplicative structure $\bullet$ defined by the coordinate-wise multiplication $(b_1,c_1)\bullet (b_2,c_2)=(b_1\star b_2, c_1* c_2)$ for elements $(b_1,c_1),(b_2,c_2)\in \mathbb{B}\times \mathbb{C}$. Then, for example, $(\mathbb{B},\star)$ is isomorphic to the ideal $(\mathbb{B}',\bullet)=((\mathbb{B},0),\bullet)$ of $(\mathbb{A},\bullet)$. The resulting algebra is called the direct product of algebras $(\mathbb{B},\star)$ and $(\mathbb{C},*)$. In the above notation,
$$
(\mathbb{B},\star)\times (\mathbb{C},*)\cong (\mathbb{B}\times \mathbb{C},\bullet)\cong (\mathbb{B}',\bullet)\oplus (\mathbb{C}',\bullet)
$$
As in the case of vector spaces, the notion of direct sum extends to an arbitrary set of summands. We shall have occasion to use only finite direct sums.

In particular, rhe field  $\Field$ is a commutative associative algebra over itself, denoted by $(\Field,\cdot)$. The direct summa of $n\ge1$ copies  of $\Field$ with the coordinate-wise multiplication is denoted by $(\Field^{n} ,\bullet)$. The $\bullet$-idempotents
\begin{equation}\label{standard}
e_i=(0,\ldots,1,\ldots,0), \qquad 1\le i\le n
\end{equation}
form the standard basis of  $\Field^{n}$. Let $x_i$ denote the corresponding coordinate of $x\in \Field^{n}$. Given any index $j\in \{1,2,\ldots,n\}$, the subspaces
\begin{equation}\label{Vj}
\Sspan{e_i}:=\{x\in \Field^{n}: \, x_i=0\, \text{ for all $i\ne j$}\}
\end{equation}
are (pairwise orthogonal) ideals of $(\Field^{n},\bullet)$ and
$$
(\Field^{n} ,\bullet)=\bigoplus_{j=1}^n \Sspan{e_j}.
$$

Below we shall need the following observation.
\begin{proposition}\label{pro:decompose}
Let $(\Alg,\bullet)=\bigoplus_{j=1}^r (\mathbb{B}_j,\bullet)$, where $(\mathbb{B}_j,\bullet)$ are ideals. Then $c$ is an idempotent in $(\Alg,\bullet)$ if and only if there are (uniquely determined) pairwise orthogonal idempotents $c_j\in (\mathbb{B}_j,\bullet)$ such that $c=\sum_{j=1}^rc_j$. Moreover, in this case
\begin{equation}\label{Lcgen}
\det (\lambda\mathds{1}_\Alg)-L_\bullet(c))=\prod_{j=1}^r\det (\lambda\mathds{1}_{\mathbb{B}_j}-L_\bullet(c_j))
\end{equation}
The algebra $(\Alg,\bullet)$ is generic if and only each ideal $(\mathbb{B}_j,\bullet)$ is so.
\end{proposition}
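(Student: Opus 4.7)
The plan is to exploit the orthogonality $\mathbb{B}_i\bullet\mathbb{B}_j=0$ for $i\ne j$, which was noted earlier as an immediate consequence of the direct sum decomposition into ideals. First, I would write any $c\in \Alg$ uniquely as $c=\sum_{j=1}^r c_j$ with $c_j\in \mathbb{B}_j$ (vector space direct sum). Expanding $c\bullet c=\sum_{i,j} c_i\bullet c_j=\sum_j c_j\bullet c_j$ by orthogonality, and using the uniqueness of the decomposition, $c\bullet c=c$ becomes $c_j\bullet c_j=c_j$ for each $j$. This gives the first assertion, with the $c_j$ automatically pairwise orthogonal (they live in mutually orthogonal ideals).

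Next, for the determinant formula, I would observe that each $\mathbb{B}_k$ is $L_\bullet(c)$-invariant (since $\mathbb{B}_k$ is an ideal), and for $x\in \mathbb{B}_k$ orthogonality gives
\[
L_\bullet(c)x=\sum_{j=1}^r c_j\bullet x=c_k\bullet x=L_\bullet(c_k)\big|_{\mathbb{B}_k} x.
\]
Thus $L_\bullet(c)$ is block-diagonal with respect to $\Alg=\bigoplus_j \mathbb{B}_j$, with blocks $L_\bullet(c_j)|_{\mathbb{B}_j}$, and the multiplicativity of determinants on a block-diagonal operator yields \eqref{Lcgen}.

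For the genericity statement, specializing \eqref{Lcgen} to $\lambda=\tfrac12$ shows that $c$ is a regular idempotent of $(\Alg,\bullet)$ if and only if each component $c_j$ is a regular idempotent of $(\mathbb{B}_j,\bullet)$; here I include the zero element, which is always a regular idempotent since $\det(-\tfrac12\mathds{1})\ne 0$. Consequently, the set of regular idempotents of $\Alg$ is in bijection with the Cartesian product of the sets of regular idempotents of the $\mathbb{B}_j$'s, and the corresponding counts multiply. By the B\'ezout estimate \eqref{Bezout} each factor is at most $2^{\dim \mathbb{B}_j}$, so the product equals $2^{\dim \Alg}=2^{\sum_j\dim \mathbb{B}_j}$ if and only if every factor attains its maximum, i.e.\ every $\mathbb{B}_j$ is generic.

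The argument is essentially bookkeeping, with the only subtle point being the treatment of the zero summand: one must remember that $c_j=0$ is an allowed (and regular) contribution, which is why the generic counts multiply cleanly to give $2^{\dim \Alg}$ rather than something smaller. Once that is in place, no further obstacle arises.
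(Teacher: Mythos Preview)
Your argument is correct and follows essentially the same route as the paper: both use orthogonality of the ideals to reduce $c\bullet c=c$ componentwise, invoke the block-diagonal form of $L_\bullet(c)$ for \eqref{Lcgen}, and finish the genericity claim with a product count against the B\'ezout bound \eqref{Bezout}. Your version is in fact slightly tidier, since you argue directly that regular idempotents of $\Alg$ biject with tuples of regular idempotents of the $\mathbb{B}_j$ (via $\lambda=\tfrac12$ in \eqref{Lcgen}), whereas the paper reaches the same conclusion a bit more circuitously.
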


\begin{proof}
It follows for the assumptions that $\mathbb{B}_i\bullet \mathbb{B}_j=0$ for $i\ne j$. Let $x\in \Alg$ and let $x=\sum_{j=1}^rx_j$ be the corresponding orthogonal decomposition, $x_j\in \mathbb{B}_j$. Then $x\bullet x=\sum_{j=1}^r x_j\bullet x_j$, where $x_j\bullet x_j\in \mathbb{B}_j$ and $x_i\bullet x_j=0$  whenever $i\ne j$. It follows that $x$ is an idempotent on $(\Alg,\bullet)$ if and only if $x_j$ is an idempotent in $(\mathbb{B}_j,\bullet)$ for all $1\le j\le r$, thus implying the first part of the proposition.

Next, let $(\Alg,\bullet)$ be generic. Then $(\Alg,\bullet)$ contains exactly $2^n$ distinct regular idempotents, where $n=\dim \Alg$. It follows from \eqref{Lcgen} that all idempotents in each ideal $\mathbb{B}_j$ are regular. If $\dim \mathbb{B}_j=n_j$ then $n=n_1+\ldots +n_r$. It follows from the above idempotent decomposition  that $m_1\cdot \ldots \cdot m_r=2^n$, where $m_j$ is the cardinality of the set of idempotents in $\mathbb{B}_j$, hence $m_j=2^{k_j}$ for some nonnegative integer $k_j$. On the other hand by the Bezout inequality \eqref{Bezout} we have $m_j=2^{k_j}\le 2^{n_j}$, hence $2^n=m_1\cdot \ldots \cdot m_r\le 2^{n_1+\ldots +n_r}=2^n$ which implies that in fact $m_j=2^{k_j}= 2^{n_j}$, i.e. each $\mathbb{B}_j$ is generic. In the converse direction, if each ideal $\mathbb{B}_j$ is generic then $(\Alg,\bullet)=\bigoplus_{j=1}^r (\mathbb{B}_j,\bullet)$ has by Proposition~\ref{pro:decompose} at least $2^{n_1}\cdot \ldots 2^{n_r}=2^n$ distinct regular idempotents, hence it is generic.
\end{proof}

We also fix some standard terminology and facts from permutation theory. Any permutation $\sigma\in S_n$ can be written in \textit{cyclic form} (or a disjoint cycle decomposition)
\begin{equation}\label{cyclic1}
\sigma=\sigma_1\ldots \sigma_r \in S_n.
\end{equation}
For any $1\le i,j\le r$, the \textit{cycles} $\sigma_i$ and $\sigma_j$ commute, therefore the order in \eqref{cyclic1} in inessential. Then cycles $\sigma_j$ can be naturally thought of as orbits of a faithful action of the cyclic group $\langle \sigma\rangle$ generated by $\sigma$ on the et of indices $\bar n:=\{1,\ldots,n\}$. To differ a cycle $\sigma_j$ as a group element and as an orbit, we denote the latter by $[\sigma_j]\subset \bar n$. By $|\sigma_i|$ we denote the \textit{length} of the cycle $\sigma_i$, i.e. the cardinality of the orbit $[\sigma_i]$. The \textit{type of a permutation} is the integer partition of $n$,
\begin{equation}\label{alls}
|\sigma_1|+\ldots+|\sigma_r|=n,
\end{equation}
formed from the cycle; we write it by $\type(\sigma)=(|\sigma_1|,\ldots,|\sigma_r|)$. 

\begin{definition}\label{def:adm}
Given $\sigma\in S_n$ let  $\sigma=\sigma_1\ldots \sigma_r$ be its disjoint cycle decomposition and $s_i=|\sigma_i|$. A field $\Field$ will be said $\sigma$-\textbf{admissible}, if it is a splitting field for all polynomials $z^t-1$, where $t\in S:=\{s_1,\ldots,s_r, 2^{s_1}-1,\ldots, 2^{s_r}-1\}$. If $\Field$ has a finite characteristic, to avoid complications,  we additionally assume that the characteristic is co-prime with all numbers in $S$,
\end{definition}

\section{Inner isotopes}\label{sec:inner}

Two algebras $(\Alg,\diamond)$ and $(\mathbb{B}, \cdot)$ are called \textbf{isotopic}, if there is an \textit{isotopism} $(\alpha,\beta,\gamma)$, i.e. a triple of nondegenerate linear maps $\Alg\to \mathbb{B}$ such that $\alpha(x)\cdot \beta(y)=\gamma(x\diamond y)$ holds for any $x,y\in \Alg$. If $\alpha=\beta$ the isotopy is called \textit{strong}. If $\Alg=\mathbb{B}$ and $\gamma=\mathds{1}_\Alg$ in the identity map, then an isotopy is called a \textbf{principal autotopy}.

\begin{definition}
Given an  algebra $(\Alg,\bullet)$ and an algebra endomorphism  $h\in \End(\Alg,\bullet)$, we define a new algebra $(\Alg,\bullet_h)$ on the vector space $\Alg$ by
\begin{equation}\label{hdef}
x\bullet_h  y=h(x)\bullet h(y)=h(x\bullet y).
\end{equation}
The new algebra $(\Alg,\bullet_h)$ is called a \textbf{weak inner isotope} of $(\Alg,\bullet)$. If $h\in \Aut(\Alg,\bullet)$,  $(\Alg,\bullet_h)$ is called an \textbf{inner isotope}.
\end{definition}


The above  definition is a  particular case of a \textbf{strong principal autotopy}; more precisely,  the algebra $(\Alg,\bullet_h)$ is an principal isotopic of $(\Alg,\bullet)$ with $(h,h,\mathds{1}_\Alg)$ where  the map $h$ is an $\bullet$-\textit{algebra homomorphism}.

\begin{remark}
In the converse direction, if an endomorphism $h$ is invertible then  $(\Alg,\bullet)$ is  an inner isotope of $(\Alg,\bullet_h)$. Indeed, by virtue of \eqref{hdef}, $x\bullet y=h^{-1}(x\bullet_h y)$, therefore the linear endomorphism  $h^{-1}$ is also an $\bullet_h$-algebra endomorphism.
\end{remark}

We distinguish  the case when $h$ is an bijective, i.e. $h$ is an automorphism of $(\Alg, \bullet)$ and $(\Alg, \bullet_h)$ is an inner isotopy. In this case, if $h,f\in \Aut(\Alg,\bullet)$ then $(\mathds{1}_\Alg,\mathds{1}_\Alg, f\circ h^{-1})$ is an isotopy between the corresponding inner isotopes:
\begin{equation}\label{fh}
x\bullet_f  y=f(x\bullet y)=f\circ h^{-1}(x\bullet_h  y),
\end{equation}
where $\circ$ here and in what follows denote the composition of two maps.

Comparing \eqref{fh} with \eqref{hdef} a natural question arises: when the algebra  $(\Alg,\bullet_f)$ is an \textit{inner} isotope of $(\Alg,\bullet_h)$? Note that $f\circ h^{-1}\in \Aut(\Alg,\bullet)$, but it is not clear whether $f\circ h^{-1}\in \Aut(\Alg,\bullet_h)$, see the diagram below
$$
\begin{tikzcd}
 & (\Alg,\bullet) \arrow[swap, "h",dl] \arrow[dr,"f"] \\
(\Alg,\bullet_h)  \arrow[rr,"f\circ h^{-1}"] && (\Alg,\bullet_f)
\end{tikzcd}
$$
The next proposition reveals that this is true for commuting automorphisms only.

\begin{proposition}\label{pro:conj}
Let $(\Alg,\bullet)$ satisfy
\begin{equation}\label{simple}
\Alg^{\bullet2}:=\Alg\bullet \Alg=\Alg
\end{equation}
and let $h,f\in \Aut(\Alg,\bullet)$. Then $(\Alg,\bullet_f)$ is an inner isotope of $(\Alg,\bullet_h)$ if and only if $f$ and $h$ commute.
\end{proposition}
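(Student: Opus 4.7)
The plan is to reduce both directions to a single computation: identify the (essentially unique) candidate automorphism relating the two isotopes, then test whether it actually lies in $\Aut(\Alg,\bullet_h)$.

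First I would use \eqref{fh} to rewrite
\[
x\bullet_f y = (f\circ h^{-1})(x\bullet_h y) \quad \text{for all } x,y\in\Alg,
\]
and observe that $\Alg\bullet_h \Alg = h(\Alg\bullet\Alg)=h(\Alg)=\Alg$, so $(\Alg,\bullet_h)$ also satisfies \eqref{simple}. The point of \eqref{simple} is uniqueness: if some $g\in\Aut(\Alg,\bullet_h)$ realizes $(\Alg,\bullet_f)$ as an inner isotope of $(\Alg,\bullet_h)$, then the equality $g(x\bullet_h y)=(f\circ h^{-1})(x\bullet_h y)$ on the spanning set $\Alg\bullet_h\Alg=\Alg$ forces $g=f\circ h^{-1}$. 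So the whole question collapses to asking whether the linear bijection $\psi:=f\circ h^{-1}$ happens to lie in $\Aut(\Alg,\bullet_h)$.

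Next I would test multiplicativity of $\psi$ with respect to $\bullet_h$. Since $\psi$ is a composition of two $\bullet$-automorphisms, it already belongs to $\Aut(\Alg,\bullet)$, so unfolding the definition \eqref{hdef} gives
\[
\psi(x)\bullet_h \psi(y) = h\bigl(\psi(x)\bullet\psi(y)\bigr) = h\bigl(\psi(x\bullet y)\bigr) = (h\circ f\circ h^{-1})(x\bullet y),
\]
while on the other side
\[
\psi(x\bullet_h y) = (f\circ h^{-1})\bigl(h(x\bullet y)\bigr) = f(x\bullet y).
\]
So $\psi\in\Aut(\Alg,\bullet_h)$ is equivalent to $(h\circ f\circ h^{-1})(x\bullet y) = f(x\bullet y)$ for all $x,y\in\Alg$. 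Invoking \eqref{simple} once more, this is equivalent to $h\circ f\circ h^{-1} = f$ as linear maps on $\Alg$, i.e. to $fh=hf$.

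Putting the two halves together gives the claim. The only delicate point is the uniqueness step that pins down $g$ as $f\circ h^{-1}$; without the hypothesis \eqref{simple} one could in principle modify $g$ on a linear complement of $\Alg\bullet_h\Alg$ and obtain a different automorphism still realizing the isotopy, which would break the "only if" direction. Everything else is a direct unfolding of \eqref{hdef} plus the fact that $\Aut(\Alg,\bullet)$ is closed under composition.
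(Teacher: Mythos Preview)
Your proof is correct and follows essentially the same route as the paper: both arguments use \eqref{fh} together with $\Alg^{\bullet2}=\Alg$ to pin down the candidate map as $g=f\circ h^{-1}$, and then verify that multiplicativity of this map with respect to $\bullet_h$ is equivalent to $hf=fh$. The only organizational difference is that you package the two directions into a single ``$\psi\in\Aut(\Alg,\bullet_h)$ if and only if'' computation, whereas the paper treats the forward and converse implications separately; the underlying identities are the same.
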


\begin{proof}
First  suppose  that $(\Alg,\bullet_f)$ is an inner isotope of $(\Alg,\bullet_h)$, i.e.
\begin{equation}\label{findent}
x\bullet_f  y=g(x\bullet_h  y)=g(x)\bullet_h  g(y), \qquad \forall x,y\in \Alg,
\end{equation}
holds for some $\bullet_h$-algebra endomorphism $g$. Comparing this with \eqref{fh}, we obtain
$$
f(x\bullet y)=f\circ h^{-1}(x\bullet_h  y)=g(x\bullet_h  y)=(g\circ h)(x \bullet y).
$$
Since the latter holds for any $x,y\in \Alg$ we conclude that $f=g\circ h$ on $\Alg^{\bullet2}$, and thus by \eqref{simple} on $\Alg$. Therefore $g=f\circ h^{-1}$. In particular, $g$ is an $\bullet$-automorphism too. Therefore, using the second identity in \eqref{findent} we get
\begin{align*}
f(x\bullet  y)&=g(x)\bullet_h  g(y)=h(g(x)\bullet g(y))\\
&=(h\circ g(x))\bullet (h\circ g(y))\\
&=(h\circ g)(x \bullet y)\qquad \forall x,y\in \Alg,
\end{align*}
implying that $f=h\circ g=h\circ f\circ h^{-1}$ on $\Alg^{\bullet2}=\Alg$, hence $f\circ h=h\circ f$ as desired.

Conversely, if $f\circ h=h\circ f$ then using \eqref{fh}
\begin{equation}\label{fh1}
x\bullet_f  y=g(x\bullet_h  y),
\end{equation}
where $g=f\circ h^{-1}= h^{-1}\circ f\in \Aut(\Alg,\bullet)$. We have
$$
g(x\bullet_h  y)=g\circ h(x\bullet  y)=f\circ h^{-1}\circ h(x\bullet  y)=f(x\bullet  y),
$$
and on the other hand,
$$
g(x)\bullet_h  g(y)=h\circ (g(x)\bullet  g(y))=(h\circ g)(x\bullet  y)=f(x\bullet  y),
$$
implying $g(x\bullet_h  y)=g(x)\bullet_h  g(y)$, hence $g\in \Aut(\Alg,\bullet_h)$, i.e. it follows from \eqref{fh1} that $(\Alg,\bullet_f)$ is an inner isotope of $(\Alg,\bullet_h)$.
\end{proof}

The next two propositions explain when two inner isotopes are isomorphic.

\begin{proposition}
\label{pro:isomA}
Let $f:(\Alg,\bullet)\to (\Alg',\bullet')$ be an algebra isomorphism and let $h\in \Aut(\Alg,\bullet)$. Then $h':=f\circ h\circ f^{-1}\in \Aut(\Alg',\bullet')$ and $f:(\Alg,\bullet_h)\to (\Alg',\bullet'_{h'})$ is an algebra isomorphism.
\end{proposition}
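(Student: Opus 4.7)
The plan is a direct diagram chase based on the definitions of $\bullet_h$ and $\bullet'_{h'}$ and the fact that $f$ is a $\bullet$-$\bullet'$-homomorphism. Nothing here requires invoking any earlier result of the paper beyond the definition of an inner isotope in \eqref{hdef}; the only potentially subtle point is keeping careful track of which multiplication lives on which side.

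First I would check that $h':=f\circ h\circ f^{-1}$ is an element of $\Aut(\Alg',\bullet')$. Linearity and invertibility are immediate since $f$ and $h$ are both linear bijections. To show $h'$ is a $\bullet'$-homomorphism, I would compute, for $x',y'\in \Alg'$,
\[
h'(x'\bullet' y')=f\bigl(h(f^{-1}(x')\bullet f^{-1}(y'))\bigr)=f\bigl(h(f^{-1}(x'))\bullet h(f^{-1}(y'))\bigr)=h'(x')\bullet' h'(y'),
\]
where in the three equalities I use, successively, that $f^{-1}$ is a $\bullet'$-$\bullet$-homomorphism, that $h\in \Aut(\Alg,\bullet)$, and that $f$ is a $\bullet$-$\bullet'$-homomorphism.

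Next I would verify that $f:(\Alg,\bullet_h)\to(\Alg',\bullet'_{h'})$ respects the new multiplications. The key algebraic identity is the conjugation relation $h'\circ f=f\circ h$, which is immediate from the definition of $h'$. Using \eqref{hdef} on both sides, I would then compute
\[
f(x\bullet_h y)=f\bigl(h(x)\bullet h(y)\bigr)=(f\circ h)(x)\bullet'(f\circ h)(y)=(h'\circ f)(x)\bullet'(h'\circ f)(y)=f(x)\bullet'_{h'}f(y).
\]
Since $f$ is already a linear bijection, this shows $f$ is an algebra isomorphism between $(\Alg,\bullet_h)$ and $(\Alg',\bullet'_{h'})$, completing the proof.

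I do not anticipate any real obstacle; the entire argument is a formal manipulation. The one place where a reader might stumble is mixing up the two multiplications $\bullet$ and $\bullet'$ (and their twists $\bullet_h$ and $\bullet'_{h'}$), so in the write-up I would be explicit about which multiplication occurs at each step, as indicated above.
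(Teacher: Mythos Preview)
Your proof is correct and follows essentially the same approach as the paper's own proof: both verify $h'\in\Aut(\Alg',\bullet')$ by the same three-step computation, and then check that $f$ intertwines the twisted products by unwinding the definition \eqref{hdef}. The only cosmetic difference is that in the second part you use the form $x\bullet_h y=h(x)\bullet h(y)$ together with $h'\circ f=f\circ h$, whereas the paper uses the equivalent form $x\bullet_h y=h(x\bullet y)$ and writes $f\circ h=h'\circ f$ as $f\circ h\circ f^{-1}$ applied to $f(x)\bullet' f(y)$; these are the same computation read slightly differently.
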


\begin{proof}
We have $f(x\bullet y)=f(x)\bullet' f(y)$ and $h(x)\bullet h(y)=h(x\bullet y)$ for any $x,y\in \Alg$, therefore
\begin{align*}
f\circ h\circ f^{-1}( x'\bullet' y')&=f\circ h(f^{-1}( x')\bullet f^{-1}(y'))\\
&=f(h(f^{-1}( x'))\bullet h(f^{-1}(y')))\\
&=f(h(f^{-1}( x'))\bullet' f(h(f^{-1}(y')),
\end{align*}
readily implying that  $h':=f\circ h\circ f^{-1}\in \Aut(\Alg',\bullet')$. Furthermore,
$$
f(x\bullet_h  y)=f\circ h(x\bullet  y)=f\circ h\circ f^{-1}(f(x)\bullet'  f(y))=f(x)\bullet'_{h'}  f(y)
$$
implying that $f:(\Alg,\bullet_h)\to (\Alg',\bullet'_{h'})$ is an algebra isomorphism.

\end{proof}

\begin{proposition}
\label{pro:isomiso}
Let $h,f\in \Aut(\Alg,\bullet)$. If $h$ and $f$ conjugate in $\Aut(\Alg,\bullet)$ then $(\Alg,\bullet_h)$ is isomorphic to $(\Alg,\bullet_f)$.
In the converse direction, if $g:(\Alg,\bullet_h)\to (\Alg,\bullet_f)$ is an isomorphism and $g\in \Aut(\Alg,\bullet)$  then $h$ and $f$ conjugate in $\Aut(\Alg^{\bullet2},\bullet)$.
\end{proposition}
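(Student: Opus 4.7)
The forward direction is essentially a specialization of Proposition~\ref{pro:isomA}. Suppose $f=\phi\circ h\circ \phi^{-1}$ for some $\phi\in\Aut(\Alg,\bullet)$. Applying Proposition~\ref{pro:isomA} with $(\Alg',\bullet')=(\Alg,\bullet)$ and the isomorphism $\phi$, the conjugate automorphism $h':=\phi\circ h\circ \phi^{-1}$ coincides with $f$, and the proposition yields that $\phi:(\Alg,\bullet_h)\to(\Alg,\bullet_f)$ is an algebra isomorphism.

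For the converse, I would unfold the two pieces of structure carried by $g$. On the one hand, since $g$ is an isomorphism $(\Alg,\bullet_h)\to(\Alg,\bullet_f)$, for all $x,y\in\Alg$
\begin{equation*}
g(x\bullet_h y)=g(x)\bullet_f g(y)=f\bigl(g(x)\bullet g(y)\bigr),
\end{equation*}
and because $g\in\Aut(\Alg,\bullet)$ the right-hand side equals $f\bigl(g(x\bullet y)\bigr)=(f\circ g)(x\bullet y)$. On the other hand, by the definition of the inner isotope multiplication $\bullet_h$,
\begin{equation*}
g(x\bullet_h y)=g\bigl(h(x\bullet y)\bigr)=(g\circ h)(x\bullet y).
\end{equation*}
Equating the two expressions gives $g\circ h=f\circ g$ on every element of the form $x\bullet y$, hence by linearity on the whole subspace $\Alg^{\bullet2}=\Alg\bullet\Alg$.

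To convert this into a conjugacy statement, I would next observe that any $\bullet$-automorphism $\psi\in\Aut(\Alg,\bullet)$ sends $\Alg^{\bullet2}$ onto itself, since $\psi(x\bullet y)=\psi(x)\bullet \psi(y)\in\Alg^{\bullet2}$ and $\psi^{-1}$ has the same property. Applying this to $g$, $h$, and $f$, all three restrict to elements of $\Aut(\Alg^{\bullet2},\bullet)$, and the identity $g\circ h=f\circ g$ on $\Alg^{\bullet2}$ becomes the conjugacy $f=g\circ h\circ g^{-1}$ inside $\Aut(\Alg^{\bullet2},\bullet)$, as claimed.

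The only subtle point, and the reason the converse weakens to conjugacy in $\Aut(\Alg^{\bullet2},\bullet)$ rather than in $\Aut(\Alg,\bullet)$, is that the identity $g\circ h=f\circ g$ is only forced on elements expressible as products $x\bullet y$; no assumption analogous to \eqref{simple} is made in this half of the statement, so one has no control over the behaviour of $g\circ h$ and $f\circ g$ on a vector-space complement of $\Alg^{\bullet2}$. This is exactly the point addressed by Proposition~\ref{pro:conj}, and recognising that one cannot extend the identity past $\Alg^{\bullet2}$ is the only real step beyond formal manipulation.
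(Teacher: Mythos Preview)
Your proof is correct and follows essentially the same line as the paper's: the forward direction is the identical computation (you package it via Proposition~\ref{pro:isomA}, the paper repeats it inline), and in the converse you derive $g\circ h=f\circ g$ on $\Alg^{\bullet2}$ exactly as the paper does. Your extra paragraph checking that $g,h,f$ restrict to automorphisms of $(\Alg^{\bullet2},\bullet)$ is a welcome clarification the paper leaves implicit.
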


\begin{proof}
If $f$ and $h$ conjugate in $\Aut(\Alg,\bullet)$ then $f=g\circ h\circ g^{-1}$ for some $g\in\Aut(\Alg,\bullet)$, hence
$$
g(x\bullet_h y)=g\circ h(x\bullet y)=f\circ g(x\bullet y)=f(g(x)\bullet g(y))=g(x)\bullet_f g(y),
$$
hence $g:(\Alg,\bullet_h)\to (\Alg,\bullet_f)$ is an algebra isomorphism. Conversely, by our assumptions we have
$$
g\circ h(x\bullet y)=g(x\bullet_h y)=g(x)\bullet_f g(y)=f(g(x)\bullet g(y))=f\circ g(x\bullet y),
$$
hence $g\circ h=f\circ g$ on $\Alg^{\bullet2}$.
\end{proof}

\medskip
\begin{corollary}\label{cor:conj}
Let $(\Alg,\bullet)$ satisfy \eqref{simple} and $h,f\in \Aut(\Alg,\bullet)$. Then $(\Alg,\bullet_h)$ is isomorphic to $(\Alg,\bullet_f)$ if and only if $h$ and $f$ conjugate in $\Aut(\Alg,\bullet)$.
\end{corollary}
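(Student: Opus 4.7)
The plan is to read Corollary~\ref{cor:conj} as the streamlined version of Proposition~\ref{pro:isomiso} once the hypothesis \eqref{simple} is imposed: the assumption $\Alg^{\bullet 2} = \Alg$ collapses the group $\Aut(\Alg^{\bullet 2}, \bullet)$ appearing in the converse half of Proposition~\ref{pro:isomiso} to $\Aut(\Alg, \bullet)$, which aligns exactly with the wording of the corollary.

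The sufficiency direction ($\Leftarrow$) is immediate: if $h = k \circ f \circ k^{-1}$ with $k \in \Aut(\Alg, \bullet)$, the first half of Proposition~\ref{pro:isomiso} supplies an isomorphism $(\Alg, \bullet_h) \cong (\Alg, \bullet_f)$ realized by $k$ itself. For necessity ($\Rightarrow$), I would start with an arbitrary algebra isomorphism $g : (\Alg, \bullet_h) \to (\Alg, \bullet_f)$ and rewrite its defining relation $g \circ h(x \bullet y) = f(g(x) \bullet g(y))$ as $\psi(x \bullet y) = g(x) \bullet g(y)$, where $\psi := f^{-1} \circ g \circ h$. This exhibits $(g, g, \psi)$ as a principal strong autotopy of $(\Alg, \bullet)$, and \eqref{simple} ensures that $\psi$ is uniquely determined by $g$ on all of $\Alg$. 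With $g$ upgraded to lie in $\Aut(\Alg, \bullet)$, the converse half of Proposition~\ref{pro:isomiso} then yields $g \circ h = f \circ g$ on $\Alg^{\bullet 2} = \Alg$, i.e.\ $h$ and $f$ are conjugate in $\Aut(\Alg, \bullet)$.

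The principal obstacle, and the step I expect to carry the real content of the argument, is that the converse half of Proposition~\ref{pro:isomiso} demands the extra condition $g \in \Aut(\Alg, \bullet)$, whereas an arbitrary isomorphism between two inner isotopes need not be a $\bullet$-automorphism (already in simple coordinate algebras the isotope automorphism group can strictly contain $\Aut(\Alg, \bullet)$). The hard part of the plan is therefore to use \eqref{simple} to show that whenever an isomorphism between $(\Alg, \bullet_h)$ and $(\Alg, \bullet_f)$ exists one can either replace $g$ by a $\bullet$-automorphism without loss of generality, or extract a conjugator $g' \in \Aut(\Alg, \bullet)$ directly from the autotopy data $(g, g, \psi)$; the rigidity of principal autotopies under the condition $\Alg \bullet \Alg = \Alg$ is what should drive this step.
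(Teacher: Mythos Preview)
You have correctly identified the crux of the matter: the converse half of Proposition~\ref{pro:isomiso} carries the additional hypothesis $g\in\Aut(\Alg,\bullet)$, and nothing in condition~\eqref{simple} by itself removes that hypothesis. The paper offers no proof of the corollary; it is stated as though it followed immediately from Proposition~\ref{pro:isomiso} once $\Alg^{\bullet 2}=\Alg$ collapses $\Aut(\Alg^{\bullet 2},\bullet)$ to $\Aut(\Alg,\bullet)$. That is exactly the reading you sketch in your first paragraph, and at that level your write-up matches the paper's (implicit) argument.

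The problem is that your proposal does not actually close the gap you so clearly isolate. You end by saying that the ``rigidity of principal autotopies under $\Alg\bullet\Alg=\Alg$'' should force an arbitrary isomorphism $g$ between isotopes to be replaceable by some $g'\in\Aut(\Alg,\bullet)$, but you give no mechanism for producing $g'$ from the autotopy data $(g,g,\psi)$, and condition~\eqref{simple} alone does not supply one. Your own parenthetical remark---that the automorphism group of an isotope can strictly contain $\Aut(\Alg,\bullet)$, as indeed happens for $(\Field^n,\bullet_\tau)$ in Section~\ref{sec:automorph}---shows precisely why this step is nontrivial. As written, then, your argument and the paper's share the same lacuna: neither justifies dropping the hypothesis $g\in\Aut(\Alg,\bullet)$ for a general algebra satisfying only~\eqref{simple}.

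For the applications the paper actually makes (unital commutative associative $(\Alg,\bullet)$), the gap can be closed \emph{a posteriori} using the categorical machinery of Section~\ref{sec:categor}: any isomorphism $g:(\Alg,\bullet_h)\to(\Alg,\bullet_f)$ sends $e$ to some $g(e)\in\Idm^\times(\Alg,\bullet_f)$, Proposition~\ref{pro:M} lets one recalibrate so that the isomorphism becomes pointed, and then Theorem~\ref{the:categor} together with Proposition~\ref{pro:A} yields conjugacy of $h$ and $f$. But that argument lies outside the scope of Section~\ref{sec:inner} and does not establish the corollary at the generality stated.
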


\section{Inner isotopes of commutative associative algebras}\label{sec:comm}
So far, we have not specified any additional algebraic structure on $(\Alg,\bullet)$. Below we shall focus on the simplest case when the original algebra $(\Alg, \bullet)$ is commutative and associative. Sometimes we shall also require that the algebra $(\Alg, \bullet)$ is unital. Note that any unital algebra satisfies automatically \eqref{simple}.


If $(\Alg, \bullet)$ is a commutative  associative algebra then any inner isotope $(\Alg, \bullet_h)$ is obviously commutative but it maybe non-associative, because
$$
x\bullet_h  (y\bullet_h  z)=h(x\bullet h(y\bullet z))=h(x)\bullet h^2(y)\bullet h^2(z)
$$
and
$$
(x\bullet_h  y)\bullet_h  z=h^2(x)\bullet h^2(y)\bullet h(z)
$$
are not equal in general. On the other hand, such an inner isotope is nearly associative, namely, it is medial. We recall the definition.

\begin{definition}
An algebra $(\Alg,\bullet)$ is called \textit{medial} if
\begin{equation}\label{medial}
(x\bullet   y)\bullet   (z\bullet   w)=
(x\bullet   z)\bullet   (y\bullet   w), \qquad \forall x,y,z,w\in \Alg.
\end{equation}
\end{definition}

An important corollary of the definition is
$$
(x\bullet   y)\bullet   (x\bullet   y)=
(x\bullet   x)\bullet   (y\bullet   y).
$$
which immediately implies

\begin{proposition}\label{pro:medidm}
Product of two idempotents in a medial algebra is an idempotent again.
\end{proposition}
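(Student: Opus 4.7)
The plan is to derive the statement as an immediate consequence of the auto-medial identity displayed just before the proposition. Let $a,b \in \Alg$ be idempotents, so that $a\bullet a = a$ and $b\bullet b = b$. I want to show $(a\bullet b)\bullet(a\bullet b) = a\bullet b$.

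First I would invoke the medial law \eqref{medial} with the substitution $x=a$, $y=b$, $z=a$, $w=b$ (equivalently, just quote the consequence already displayed immediately above the proposition): this yields
$$
(a\bullet b)\bullet (a\bullet b) = (a\bullet a)\bullet (b\bullet b).
$$
Then I would substitute the idempotency relations $a\bullet a = a$ and $b\bullet b = b$ on the right-hand side to obtain $(a\bullet b)\bullet (a\bullet b) = a\bullet b$, which is precisely the assertion that $a\bullet b \in \Idm(\Alg,\bullet)$.

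There is really no obstacle here; the only subtlety worth flagging is that one must apply mediality with the specific pattern $(z,w)=(x,y)$, not $(z,w)=(y,x)$, so that the squared factors $a\bullet a$ and $b\bullet b$ appear on the right-hand side after the swap. Everything else is a direct substitution, and the proof fits in two lines.
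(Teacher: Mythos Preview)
Your proof is correct and essentially identical to the paper's: both apply the medial identity with $x=z=a$, $y=w=b$ to get $(a\bullet b)\bullet(a\bullet b)=(a\bullet a)\bullet(b\bullet b)$ and then substitute the idempotency relations. Nothing more to add.
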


\begin{proof}
If $c_i\bullet c_i=c_i$, $i=1,2$ then
$$
(c_1\bullet c_2)\bullet(c_1\bullet c_2)=(c_1\bullet c_1)\bullet (c_2\bullet c_2)=c_1\bullet c_2.
$$
\end{proof}

\begin{proposition}\label{pro:medial}
If an algebra $(\Alg,\bullet)$ is commutative and associative, $h\in \End(\Alg,\bullet)$, then the inner isotope $(\Alg, \bullet_h)$ is a commutative medial algebra.
Furthermore, if $h\in \Aut(\Alg,\bullet)$ and $(\Alg,\bullet)$ is additionally a unital algebra with unity $e$ then $e$ is an idempotent in $(\Alg, \bullet_h)$ and $L_{\bullet_h}(e)$ is an invertible operator.

\end{proposition}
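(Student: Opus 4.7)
The plan is to verify each claim by directly unpacking the definition $x\bullet_h y=h(x)\bullet h(y)=h(x\bullet y)$ and exploiting that $\bullet$ is commutative, associative, and that $h$ is a homomorphism.

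\textbf{Commutativity and mediality.} Commutativity of $\bullet_h$ is immediate: $x\bullet_h y=h(x\bullet y)=h(y\bullet x)=y\bullet_h x$. For mediality, I would compute both sides of \eqref{medial} by iterating the definition. Using $h(x\bullet_h y)=h^2(x\bullet y)=h^2(x)\bullet h^2(y)$ (the last equality because $h$ is a homomorphism), one gets
$$
(x\bullet_h y)\bullet_h(z\bullet_h w)=h(x\bullet_h y)\bullet h(z\bullet_h w)=h^2(x)\bullet h^2(y)\bullet h^2(z)\bullet h^2(w),
$$
where the final product is well-defined (no parentheses needed) by associativity of $\bullet$. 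The same computation with the pair $(y,z)$ swapped shows that $(x\bullet_h z)\bullet_h(y\bullet_h w)$ equals the same product rearranged; the two expressions then coincide by commutativity and associativity of $\bullet$. So $(\Alg,\bullet_h)$ is a commutative medial algebra.

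\textbf{The unital case.} Now assume $(\Alg,\bullet)$ has unity $e$ and $h\in\Aut(\Alg,\bullet)$. The key preliminary step is to show $h(e)=e$. Indeed, for any $y\in\Alg$, surjectivity of $h$ lets us write $y=h(x)$, and then $h(e)\bullet y=h(e)\bullet h(x)=h(e\bullet x)=h(x)=y$. By uniqueness of the multiplicative identity in an associative algebra, $h(e)=e$. Consequently $e\bullet_h e=h(e)\bullet h(e)=e\bullet e=e$, so $e$ is an idempotent of $(\Alg,\bullet_h)$.

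\textbf{Invertibility of $L_{\bullet_h}(e)$.} For every $x\in\Alg$ we have
$$
L_{\bullet_h}(e)(x)=e\bullet_h x=h(e)\bullet h(x)=e\bullet h(x)=h(x),
$$
so $L_{\bullet_h}(e)=h$ as linear operators on $\Alg$. Since $h$ is an automorphism it is in particular a bijective linear map, hence $L_{\bullet_h}(e)$ is invertible.

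There is no real obstacle here; the only mildly subtle point is the identification $h(e)=e$, which requires invoking surjectivity of $h$ (so it uses $h\in\Aut$, not merely $h\in\End$) together with uniqueness of the unit in an associative algebra. Everything else reduces to substitution and the commutativity/associativity of the original product.
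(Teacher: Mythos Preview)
Your proof is correct and follows essentially the same approach as the paper: both reduce $(x\bullet_h y)\bullet_h(z\bullet_h w)$ to a $\bullet$-product of $h^2$-images (the paper writes this as $h^2(x\bullet y\bullet z\bullet w)$, you as $h^2(x)\bullet h^2(y)\bullet h^2(z)\bullet h^2(w)$, which is the same thing), and both use $h(e)=e$ together with $e\bullet_h x=h(x)$ to handle the unital case. The only difference is cosmetic: you spell out the argument for $h(e)=e$ and identify $L_{\bullet_h}(e)=h$ explicitly, while the paper simply asserts that automorphisms fix the unit and checks injectivity of $L_{\bullet_h}(e)$ directly.
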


\begin{proof}
We have
\begin{align*}
(x\bullet_h  y)\bullet_h  (z\bullet_h  w)&=h((x\bullet_h  y)\bullet_h  (z\bullet_h  w))\\
&=h(h(x\bullet y)\bullet h(z\bullet w))\\
&=h^2(x\bullet y\bullet z\bullet w),
\end{align*}
where the right hand side is symmetric under any permutation of factors, implying \eqref{medial}.

Next, assume that $(\Alg,\bullet)$ is additionally a unital algebra with unity $e$. Since an automorphism stabilizes the unity element, we have
$$
e\bullet_h e=h(e\bullet e)=h(e)=e
$$
therefore $e$ becomes an idempotent in $(\Alg, \bullet_h)$. Furthermore, $e\bullet_h x=h(e\bullet x)=h(x)=0$ if and only if $x=0$, thus $L_{\bullet_h}(e)$ is an invertible operator.

\end{proof}


\begin{proposition}\label{pro:unital}
Any associative commutative algebra  is medial. A unital commutative medial algebra  is  associative.
\end{proposition}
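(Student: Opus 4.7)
The plan is to handle the two halves separately, both by direct manipulation of the medial identity \eqref{medial}.

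For the forward direction, I would observe that if $(\Alg,\bullet)$ is commutative and associative then both sides of \eqref{medial} can be rewritten without parentheses as $x\bullet y\bullet z\bullet w$ by associativity, and then commutativity lets me permute the factors freely. Hence the two sides coincide, proving mediality. This half is essentially bookkeeping and should be one or two lines.

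For the converse, the key move is to exploit the unit $e$ by feeding it into the inner slot of \eqref{medial} on one side. Specifically, I would substitute $z=e$ in \eqref{medial}:
$$
(x\bullet y)\bullet (e\bullet w)=(x\bullet e)\bullet (y\bullet w).
$$
Since $e$ is a two-sided unit (commutativity makes this automatic once it is one-sided), the left factors $e\bullet w$ and $x\bullet e$ collapse to $w$ and $x$ respectively, yielding
$$
(x\bullet y)\bullet w=x\bullet (y\bullet w),
$$
which is precisely the associativity law upon renaming $w=z$.

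I do not expect a real obstacle here; the only thing to be mindful of is that mediality alone is not enough to derive associativity—the argument genuinely uses the existence of the unit, since the substitution $z=e$ is what destroys the symmetry of the medial identity and converts it into an associativity statement. It may be worth remarking that one cannot drop the unital hypothesis: for instance, an inner isotope $(\Alg,\bullet_h)$ of a commutative associative algebra with $h\ne \mathds{1}_\Alg$ is medial by Proposition~\ref{pro:medial} but typically fails to be associative, as already noted in the computation preceding Definition~\ref{medial} of the excerpt.
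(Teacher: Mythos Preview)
Your proof is correct and essentially the same as the paper's: the forward direction is identical, and for the converse the paper also inserts the unit $e$ into one slot of the medial identity (writing $x\bullet(y\bullet z)=(e\bullet x)\bullet(y\bullet z)$ and then permuting), which is a minor variant of your substitution $z=e$.
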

\begin{proof}
Indeed, if $(\Alg,\bullet)$ is an associative commutative algebra then it is medial:
$$(x\bullet y)\bullet (z\bullet w)=x\bullet y\bullet z\bullet w=x\bullet z\bullet y\bullet w=(x\bullet z)\bullet (y\bullet w).
$$
On the other hand, if $(\Alg,\bullet)$ is a unital commutative medial algebra and $e$ is the algebra unity then
$$
x\bullet (y\bullet z)=(e\bullet x)\bullet (y\bullet z)=(e\bullet z)\bullet (x\bullet y)=z\bullet (x\bullet y)=(x\bullet y)\bullet z,
$$
hence $(\Alg,\bullet)$ is associative.
\end{proof}

We refer to \cite{KrTk23a} for general medial algebras and their spectral theory.

\section{Categories of calibrated medial algebras}\label{sec:categor}


We denote by $\Idm^\times(\Alg,\ast)$ the subset of nonzero idempotents in an algebra $(\Alg,\ast)$ such that the left multiplication operator $L_{\ast}(c)$ is invertible.

\begin{definition}
A  medial algebra $(\Alg,\ast)$ is called \textbf{special} if the set $\Idm^\times(\Alg,\ast)$ is non-empty.
\end{definition}

The terminology `special' here correspond exactly to what is called `medial class (iii) algebras' in \cite{KrTk23a}.

It follows from Proposition~\ref{pro:medial} above that if $(\Alg,\bullet)$ unital commutative associative  algebra then its unity element $e$ of  becomes an idempotent in the inner isotope  $(\Alg,\bullet_h)$, $h\in \Aut(\Alg,\bullet)$ and furthermore  $e\in \Idm^\times(\Alg,\bullet_h)$. This proves

\begin{corollary}\label{cor:med}
Any inner isotope $(\Alg,\bullet_h)$ of a unital commutative associative algebra $(\Alg,\bullet)$ is a special medial algebra.
\end{corollary}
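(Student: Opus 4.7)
The plan is to unpack the definition of a special medial algebra and verify each of its two requirements by directly invoking Proposition~\ref{pro:medial}, which has already done essentially all the work. Recall that $(\Alg,\bullet_h)$ is special medial if (i) it is a medial algebra and (ii) the set $\Idm^\times(\Alg,\bullet_h)$ is non-empty, i.e. there exists a nonzero idempotent $c$ with $L_{\bullet_h}(c)$ invertible.

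For requirement (i), since $(\Alg,\bullet)$ is commutative and associative and $h\in\Aut(\Alg,\bullet)\subset \End(\Alg,\bullet)$, the first part of Proposition~\ref{pro:medial} applies verbatim and yields that $(\Alg,\bullet_h)$ is a commutative medial algebra. No further work is needed here.

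For requirement (ii), I would exhibit the unity element $e$ of $(\Alg,\bullet)$ as a witness. The second part of Proposition~\ref{pro:medial} (which requires precisely that $(\Alg,\bullet)$ be unital and that $h$ be an automorphism, both of which are in force here) states exactly that $e\bullet_h e = e$ and that $L_{\bullet_h}(e)$ is an invertible operator. Consequently $e\in \Idm^\times(\Alg,\bullet_h)$ and so the set is non-empty.

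Combining (i) and (ii) with the definition of a special medial algebra finishes the proof. There is no real obstacle: the corollary is essentially a bookkeeping consequence of Proposition~\ref{pro:medial} together with the definition of \emph{special}, and the only thing to be careful about is to notice that the hypothesis ``$h\in\Aut(\Alg,\bullet)$ and $(\Alg,\bullet)$ unital'' built into the statement of the corollary (through the word ``inner isotope'' of a ``unital commutative associative algebra'') matches exactly the hypotheses of the second half of Proposition~\ref{pro:medial}.
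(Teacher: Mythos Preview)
Your proposal is correct and matches the paper's own argument essentially verbatim: the paper also derives the corollary directly from Proposition~\ref{pro:medial}, noting that the unity $e$ of $(\Alg,\bullet)$ is an idempotent in $(\Alg,\bullet_h)$ with $L_{\bullet_h}(e)$ invertible, hence $e\in\Idm^\times(\Alg,\bullet_h)$.
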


Idempotents in $\Idm^\times(\Alg,\ast)$ are distinguished in many aspects and a particular choice of such an idempotent can be thought of as a \textit{calibration} or \textit{pointing} of an algebra. To put this observation into an appropriate context, we define a concept  of calibrations for special medial algebras and unital commutative associative algebras.
An important ingredient in our constructions is  Kaplansky's trick \cite{Kaplansky}, \cite{Peter2000}.


\begin{definition}
A special (commutative) medial algebra $(\Alg, \ast)$ with a distinguished idempotent $c\in \Idm^\times(\Alg,\ast)$ is called \textit{calibrated} and denoted by $(\Alg, \ast,c)$. An algebra homomorphism  between two special medial algebras $f:(\Alg, \ast,a)\to  (\mathbb{B}, \circledast,b)$  is called a \textit{calibrated} if $f(a)=b$.
\end{definition}


\begin{definition}
A unital commutative associative algebra $(\Alg, \diamond, e)$ with algebra unity $e$ and a distinguished automorphism $h\in \Aut(\Alg, \diamond, e)$ is called \textit{calibrated} and denoted by $(\Alg, \diamond, e,h)$. An algebra homomorphism  between two calibrated commutative associative algebras $f:(\Alg, \diamond, e,h)\to  (\mathbb{A}', \diamondsuit, e',h')$ is called a \textit{calibrated homomorphism} if $h'\circ f=f\circ h$ (note that $f(e)=e'$).
\end{definition}

We denote two calibrated isomorphic algebras  by $\Alg\congp \Alg'$. Of course, $\Alg\congp \Alg'$ implies that $\Alg\cong \Alg'$.
\smallskip

Denote by $\frak{M}$ (respectively by $\mathfrak{A}$) the class of calibrated special commutative medial algebras (respectively  calibrated commutative associative unital algebras). These classes  are categories in an obvious way, where the corresponding morphisms are calibrated homomorphisms.

\begin{theorem}\label{the:categor}
The functor $\Phi:\frak{A}\to \frak{M}$ given by $\Phi(\Alg, \diamond, e, h)=(\Alg, \ast, e)$, where $x\ast y=h(x\diamond y)$ and $e\in \Idm^\times(\Alg,\ast,e)$,
is a category isomorphism. The inverse functor $\Psi=\Phi^{-1}$ is given by $\Psi(\Alg, \ast, c)=(\Alg, \diamond, c,h)$, where $x\diamond y=L_\ast^{-1}(x\ast y)$, $c$ is a unity in $(\Alg, \diamond)$ and $h=L_\ast(c)\in \Aut(\Alg, \diamond)$.
\end{theorem}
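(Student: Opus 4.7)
The plan is to check in order: (i) $\Phi$ is well-defined on objects and morphisms, (ii) $\Psi$ is well-defined on objects and morphisms, and (iii) the compositions $\Phi\circ\Psi$ and $\Psi\circ\Phi$ are identity functors. Since both functors fix the underlying set and the underlying set maps, preservation of composition and identities is automatic once this is established.

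For (i), Proposition~\ref{pro:medial} directly delivers that $\Phi(\Alg,\diamond,e,h) = (\Alg,\ast,e)$ is a calibrated special commutative medial algebra. For a calibrated $\mathfrak{A}$-morphism $f$, the chain
\[
f(x\ast y) = f(h(x\diamond y)) = h'(f(x\diamond y)) = h'(f(x)\diamondsuit f(y)) = f(x)\ast' f(y)
\]
uses only $h'\circ f = f\circ h$ and the $\diamond$-homomorphism property, so $\Phi(f)$ is $\mathfrak{M}$-calibrated.

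The substantive step is (ii), which is Kaplansky's trick adapted to the medial setting. Set $T := L_\ast(c)$, invertible because $c \in \Idm^\times(\Alg,\ast)$, and define $x\diamond y := T^{-1}(x\ast y)$. Commutativity of $\diamond$ is inherited from $\ast$, and $c\diamond x = T^{-1}(Tx) = x$ identifies $c$ as the $\diamond$-unit. The crux is the associativity of $\diamond$, for which I would first establish that $T$ is a $\ast$-automorphism: mediality applied to $(c\ast a)\ast (c\ast b)$ combined with $c\ast c = c$ yields
\[
T(a)\ast T(b) = (c\ast a)\ast (c\ast b) = (c\ast c)\ast (a\ast b) = T(a\ast b).
\]
Applying $T^2$ to both sides of the desired associativity and using this homomorphism property collapses them to
\[
T^2((x\diamond y)\diamond z) = (x\ast y)\ast T(z), \qquad T^2(x\diamond (y\diamond z)) = T(x)\ast (y\ast z),
\]
and these coincide by the direct mediality instance $(x\ast y)\ast (c\ast z) = (x\ast c)\ast (y\ast z)$ together with commutativity $x\ast c = T(x)$. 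Invertibility of $T^2$ then finishes associativity. The property $h := T \in \Aut(\Alg,\diamond)$ follows from $h(x\diamond y) = x\ast y = T^{-1}(T(x)\ast T(y)) = h(x)\diamond h(y)$, and $h$ fixes $c$. On morphisms, a calibration $f(c) = c'$ yields the intertwining $T'\circ f = f\circ T$ via $T'(f(z)) = c'\ast' f(z) = f(c\ast z) = f(Tz)$, and this simultaneously makes $f$ a $\diamond$-homomorphism and gives $f\circ h = h'\circ f$.

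For (iii), the key observation is that in $(\Alg,\bullet_h) = \Phi(\Alg,\bullet,e,h)$ one has $L_{\bullet_h}(e)(z) = h(e\bullet z) = h(z)$, so $L_{\bullet_h}(e) = h$; then $\Psi$ recovers $x\diamond' y = h^{-1}(h(x\bullet y)) = x\bullet y$ and $h' = h$, showing $\Psi\circ\Phi$ is the identity on $\mathfrak{A}$. Dually, $\Phi\circ\Psi$ applied to $(\Alg,\ast,c)$ yields $x\ast' y = T(T^{-1}(x\ast y)) = x\ast y$, which is the identity on $\mathfrak{M}$. The only non-routine step in the entire argument is the mediality-plus-idempotency derivation of associativity of $\diamond$; I expect that to be the main obstacle.
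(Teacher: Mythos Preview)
Your proposal is correct and follows the same three-step architecture as the paper. The one noteworthy difference is how you establish associativity of $\diamond$: you compute $T^2$ of both associators directly and match them via the single mediality instance $(x\ast y)\ast(c\ast z)=(x\ast c)\ast(y\ast z)$, whereas the paper first shows $(\Alg,\diamond)$ is itself medial (using Proposition~\ref{pro:idem} to know $L_\ast(c)^{-1}$ is a $\ast$-endomorphism) and then invokes Proposition~\ref{pro:unital} (unital commutative medial $\Rightarrow$ associative). Your route is a bit more self-contained; the paper's is more modular but relies on a forward reference. You also verify both $\Psi\circ\Phi=\mathrm{id}_{\mathfrak{A}}$ and $\Phi\circ\Psi=\mathrm{id}_{\mathfrak{M}}$ explicitly, while the paper writes out only the first and leaves the second implicit.
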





\begin{proof}
The proof is divided into three steps.

\textbf{Step 1.} The map $\Phi$ is a functor $\mathfrak{A}\to \mathfrak{M}$.

Our first claim is that given $(\Alg, \diamond,e,h)\in \mathfrak{A}$, a new multiplication on $\Alg$ by
\begin{equation}\label{circ0}
x\ast y=h(x\diamond y)=h(x)\diamond h(y)
\end{equation}
defines  a calibrated medial algebra $\Alg,\ast,e)\in \mathfrak{M}$ of class (iii) with an idempotent $e\in \Idm^\times(\Alg,\ast)$. Indeed, since $h\in\Aut(\Alg, \diamond)$ we have
$$
(x\ast y)\ast (z\ast w)=h((x\ast y)\diamond (z\ast w))=
h(h(x\diamond y)\diamond h(z\diamond w))=(h\circ h)(x\diamond y\diamond z\diamond w)
$$
where the right hand side is obviously symmetric for any permutation of factors. Since $e$ is the unitity element and $h$ is an algebra automorphism in $(\Alg,\diamond)$  then $h(e)=e$, hence $e\ast e=h(e\diamond e)=e$, i.e. $e$ is an idempotent in $(\Alg,\ast)$. Also, $e\ast x=h(e\diamond x)=h(x)$, hence $L_{\ast}(e)=h$ is a bijection, i.e. $e\in \Idm^\times(\Alg,h,\ast)$.

Next, we prove that $\Phi$ acts correspondingly on morphisms. To this end consider a calibrated $\frak{A}$-algebra homomorphism  $f:(\Alg, \diamond, e,h)\to  (\mathbb{A}', \diamondsuit, e',h')$. Then $f$ is an algebra homomorphism and $h'\circ f=f\circ h$. Let
$\Phi(\Alg, \diamond, e,h)=(\Alg, \ast, e )$ and $\Phi(\Alg', \diamondsuit, e',h')=(\Alg', \circledast, e' )$. We consider $\Phi(f)=f$. Then
$f(e)=e'$ and
$$
f(x\ast y) =(f\circ h)(x\diamond y)=(h'\circ f)(x \diamond y)=h'(f(x) \diamondsuit f(y))=f(x) \circledast f(y),
$$
hence $f:(\Alg, \ast, e )\to (\Alg', \circledast, e' )$ is a calibrated $\mathfrak{M}$-algebra homomorphism. The fact that $\Phi$ preserves identity morphisms and composition of morphisms trivially follows by its definition.

\textbf{Step 2.} The map $\Psi$ is a functor $\mathfrak{M}\to \mathfrak{A}$.

Let $(\Alg, \ast,c)\in \mathfrak{M}$. Define a new multiplication on $\Alg$ by
\begin{equation}\label{circ}
x\diamond y=L_\ast(c)^{-1}(x\ast y).
\end{equation}
The algebra  $(\Alg,\diamond)$ is commutative and $c\diamond x=L_\ast(c)^{-1}(c\ast x)=L_\ast(c)^{-1}L_\ast(c)x=x$, hence $c$ is a unit in $(\Alg,\diamond)$. By \ref{ideal1} in Proposition~\ref{pro:idem},  $L_\ast(c)^{-1}$ is an algebra isomorphism of $(\Alg, \ast,c)$ hence
$$
(x\diamond y)\diamond (z\diamond w)=L_\ast(c)^{-1}\bigl(L_\ast(c)^{-1}(x\ast y)\ast L_\ast(c)^{-1}(z\ast w)\bigr)=L_\ast(c)^{-2}((x\ast y)\ast(z\ast w))
$$
which implies that $(\Alg,\diamond)$ is medial. By Proposition~\ref{pro:unital}, $(\Alg,\diamond)$ is  associative. Since $L_\ast(c)$ is an algebra isomorphism of $(\Alg, \ast,c)$, it is a bijection. Furthermore,
\begin{align*}
L_\ast(c)(x\diamond y)&=L_\ast(c)L_\ast(c)^{-1}(x\ast y)=x\ast y\\
L_\ast(c)(x)\diamond L_\ast(c)(y)&=L_\ast(c)^{-1}\bigl(L_\ast(c)(x)\ast L_\ast(c)(y)\bigr)=x\ast y,
\end{align*}
hence $L_\ast(c)(x\diamond y)=L_\ast(c)(x)\diamond L_\ast(c)(y)$, i.e. $L_\ast(c)\in \Aut(\Alg,\diamond,c)$. Then  $\Psi(\Alg, \ast,c) =(\Alg,\diamond,c,L_\ast(c))\in \mathfrak{A}$.

Next, we prove that $\Phi$ acts correspondingly on morphisms. Let $f:(\Alg, \ast, c)\to  (\mathbb{A}', \circledast, c')$ be  a calibrated $\frak{M}$-algebra homomorphism, i.e. $f(c)=c'$. Let
$\Psi(\Alg, \ast, c)=(\Alg, \diamond, c,L_\ast(c))$ and $\Psi((\mathbb{A}', \circledast, c')=(\Alg', \diamondsuit, c',L_\circledast(c'))$. Then $f:(\Alg, \diamond)\to (\Alg', \diamondsuit)$ is  vector space homomorphism and
$$
(L_\circledast(c')\circ f)(x)=c'\circledast f(x)=f(c)\circledast f(x)=f(c\ast x)=(f\circ L_\ast(c))(x),
$$
hence $L_\circledast(c')\circ f=f\circ L_\ast(c)$ and therefore $ f\circ L_\ast(c)^{-1}=L_\circledast(c')^{-1}\circ f$, and it follows that
\begin{align*}
f(x\diamond y)&=\bigl(f\circ L_\ast(c)^{-1}\bigr)(x\ast y)=\bigl(L_\circledast(c')^{-1}\circ f\bigr)(x\ast y)\\
&=L_\circledast(c')^{-1}(f(x)\circledast f(y))=f(x)\diamondsuit f(y)
\end{align*}
i.e. $f$ is a calibrated $\frak{A}$-algebra homomorphism. The fact that $\Psi$ preserves identity morphisms and composition of morphisms readily follows by its definition.

\textbf{Step 3.} $ \Psi\circ \Phi=\mathrm{id}_{\mathfrak{A}}.$

We have $\Phi(\Alg, \diamond, e, h)=(\Alg, \ast, e)$ and $\Psi(\Alg, \ast, e)=(\Alg,\diamondsuit,e,L_\ast(e))$, where
\begin{align*}
x\ast y&=h(x\diamond y), \\
x\diamondsuit y&=L_\ast(e)^{-1}(x\ast y).
\end{align*}
We have $e\ast z=h(e\diamond y)=h(e)\diamond h(z)=h(z)$, i.e. $L_\ast(e)=h$, therefore $x\diamondsuit y=L_\ast(e)^{-1}(x\ast y)=h^{-1}(x\ast y)=x\diamond y$. This implies that $\Psi(\Alg, \ast, e)=(\Alg,\diamond,e,h)$, hence $ \Psi\circ \Phi=\mathrm{id}_{\mathfrak{A}}$.
The theorem follows.
\end{proof}

We have several important corollaries of the above result.

\begin{corollary}\label{cor:psiphi}
$\Psi(\Alg)\congp\Psi(\Alg')$ if and only if $\Alg\congp \Alg'$ for $\Alg,\Alg'\in \mathfrak{M}$ and
$\Phi(\Alg)\congp\Phi(\Alg')$ if and only if $\Alg\congp \Alg'$ for $\Alg,\Alg'\in \mathfrak{A}$.
\end{corollary}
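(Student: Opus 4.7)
The plan is to derive the corollary as a purely formal consequence of Theorem~\ref{the:categor}. Since $\Phi$ and $\Psi$ are mutually inverse functors between $\mathfrak{A}$ and $\mathfrak{M}$, the two claimed equivalences are symmetric to each other, and each is an instance of the standard fact that a functor preserves isomorphisms while an inverse functor witnesses the converse.

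First I would verify the companion identity $\Phi\circ\Psi=\mathrm{id}_{\mathfrak{M}}$. The theorem statement claims that $\Phi$ is a category isomorphism, but Step~3 of its proof only explicitly handles $\Psi\circ\Phi=\mathrm{id}_{\mathfrak{A}}$. This missing check is a one-line calculation: starting from $(\Alg,\ast,c)\in\mathfrak{M}$, $\Psi$ produces $(\Alg,\diamond,c,L_\ast(c))$ with $x\diamond y=L_\ast(c)^{-1}(x\ast y)$, and then $\Phi$ turns this back into $(\Alg,\star,c)$ with
$$
x\star y=L_\ast(c)(x\diamond y)=L_\ast(c)(L_\ast(c)^{-1}(x\ast y))=x\ast y,
$$
recovering the original algebra. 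A parallel (trivial) check confirms that $\Phi\circ\Psi$ acts as the identity on morphisms as well.

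With this in hand the two equivalences are immediate. For the first one, suppose $\Alg\congp\Alg'$ in $\mathfrak{M}$, realised by a calibrated isomorphism $g$ with inverse $g^{-1}$. Functoriality of $\Psi$ gives $\Psi(g)\circ\Psi(g^{-1})=\Psi(\mathrm{id})=\mathrm{id}$ and similarly in the other direction, hence $\Psi(g)$ is a calibrated isomorphism witnessing $\Psi(\Alg)\congp\Psi(\Alg')$. Conversely, if $f\colon\Psi(\Alg)\to\Psi(\Alg')$ is a calibrated isomorphism in $\mathfrak{A}$, applying $\Phi$ and using $\Phi\circ\Psi=\mathrm{id}_{\mathfrak{M}}$ produces a calibrated isomorphism $\Phi(f)\colon\Alg\to\Alg'$. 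The second equivalence in the statement follows by interchanging the roles of $\Phi$ and $\Psi$ throughout.

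The only real obstacle is bookkeeping: one must confirm that both functors send a \emph{calibrated} isomorphism to a calibrated isomorphism, i.e.\ that the compatibility condition $f(a)=b$ (resp.\ $h'\circ f=f\circ h$) is preserved under $\Phi$ and $\Psi$. This is already embedded in the construction of the functors in the proof of Theorem~\ref{the:categor}, so no new argument is needed beyond invoking the theorem and the functoriality of $\Phi$ and $\Psi$.
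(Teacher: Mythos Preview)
Your argument is correct and matches the paper's intent: the corollary is stated without proof, as an immediate consequence of the category isomorphism in Theorem~\ref{the:categor}, and your functorial derivation is exactly the standard unpacking of that claim. Your explicit verification of $\Phi\circ\Psi=\mathrm{id}_{\mathfrak{M}}$ is a welcome addition, since the paper's Step~3 indeed only checks the other composite.
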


The following propositions describe how the functors $\Phi:\mathfrak{A}\to \mathfrak{M}$ and $\Psi:\mathfrak{M}\to \mathfrak{A}$ depend on a particular choice of calibrating.


\begin{proposition}\label{pro:M}
If $(\Alg,\ast)$ is a special commutative medial algebra and $c_1,c_2\in \Idm^\times(\Alg,\ast)$ then  $(\Alg, \ast, c_1)\congp (\Alg, \ast, c_2)$. In particular, given a special commutative medial algebra, there exists a unique calibrated isomorphy class of $\Alg$.
\end{proposition}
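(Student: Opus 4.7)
The plan is to build an explicit $\ast$-automorphism $F$ of $\Alg$ satisfying $F(c_1)=c_2$; such an $F$ is, by definition, a calibrated $\mathfrak{M}$-isomorphism $(\Alg,\ast,c_1)\to (\Alg,\ast,c_2)$.

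The key input I will use is that in \emph{any} commutative medial algebra, for every idempotent $c$ the left multiplication $L_\ast(c)$ is itself an $\ast$-endomorphism; this is a one-line consequence of mediality applied to $(c\ast c)\ast (x\ast y)=(c\ast x)\ast(c\ast y)$ and is precisely item \ref{ideal1} of Proposition~\ref{pro:idem} invoked already in the proof of Theorem~\ref{the:categor}. Hence, when $c\in\Idm^\times(\Alg,\ast)$, the map $L_\ast(c)$ is an invertible endomorphism of $(\Alg,\ast)$, i.e.\ an element of $\Aut(\Alg,\ast)$. Applying this to both $c_1$ and $c_2$, the composition
$$
F:= L_\ast(c_1)^{-1}\circ L_\ast(c_2)
$$
is an $\ast$-automorphism.

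To verify $F(c_1)=c_2$, I simply compute
$$
F(c_1)= L_\ast(c_1)^{-1}(c_2\ast c_1)= L_\ast(c_1)^{-1}(c_1\ast c_2)= L_\ast(c_1)^{-1}\bigl(L_\ast(c_1)(c_2)\bigr)=c_2,
$$
using commutativity. This gives the desired calibrated isomorphism. For the ``in particular'' clause, I note that $\Idm^\times(\Alg,\ast)\neq \emptyset$ by the definition of special, so the first part shows that the calibrated isomorphism class $[(\Alg,\ast,c)]_{\congp}$ is independent of the chosen $c\in\Idm^\times(\Alg,\ast)$.

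The ``main obstacle'' is essentially cosmetic: one must make sure the referenced fact that $L_\ast(c)\in\Aut(\Alg,\ast)$ for $c\in\Idm^\times$ is already on record at this point of the paper; if not, a half-line derivation from \eqref{medial} can be inserted. A completely equivalent --- and more conceptual --- route, worth mentioning as a sanity check, is via the categorical equivalence: by Corollary~\ref{cor:psiphi} it suffices to prove $\Psi(\Alg,\ast,c_1)\congp \Psi(\Alg,\ast,c_2)$ in $\mathfrak{A}$; since $c_2\in \Idm^\times(\Alg,\ast)$ translates into $c_2$ being a $\diamond_1$-invertible element of $(\Alg,\diamond_1,c_1)$, the $\diamond_1$-translation $L_{\diamond_1}(c_2)$ furnishes the required calibrated $\mathfrak{A}$-isomorphism, and pulling it back through $\Phi$ yields exactly the map $F$ above.
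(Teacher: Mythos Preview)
Your proof is correct and is essentially identical to the paper's own argument: the paper also sets $f=L_\ast(c_1)^{-1}L_\ast(c_2)$, invokes item~\ref{ideal1} of Proposition~\ref{pro:idem} to see it is a $\ast$-automorphism, and then checks $f(c_1)=c_2$ by the same one-line computation. Your added categorical sanity check is a nice bonus but not needed.
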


\begin{proof}
By \ref{ideal1} in Proposition~\ref{pro:idem}, $f=L_\ast(c_1)^{-1}L_\ast(c_2)$ is a $\ast$-algebra automorphism of $(\Alg,\ast)$. Furthermore,
$
f(c_1)=L_\ast(c_1)^{-1}L_\ast(c_2)(c_1)=L_\ast(c_1)^{-1}(c_1\ast c_2)=c_2,
$
hence $f$ a calibrated isomorphism of $f:(\Alg, \ast, c_1)\to (\Alg, \ast, c_2)$.
\end{proof}

\begin{proposition}\label{pro:A}
Let $(\Alg,\diamond)$ be a unital commutative associative algebra, $h_1,h_2\in \Aut(\Alg,\diamond)$. Then $(\Alg,\diamond,e,h_1)\congp (\Alg,\diamond,e,h_2)$ if and only if  $h_1$ and $h_2$ are conjugate in $\Aut(\Alg,\diamond)$.
\end{proposition}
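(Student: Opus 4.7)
The proof is essentially unpacking the definition of a calibrated morphism in $\mathfrak{A}$. My plan is to show both directions by translating the conjugation relation into the calibration condition $h' \circ f = f \circ h$.

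First I would recall that a calibrated isomorphism between $(\Alg,\diamond,e,h_1)$ and $(\Alg,\diamond,e,h_2)$ is, by definition, an algebra isomorphism $g \in \Aut(\Alg,\diamond)$ satisfying $h_2 \circ g = g \circ h_1$ (the condition $g(e)=e$ is automatic, since any algebra isomorphism between unital commutative associative algebras sends the unity to the unity). Rewriting the commutation identity as $h_2 = g \circ h_1 \circ g^{-1}$ shows that it is literally the statement that $h_1$ and $h_2$ are conjugate inside the group $\Aut(\Alg,\diamond)$.

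For the forward direction, assume $(\Alg,\diamond,e,h_1)\congp (\Alg,\diamond,e,h_2)$; take the witnessing $g\in\Aut(\Alg,\diamond)$, and then $h_2 = g\circ h_1\circ g^{-1}$, as needed. For the converse, assume $h_2 = g\circ h_1\circ g^{-1}$ for some $g\in\Aut(\Alg,\diamond)$; then $g$ preserves the unity $e$ and satisfies $h_2\circ g = g\circ h_1$, so by definition $g:(\Alg,\diamond,e,h_1)\to(\Alg,\diamond,e,h_2)$ is a calibrated $\mathfrak{A}$-isomorphism.

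There is no genuine obstacle here — the statement is essentially a reformulation of the definitions. The only care needed is to verify that $g(e)=e$ is automatic (which follows because $e$ is the unique unity of $(\Alg,\diamond)$ and $g$ is an algebra automorphism), so that the calibration condition reduces cleanly to the commutation identity defining conjugacy.
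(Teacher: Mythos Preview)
Your proposal is correct and follows exactly the same approach as the paper: both arguments simply unpack the definition of a calibrated $\mathfrak{A}$-isomorphism to see that the condition $h_2\circ g=g\circ h_1$ with $g\in\Aut(\Alg,\diamond)$ is precisely conjugacy. Your added remark that $g(e)=e$ is automatic is a harmless clarification the paper leaves implicit.
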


\begin{proof}
By the definition, $(\Alg,\diamond,e,h_1)\congp (\Alg,\diamond,e,h_2)$ if and only if there exists an isomorphism $f:(\Alg,\diamond)\to (\Alg,\diamond)$ such that $h_2\circ f=f\circ h_1$, i.e. $f\in \Aut(\Alg,\diamond)$  and $h_2=f\circ h_1\circ f^{-1}$, which is equivalent to that  $h_1$ and $h_2$ are conjugate in $\Aut(\Alg,\diamond)$.
\end{proof}

\begin{corollary}\label{cor:poin}
Given a unital commutative associative  algebra $(\Alg,\diamond)$, there is a natural bijection between its calibrated isomorphy classes  and the conjugacy classes of its automorphism group:
$$
\Alg/_{\congp} = \Aut(\Alg)/_{\mathrm{conj}}
$$
\end{corollary}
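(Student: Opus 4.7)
The plan is to recognize that this corollary is essentially a direct repackaging of Proposition~\ref{pro:A}. The only real work is bookkeeping: specifying the map between the two sets and checking that injectivity plus surjectivity follow immediately from the proposition.

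First I would define the natural map
\[
\Theta:\Aut(\Alg,\diamond)\longrightarrow \Alg/_{\congp},\qquad \Theta(h)=[(\Alg,\diamond,e,h)],
\]
where $e$ is the fixed algebra unit of $(\Alg,\diamond)$. This is well defined because, given the fixed underlying commutative associative unital data $(\Alg,\diamond,e)$, a calibrated structure on it is determined by a single choice of $h\in\Aut(\Alg,\diamond)$. By the same token $\Theta$ is surjective: every calibrated isomorphy class in $\Alg/_{\congp}$ contains a representative of the form $(\Alg,\diamond,e,h)$ for some $h$, simply by unfolding the definition of a calibrated commutative associative algebra.

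Next I would apply Proposition~\ref{pro:A}, which states precisely that $(\Alg,\diamond,e,h_1)\congp(\Alg,\diamond,e,h_2)$ if and only if $h_1$ and $h_2$ are conjugate in $\Aut(\Alg,\diamond)$. Consequently $\Theta(h_1)=\Theta(h_2)$ iff $h_1\sim h_2$ in the conjugation sense, so $\Theta$ descends to a well-defined map
\[
\overline{\Theta}:\Aut(\Alg,\diamond)/_{\mathrm{conj}}\longrightarrow \Alg/_{\congp}
\]
which is both injective (by the "only if" direction of Proposition~\ref{pro:A}) and surjective (by the surjectivity of $\Theta$ noted above). This is the claimed bijection.

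There is no genuine obstacle here since the entire content has already been carried out in Proposition~\ref{pro:A}; the only point requiring mild care is clarifying what is meant by \emph{calibrated isomorphy classes of $(\Alg,\diamond)$} on the left-hand side, namely that one fixes the pair $(\Alg,\diamond)$ (equivalently, fixes $e$) and varies only the distinguished automorphism $h$. Once this convention is made explicit, the bijection is immediate and the naturality asserted in the statement amounts to the observation that $\Theta$ is defined using no auxiliary choices beyond the algebra itself.
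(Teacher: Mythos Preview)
Your proposal is correct and matches the paper's approach: the corollary is stated in the paper without proof, as an immediate consequence of Proposition~\ref{pro:A}, and your argument simply spells out that consequence by defining the natural map $h\mapsto [(\Alg,\diamond,e,h)]$ and invoking the proposition for well-definedness and injectivity.
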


\section{Idempotents in inner isotopes}\label{sec:idem}
We start with a general result which holds for any commutative medial algebra.
\begin{proposition}[\cite{KrTk23a}]
\label{pro:idem}
Let $(\Alg,\ast)$ be commutative medial algebra.
If $c_1,c_2\in \Idm(\Alg, \ast)$  then so is $c_1\ast c_2$. In other words, the set of all idempotents $\Idm(\Alg, \ast)\cup \{0\}$ is a multiplicative magma.
Furthermore, for any idempotent $c\in \Idm(\Alg, \ast)$:
\begin{enumerate}
\item\label{ideal1}
$L_{\ast}(c)$ is a $(\Alg, \ast)$-algebra endomorphism;
\item\label{ideal2} The $0$-Peirce subspace $\ker L_{\ast}(c)$ is an ideal of $(\Alg, \ast)$ and the image $L_{\ast}(c)(\Alg)$ is a subalgebra of $\Alg$;
\item\label{ideal4} The $1$-Peirce subspace $\{x\in \Alg:L_{\ast}(c)x=x\}$ is a  subalgebra of the image $L_{\ast}(c)(\Alg)$ and $\dim \Alg_c(1)\ge 1$;
\item\label{ideal5} For any idempotents $c_1, c_2\in \Idm(\Alg)$ the following composition rule holds:
\begin{equation}\label{c1c2}
L_{\ast}(c_2)L_{\ast}(c_1)=L_{\ast}(c_1\ast c_2)L_{\ast}(c_2)
\end{equation}
\end{enumerate}
\end{proposition}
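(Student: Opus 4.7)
The plan is to derive every item from a single polarization of the medial identity together with the idempotency $c\ast c=c$. Specifically, inserting $x=y=c$ in $(x\ast y)\ast(z\ast w)=(x\ast z)\ast(y\ast w)$ and collapsing $c\ast c=c$ gives the master identity
$$
c\ast(z\ast w)=(c\ast z)\ast(c\ast w),
$$
which is precisely the statement that $L_\ast(c)$ is an algebra endomorphism. The first assertion of the proposition, that products of idempotents are again idempotents, has already been established as Proposition~\ref{pro:medidm}, so only items \ref{ideal1}--\ref{ideal5} remain, and they will all turn out to be direct consequences of this master identity.

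Item \ref{ideal1} is exactly the master identity just noted. For \ref{ideal2}, I would verify directly that $\ker L_\ast(c)$ is an ideal: if $c\ast x=0$ and $y\in\Alg$, the master identity yields $c\ast(x\ast y)=(c\ast x)\ast(c\ast y)=0$, so $x\ast y\in\ker L_\ast(c)$. That $L_\ast(c)(\Alg)$ is a subalgebra is immediate from \ref{ideal1}, since the image of any algebra endomorphism is a subalgebra.

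For \ref{ideal4}, the $1$-Peirce subspace is contained in $L_\ast(c)(\Alg)$ because every $x$ with $c\ast x=x$ is visibly in the image of $L_\ast(c)$. Closure under $\ast$ follows from the master identity: if $c\ast x=x$ and $c\ast y=y$, then $c\ast(x\ast y)=(c\ast x)\ast(c\ast y)=x\ast y$. Finally, $c$ itself lies in this subspace because $c\ast c=c$, and $c\neq 0$ by assumption, so $\dim\Alg_1(c)\ge 1$.

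For \ref{ideal5}, I would apply the medial identity to $c_2\ast(c_1\ast x)$ after inserting $c_2\ast c_2=c_2$, and then invoke commutativity:
$$
L_\ast(c_2)L_\ast(c_1)x=(c_2\ast c_2)\ast(c_1\ast x)=(c_2\ast c_1)\ast(c_2\ast x)=L_\ast(c_1\ast c_2)L_\ast(c_2)x.
$$
There is no genuine obstacle here; the only care needed is in choosing the substitutions into the medial identity so that the operators line up on the correct sides, which is exactly where commutativity of $\ast$ is essential.
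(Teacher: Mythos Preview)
Your proof is correct and follows essentially the same route as the paper's own argument: both derive the endomorphism property of $L_\ast(c)$ by collapsing $c\ast c=c$ inside the medial identity, deduce items \ref{ideal2} and \ref{ideal4} from standard facts about kernels, images, and fixed points of algebra endomorphisms, and prove \eqref{c1c2} by the identical substitution $c_2=c_2\ast c_2$ in the medial law.
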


\begin{proof}
The first claim is an immediate corollary of the medial magma identity \eqref{medial}. Furthermore, the multiplication operator $L_{\ast}(c):\Alg\to \Alg$ is linear and it follows from  \eqref{medial} that for any idempotent $c\in \Idm(\Alg,\ast)$
\begin{align*}
L_{\ast}(c)(x\ast y)&=c\ast(x\ast y)=(c\ast c)\ast(x\ast y)=(c\ast x)\ast(c\ast y)\\
&=L_{\ast}(c)x\,\ast L_{\ast}(c)y,
\end{align*}
hence $L_{\ast}(c)$ is an algebra endomorphism. As the kernel of a homomorphism, $\Alg_c(0)=\ker L_{\ast}(c)$ is an ideal and as the image of a homomorphism, $L_{\ast}(c)(\Alg)$ is a subalgebra. Further, $\Alg_c(1)=\{x:L_{\ast}(c)x=x\}$ is the set of fixed points of the algebra homomorphism $L_{\ast}(c)$, thus it is a subalgebra of $\Alg$. Since $L_{\ast}(c)$ stabilizes $\Alg_c(1)$, the latter is also a subalgebra of $L_{\ast}(c)(\Alg)$. Also, the one-dimensional subspace $
\Span{c}\subset L_{\ast}(c)(\Alg),$ hence $\dim \Alg_c(1)\ge 1$. Finally, \eqref{c1c2} follows from
\begin{align*}
L_{\ast}(c_2)L_{\ast}(c_1)x&=c_2\ast (c_1\ast x)
=(c_2\ast c_2)(c_1\ast x)
=(c_2\ast c_1)(c_2\ast x)\\
&=L_{\ast}(c_1\ast c_2)L_{\ast}(c_2)x.
\end{align*}
\end{proof}

Now let $(\Alg, \bullet)$ be a commutative algebra and $(\Alg, \bullet_h )$ its inner isotope, $h\in \Aut(\Alg, \bullet)$. Then $c$ is an idempotent in $(\Alg, \bullet_h )$ if $c=c\bullet_h  c$, which implies
\begin{equation}\label{iter1}
\Idm(\Alg,\bullet_h )\cup\{0\}=\{c\in \Alg: h(c\bullet c)=c\}.
\end{equation}

Combining Proposition~\ref{pro:medial} and Proposition~\ref{pro:idem} we obtain

\begin{corollary}\label{cor:mult}
Let $(\Alg,\bullet)$ be commutative associative algebra.
The set of all idempotents  $\Idm(\Alg, \bullet_h)\cup\{0\}$ is a multiplicative magma.
\end{corollary}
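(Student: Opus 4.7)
The plan is to derive the corollary as a direct composition of the two propositions just invoked, without any new computation. By hypothesis $(\Alg,\bullet)$ is commutative and associative, and $h$ is an endomorphism (in fact, in the inner-isotope case, an automorphism) of $(\Alg,\bullet)$. Proposition~\ref{pro:medial} therefore applies and tells us that the inner isotope $(\Alg,\bullet_h)$ is a commutative medial algebra. So the task is reduced to a purely medial statement about $(\Alg,\bullet_h)$.

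Next, I would simply feed $(\Alg,\bullet_h)$ into Proposition~\ref{pro:idem}. Its first assertion says that in any commutative medial algebra the product of two idempotents is an idempotent, i.e.\ $\Idm(\Alg,\bullet_h)\cup\{0\}$ is closed under $\bullet_h$. This is exactly the multiplicative-magma conclusion asked for in Corollary~\ref{cor:mult}, so the proof terminates.

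For completeness (or as an alternative proof not appealing to Proposition~\ref{pro:idem}) one could verify closure by the direct one-line calculation based on \eqref{iter1}: if $c_1,c_2\in\Idm(\Alg,\bullet_h)$, then $h(c_i\bullet c_i)=c_i$, and using that $h$ is a $\bullet$-endomorphism together with commutativity and associativity of $\bullet$,
\begin{align*}
(c_1\bullet_h c_2)\bullet_h(c_1\bullet_h c_2)
&= h\bigl(h(c_1\bullet c_2)\bullet h(c_1\bullet c_2)\bigr)
 = h\bigl(h((c_1\bullet c_1)\bullet(c_2\bullet c_2))\bigr)\\
&= h\bigl(h(c_1\bullet c_1)\bullet h(c_2\bullet c_2)\bigr)
 = h(c_1\bullet c_2) = c_1\bullet_h c_2,
\end{align*}
so $c_1\bullet_h c_2\in\Idm(\Alg,\bullet_h)\cup\{0\}$.

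There is no real obstacle here: both ingredients are already proved in the paper, and the statement is essentially a specialization of Proposition~\ref{pro:idem}(first claim) to the medial algebras arising from Proposition~\ref{pro:medial}. The only tiny point to keep straight is that Proposition~\ref{pro:medial} requires no more than $h\in\End(\Alg,\bullet)$, so the conclusion holds verbatim for weak inner isotopes as well, not just inner isotopes in the automorphism sense.
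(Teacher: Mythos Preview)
Your argument is correct and matches the paper's approach exactly: the corollary is obtained by combining Proposition~\ref{pro:medial} (the inner isotope is commutative medial) with the first claim of Proposition~\ref{pro:idem} (in a medial algebra the idempotents together with $0$ form a multiplicative magma). The additional direct verification you give via \eqref{iter1} is also fine but not needed.
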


The latter makes it natural to ask when the set of all \textit{nonzero} idempotents $\Idm(\Alg, \bullet_h)$ is a quasigroup. Note that this property does not hold in general because the product of two idempotents may be zero. But, under some natural assumptions, one has the desired property.

\begin{corollary}\label{cor:divis}
If $(\Alg, \bullet)$ is a commutative  associative division algebra and $h\in \Aut(\Alg, \bullet)$ then $(\Alg, \bullet_h)$ is also a division algebra and  the set of nonzero idempotents $\Idm(\Alg, \bullet_h)$ is a commutative idempotent medial quasigroup.
\end{corollary}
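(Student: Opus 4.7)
The plan is to verify three things in turn: that $(\Alg,\bullet_h)$ itself is a division algebra; that $\Idm(\Alg,\bullet_h)$ is closed under $\bullet_h$ with the required commutative, idempotent, medial structure; and finally that this structure forms a quasigroup.

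For the division algebra property I would use the identity
\[
L_{\bullet_h}(a) = L_\bullet(h(a))\circ h,
\]
which follows directly from $a\bullet_h x=h(a)\bullet h(x)$. Since $h\in\Aut(\Alg,\bullet)$ is bijective and, for $a\ne 0$, $h(a)\ne 0$ so that $L_\bullet(h(a))$ is invertible by the division algebra hypothesis on $\bullet$, the operator $L_{\bullet_h}(a)$ is invertible for every nonzero $a$. Hence $(\Alg,\bullet_h)$ is a division algebra and in particular has no zero divisors.

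Commutativity of $\bullet_h$ is inherited from $\bullet$, idempotency of every element of $\Idm(\Alg,\bullet_h)$ holds by definition, and Proposition~\ref{pro:medial} gives the medial law throughout $(\Alg,\bullet_h)$, hence in particular on $\Idm(\Alg,\bullet_h)$. By Proposition~\ref{pro:medidm}, the product of two idempotents is an idempotent, and by the absence of zero divisors such a product of nonzero elements is nonzero, so $\Idm(\Alg,\bullet_h)$ is closed under $\bullet_h$. For the quasigroup property, fix $a,b\in\Idm(\Alg,\bullet_h)$; the equation $a\bullet_h x=b$ rewrites as $a\bullet x=h^{-1}(b)$, which in the field $(\Alg,\bullet)$ admits the unique solution $x=a^{-1}\bullet h^{-1}(b)$. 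Uniqueness of $x$ in $\Alg$ is automatic; it remains to see that $x\in\Idm(\Alg,\bullet_h)$. Here the $\bullet_h$-idempotency of $a$ and $b$ translates to $a\bullet a=h^{-1}(a)$ and $b\bullet b=h^{-1}(b)$, which yields $a^{-2}=h^{-1}(a^{-1})$, and a direct calculation gives
\[
x\bullet x=a^{-2}\bullet h^{-1}(b\bullet b)=a^{-2}\bullet h^{-2}(b)=h^{-1}(a^{-1})\bullet h^{-2}(b)=h^{-1}(x),
\]
i.e., $h(x\bullet x)=x$, which is exactly $x\bullet_h x=x$. Commutativity of $\bullet_h$ then handles right division as well.

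The main obstacle is this last step: recognising that the very conditions encoding the $\bullet_h$-idempotency of $a$ and $b$ are precisely what forces the unique solution $x$ of $a\bullet_h x=b$ to be itself a $\bullet_h$-idempotent. Once this identity is verified, the remaining quasigroup, idempotency, commutativity, and mediality requirements are immediate consequences of the preceding results.
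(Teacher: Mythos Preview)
Your argument is correct. The division-algebra step via $L_{\bullet_h}(a)=L_\bullet(h(a))\circ h$ is clean, closure under $\bullet_h$ follows as you say from Propositions~\ref{pro:medial} and~\ref{pro:medidm} together with the absence of zero divisors, and the computation verifying that $x=a^{-1}\bullet h^{-1}(b)$ is a $\bullet_h$-idempotent is accurate (the key identity $a^{-2}=h^{-1}(a^{-1})$ is exactly the inverse of $a^{2}=h^{-1}(a)$, using that $h^{-1}$ is a field automorphism).

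Compared with the paper, your route for the quasigroup part is genuinely different and, in a precise sense, more complete. The paper argues only cancellation: from $c_1\bullet_h c_2=c_1\bullet_h c_3$ it deduces $c_1\bullet_h(c_2-c_3)=0$ and hence $c_2=c_3$. This gives uniqueness of solutions to $a\bullet_h x=b$ but does not, on its own, establish \emph{existence} of such a solution inside $\Idm(\Alg,\bullet_h)$; without finiteness of the idempotent set or an extra argument, cancellation alone does not imply the Latin square property. Your explicit construction of $x$ and verification that $x\bullet_h x=x$ closes that gap. The trade-off is that your argument uses the field structure of $(\Alg,\bullet)$ (inverses, the automorphism property of $h^{-1}$) more heavily, whereas the paper's cancellation step is formally lighter; but the paper's brevity is bought at the cost of leaving the existence half of the quasigroup axiom implicit.
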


\begin{proof}
Indeed,  given $x,y\in (\Alg, \bullet_h)$,  $x\bullet_h y=0$ if and only if $h(x\bullet y)=0$, where the latter by the bijectivity of $h$ is equivalent to $x\bullet y=0$, therefore $(\Alg, \bullet_h)$ is also a division algebra. This implies by Proposition~\ref{pro:idem} that for any $c_1,c_2\in \Idm(\Alg, \bullet_h)$ in fact $c_1\bullet_h c_2\in \Idm(\Alg, \bullet_h)$. Suppose that $c_1\bullet_h c_2=c_1\bullet_h c_3$ for some $c_1,c_2,c_3\in \Idm(\Alg, \bullet_h)$. Then $c_1\bullet_h (c_2- c_3)=0$, therefore $c_2- c_3=0$, hence $\Idm(\Alg, \bullet_h)$ is in fact a quasigroup, which is obviously commutative idempotent and medial, the proposition follows.
\end{proof}

\begin{proposition}\label{pro:quasi}
Let $(\Alg, \bullet)$ be a commutative  associative division algebra and $h\in \Aut(\Alg, \bullet)$. Then all idempotents $c\in \Idm(\Alg, \bullet_h)$ have the same characteristic polynomial.
\end{proposition}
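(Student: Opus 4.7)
The approach is to show that for any two idempotents $c_1,c_2\in\Idm(\Alg,\bullet_h)$, the operators $L_{\bullet_h}(c_1)$ and $L_{\bullet_h}(c_2)$ are conjugate (similar) in the algebra of linear operators on $\Alg$; this immediately yields equality of characteristic polynomials. The main tool is the composition rule \eqref{c1c2} from Proposition~\ref{pro:idem} applied to the medial algebra $(\Alg,\bullet_h)$, combined with the quasigroup structure of $\Idm(\Alg,\bullet_h)$ provided by Corollary~\ref{cor:divis}.

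First I would record the key invertibility fact: since $(\Alg,\bullet)$ is a division algebra, so is $(\Alg,\bullet_h)$ by Corollary~\ref{cor:divis}, and therefore $L_{\bullet_h}(c)$ is a bijection for every nonzero $c\in\Alg$, in particular for every idempotent. Next, by Proposition~\ref{pro:medial}, the isotope $(\Alg,\bullet_h)$ is a commutative medial algebra, so the composition rule \eqref{c1c2} of Proposition~\ref{pro:idem} applies:
\begin{equation*}
L_{\bullet_h}(c_2)\,L_{\bullet_h}(c_1)=L_{\bullet_h}(c_1\bullet_h c_2)\,L_{\bullet_h}(c_2)
\end{equation*}
for any idempotents $c_1,c_2\in\Idm(\Alg,\bullet_h)$. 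Multiplying on the right by $L_{\bullet_h}(c_2)^{-1}$ (permissible by the previous step), this reads
\begin{equation*}
L_{\bullet_h}(c_1\bullet_h c_2)=L_{\bullet_h}(c_2)\,L_{\bullet_h}(c_1)\,L_{\bullet_h}(c_2)^{-1},
\end{equation*}
so $L_{\bullet_h}(c_1)$ and $L_{\bullet_h}(c_1\bullet_h c_2)$ are conjugate.

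Finally, to pass from $c_1$ to an arbitrary second idempotent, I would use the quasigroup property of $\Idm(\Alg,\bullet_h)$ from Corollary~\ref{cor:divis}: for any $a,b\in\Idm(\Alg,\bullet_h)$ there exists $c\in\Idm(\Alg,\bullet_h)$ with $a\bullet_h c=b$. Applying the conjugation identity above to $c_1:=a$ and $c_2:=c$ yields that $L_{\bullet_h}(a)$ is conjugate to $L_{\bullet_h}(b)$, hence the two operators share the same characteristic polynomial. The main obstacle I foresee is purely bookkeeping: making sure that the existence of the solution $c$ in the quasigroup equation $a\bullet_h c=b$ indeed lies within $\Idm(\Alg,\bullet_h)$, which is exactly what Corollary~\ref{cor:divis} guarantees once one notes that $(\Alg,\bullet_h)$ inherits the division property from $(\Alg,\bullet)$ through the bijectivity of $h$.
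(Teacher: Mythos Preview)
Your argument is correct and follows essentially the same route as the paper: both use Proposition~\ref{pro:medial} to get mediality, the composition rule \eqref{c1c2} from Proposition~\ref{pro:idem}, and invertibility of $L_{\bullet_h}(c)$ coming from the division property (Corollary~\ref{cor:divis}) to obtain a conjugacy of multiplication operators. The only difference is in the final step: the paper applies \eqref{c1c2} twice with the roles of $c_1,c_2$ swapped and then invokes commutativity $c_1\bullet_h c_2=c_2\bullet_h c_1$ to see that both $L_{\bullet_h}(c_1)$ and $L_{\bullet_h}(c_2)$ are conjugate to $L_{\bullet_h}(c_1\bullet_h c_2)$, whereas you solve the quasigroup equation $a\bullet_h c=b$ to reach an arbitrary target; both variants are valid and of comparable length.
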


\begin{proof}
By Corollary~\ref{cor:divis}, if $c_1,c_2\in \Idm(\Alg, \bullet_h)$ then $c_1\bullet_h c_2\in \Idm(\Alg, \bullet_h)$ and $L_{\bullet_h}(c_1)$ is invertible, hence   by  Proposition~\ref{pro:idem} we obtain
$$
L_{\bullet_h}(c_1\bullet_h c_2)=L_{\bullet_h}(c_1)L_{\bullet_h}(c_2)L_{\bullet_h}(c_1)^{-1}.
$$
and similarly
$$
L_{\bullet_h}(c_2\bullet_h c_1)=L_{\bullet_h}(c_2)L_{\bullet_h}(c_1)L_{\bullet_h}(c_2)^{-1}.
$$
Since $c_1\bullet_h c_2=c_2\bullet_h c_1$ we obtain from the last two relations that the characteristic polynomials of $L_{\bullet_h}(c_2)$ and $L_{\bullet_h}(c_1)$ are equal.
The proposition follows.
\end{proof}

\begin{proposition}
Let $(\Alg, \bullet)$ be a  unital commutative  associative division algebra with unity $e$ and let $h\in \Aut(\Alg, \bullet)$ be an automorphism of finite order $d$. Then $c^{\bullet (2^d-1)}=e$ for any $c\in \Idm(\Alg,\bullet_h )$.
\end{proposition}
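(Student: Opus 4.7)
The plan is to translate the $\bullet_h$-idempotence of $c$ into a simple relation between iterated $\bullet$-powers of $c$, and then exploit that the finite order of $h$ forces this iteration to loop. From $c\bullet_h c=c$ together with the definition $x\bullet_h y=h(x\bullet y)$, one immediately reads off $h(c\bullet c)=c$, equivalently
\begin{equation*}
c\bullet c=h^{-1}(c).
\end{equation*}

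The heart of the argument is a doubling induction: I would prove $c^{\bullet 2^k}=h^{-k}(c)$ for every $k\ge 0$. The base case $k=0$ is trivial, and for the inductive step one splits $c^{\bullet 2^{k+1}}=c^{\bullet 2^k}\bullet c^{\bullet 2^k}$ (using that $\bullet$ is commutative and associative), substitutes the inductive hypothesis, and uses that $h^{-k}\in\Aut(\Alg,\bullet)$ to turn the product of images into the image of $c\bullet c=h^{-1}(c)$. Setting $k=d$ and invoking $h^d=\mathrm{id}_\Alg$ yields $c^{\bullet 2^d}=c$.

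To finish, I need only cancel one factor of $c$. Since $c\in\Idm(\Alg,\bullet_h)$ is nonzero and $(\Alg,\bullet)$ is a unital commutative associative division algebra, $c$ admits a $\bullet$-inverse; multiplying the identity $c^{\bullet 2^d}=c$ by this inverse gives $c^{\bullet(2^d-1)}=e$, as claimed.

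I do not anticipate any real obstacle here: the whole argument rests on the single observation that, on the idempotent $c$, taking $\bullet_h$-square-roots amounts to applying $h^{-1}$, so doubling the exponent cycles through $\langle h\rangle$. The one place where care is needed is cancellation of $c$ in the last line, and that is precisely what the division algebra hypothesis is there to guarantee.
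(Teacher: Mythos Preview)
Your proof is correct and is essentially the same as the paper's: both iterate the relation $h(c^{\bullet 2})=c$ (equivalently $c^{\bullet 2}=h^{-1}(c)$) to obtain $c^{\bullet 2^d}=c$, and then cancel the nonzero $c$ using the division algebra hypothesis. The only cosmetic difference is that the paper writes the iteration as $c=h^k(c^{\bullet 2^k})$ and phrases the cancellation via the absence of zero divisors rather than via $c^{-1}$.
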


\begin{proof}
By \eqref{iter1} $c=c \bullet_h c=h(c\bullet c)=h(c^{\bullet2})$, therefore for all $k=1,2,\ldots$
\begin{align*}
c&=h(c)\bullet h(c)=h(h(c^{\bullet2}))\bullet h(h(c^{\bullet2}))=h^2(c^{\bullet 2^2})=\ldots=h^k(c^{\bullet 2^k}).
\end{align*}
Since $h^d=\mathrm{id}$, we obtain $c =h^{d}(c^{\bullet 2^d})=c^{\bullet 2^d}$,
i.e. $c\bullet (c^{\bullet (2^d-1)}-e)=0$, hence by the assumptions $(\Alg,\bullet)$ does not contain  divisors of zero, we conclude that $c^{\bullet (2^d-1)}=e$.
\end{proof}



\section{Inner isotopes of a quotient polynomial algebra}\label{sec:auto}\label{sec:tau}

It worthy to point out that although the algebras $\Field[z]/P$ considered in the introduction  originate from polynomials, their multiplicative structure does not depend on a particular choice of a polynomial or set of its roots, but only on the dimension $n$ (i.e. the number of \textit{distinct} roots)  and the choice of a conjugacy class of permutation $\sigma\in S_n$. Furthermore, under the splitting condition, the algebra $\Field[z]/P$ turns out isomorphic the direct product algebra of $n=\deg P$ copies of the ground field $\Field$, the theorem below claims.

\begin{theorem}\label{thA}
Let a polynomial $P\in \Field[z]$ split over $\Field$ and have $n=\deg P$ distinct roots. Then
\begin{equation}\label{isom}
\Field[z]/P\cong
(\Field^{n} ,\bullet).
\end{equation}
The automorphism group
$$
\Aut(\Field[z]/P)\cong \Aut(\Field^{n} ,\bullet)\cong S_n
$$
is isomorphic to the symmetric group $S_n$.
Furthermore, two inner isotopes of $(\Field^{n} ,\bullet_\sigma)$ and $(\Field^{n} ,\bullet_\tau)$, $\sigma, \tau\in S_n$ are isomorphic if and only if $\sigma$ and $\tau$ conjugate in $S_n$.
\end{theorem}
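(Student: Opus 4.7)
The plan is to prove the theorem in three logically independent pieces, corresponding to the three claims in the statement.

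For the first isomorphism $\Field[z]/P\cong (\Field^n,\bullet)$, I would invoke the Chinese Remainder Theorem. Writing $P(z)=\prod_{i=1}^n(z-\alpha_i)$ with the $\alpha_i$ pairwise distinct (which uses the splitting hypothesis), the ideals $(z-\alpha_i)\subset \Field[z]$ are pairwise comaximal, so the evaluation map
$$
\Phi:\Field[z]/P\to \Field^n,\qquad [p(z)]\mapsto (p(\alpha_1),\ldots,p(\alpha_n))
$$
is a ring isomorphism; commutative ring multiplication on the right-hand side is exactly coordinate-wise, which is the $\bullet$-product defining $(\Field^n,\bullet)$. This immediately gives $\Aut(\Field[z]/P)\cong\Aut(\Field^n,\bullet)$, so it suffices to compute the latter.

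For $\Aut(\Field^n,\bullet)\cong S_n$, I would first observe that $\Idm(\Field^n,\bullet)\cup\{0\}$ consists exactly of the $2^n$ vectors with coordinates in $\{0,1\}$, and that the \emph{primitive} nonzero idempotents (those not expressible as a sum of two nonzero orthogonal idempotents) are precisely the standard basis vectors $e_1,\ldots,e_n$ from \eqref{standard}. Any $h\in\Aut(\Field^n,\bullet)$ sends idempotents to idempotents and, preserving orthogonal decompositions, sends primitive idempotents to primitive idempotents; hence $h$ restricts to a permutation $\sigma_h\in S_n$ of $\{e_1,\ldots,e_n\}$. Since these form a basis of $\Field^n$, the map $h\mapsto \sigma_h$ is injective. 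For surjectivity I would check that every permutation of the $e_i$ extended linearly is a $\bullet$-algebra automorphism, using $e_i\bullet e_j=\delta_{ij}e_i$.

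Finally, for the isomorphy criterion for inner isotopes, the plan is simply to apply Corollary~\ref{cor:conj} to $(\Field^n,\bullet)$. Since this algebra is unital with unity $(1,\ldots,1)$, condition \eqref{simple} holds automatically (indeed $e\bullet x=x$ for all $x$). Therefore for any $\sigma,\tau\in S_n=\Aut(\Field^n,\bullet)$, the corollary gives $(\Field^n,\bullet_\sigma)\cong(\Field^n,\bullet_\tau)$ if and only if $\sigma$ and $\tau$ are conjugate in $S_n$, which is the statement.

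The only delicate step is identifying the automorphism group as $S_n$: the key is the rigidity supplied by primitivity of the $e_i$, which forces any automorphism to permute them rather than mixing coordinates in a more exotic way. The other two parts are essentially applications of classical results (CRT) or of results proved earlier in the paper (Corollary~\ref{cor:conj}).
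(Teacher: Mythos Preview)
Your proposal is correct and follows essentially the same line as the paper's proof: the Chinese Remainder Theorem for the first isomorphism, the primitivity of the $e_i$ to identify $\Aut(\Field^n,\bullet)$ with $S_n$, and an appeal to Corollary~\ref{cor:conj} (via unitality, hence \eqref{simple}) for the classification of inner isotopes. The only cosmetic difference is that you write the CRT isomorphism explicitly as the evaluation map, whereas the paper simply invokes the CRT factorization.
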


\begin{proof}[Proof of Theorem~\ref{thA}]
Under our assumptions, $P=(z-a_1)\ldots(z-a_n)$, where $\{a_1,\ldots, a_n\}$ are the distinct roots of $P$. Then the Chinese Remainder Theorem \cite[Propoisition~15]{Dummit} gives an explicit isomorphism
\begin{align*}
\Field[z]/P&\cong
\Field[z]/(z-a_1)\cdot \ldots\cdot (z-a_n)\\
&\cong (\Field[z]/(z-a_1))\times \ldots\times (\Field[z]/(z-a_n))\\
&\cong \underbrace{(\Field,\cdot)\times\ldots \times (\Field,\cdot)}_{n\text{ times}}\\
&\cong (\Field^{n} ,\bullet).
\end{align*}
implying \eqref{isom}, where $\bullet$ is the standard coordinate-wise multiplication on $\Field^{n} $. Let $e_i=(0,\ldots,1,\ldots,0)$, $1\le i\le n$ be the standard basis of $\Field^{n} $ (see section~\ref{sec:prelim}). Then it follows that $e_i\bullet e_i=e_i$, hence $e_i$ are idempotents of $(\Field^{n} , \bullet)$, and, moreover, the partial sums
\begin{equation}\label{primitive1}
e_{I}:=\sum_{i\in I} e_i,\qquad I\in  2^{\{1,2,\ldots , n\}}
\end{equation}
are the only nonzero idempotents in $\Idm(\Field^{n} ,\bullet)$. Furthermore, since $e_i\bullet e_j=0$ for any $1\le i< j\le n$, $\{e_i\}_{1\le i\le n}$ are the only  primitive (i.e. indecomposable) idempotents  in $\Idm(\Field^{n} ,\bullet)$. If $\phi\in \Aut(\Field^{n} ,\bullet)$ is an algebra automorphism  then $\phi(x\bullet x)=\phi(x)\bullet \phi(x)$, hence $\phi$ is a permutation of the set of nonzero idempotents $\Idm(\Field^{n} ,\bullet)$. Since $\phi(x+ y)=\phi(x)+ \phi(y)$, $\phi$ preserve primitive idempotents, thus $\phi$ is  a permutation of the set  $\{e_i\}_{1\le i\le n}\subset \Idm(\Field^{n} ,\bullet)$. This implies that $\Aut(\Field^{n} ,\bullet)$ is a subgroup of $S_n$. On the other hand, if $\sigma \in S_n$ is an arbitrary permutation, then the linear map
\begin{equation}\label{psiiso}
\psi_\sigma: (x_1,\ldots,x_n)\to (x_{\sigma(1)},\ldots, x_{\sigma(n)})
\end{equation}
is  an isomorphism of $(\Field^{n} ,\bullet)$ implying that in fact $\Aut(\Field^{n} ,\bullet)\cong S_n$. Finally, since $(\Field^{n} ,\bullet)$ trivially satisfies \eqref{simple}, we conclude by Corollary~\ref{cor:conj} that  two inner isotopes of $(\Field^{n} ,\bullet_\sigma)$ and $(\Field^{n} ,\bullet_\tau)$, $\sigma, \tau\in S_n$ are isomorphic if and only if $\sigma$ and $\tau$ conjugate in $S_n$.
\end{proof}

\begin{remark}\label{rem:cyc}
It follows from Theorem~\ref{thA} that  it suffices to consider some specific polynomial $P(z)$ in each degree $n$. The circular polynomials $P(z)=z^n-1$ are distinguished in many respects. The corresponding polynomial quotient algebra $(\Field^n,\bullet):=(\Field[z]/(z^n-1),\bullet)$ and its inner isotopes $(\Field^n,\bullet_\sigma)$ have originally been introduced and studied in \cite{KrTk23a} in the particular case when $\sigma=(2\,3\, \ldots  \,n\,1)$ (in cycle notation). The corresponding algebra is {isospectral} and as a corollary of the syzygy relation \cite{KrTk18a}, the spectrum of each idempotent is the set of roots of $z^n-1$.
\end{remark}


Below we consider the general case of a permutation $\sigma$ with several  cycles. It turns out that the resulting algebra decomposes as a direct sum of the ideals corresponding to
the decomposition of $\sigma$ into disjoint permutations.
By abuse of notation, we shall write
\begin{equation}\label{vsig1}
\Sspan{\sigma_j}=\Span{\{e_i:i\in [\sigma_j]\}}=\bigoplus_{i\in [\sigma_j]} \Sspan{e_i},
\end{equation}
where $[\sigma_j]\subset \bar n=\{1,2,\ldots,n\}$ is the orbit of the cycle $\sigma_j$. Each cycle $\sigma_j$ acts as a cyclic permutation of order $|\sigma_j|$ on the orbit $[\sigma_j]$ such that the set of indices $\bar{n}$ is a disjoint union of the orbits $[\sigma_j]$, $1\le j\le r$.


\begin{proposition}\label{prB}
Let a permutation $\sigma\in S_n$ have a disjoint cycle decomposition $\sigma=\sigma_1\ldots \sigma_r$. Then
\begin{equation}\label{desired}
(\Field^{n} ,\bullet_\sigma)\cong\bigoplus_{j=1}^r (\Sspan{\sigma_j},\bullet_\sigma), \qquad (\Sspan{\sigma_j},\bullet_\sigma)\cong (\Field^{ |\sigma_j|} ,\bullet_{\sigma_j}).
\end{equation}
\end{proposition}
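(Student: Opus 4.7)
The plan is to establish the direct sum decomposition first as an internal decomposition inside $(\Field^n,\bullet_\sigma)$, and then recognize each summand as an abstract polynomial algebra with its own cyclic inner isotope.

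\textbf{Step 1: each $\Sspan{\sigma_j}$ is an ideal of $(\Field^n,\bullet_\sigma)$.} From \eqref{Vj} and \eqref{vsig1}, $\Sspan{\sigma_j}=\bigoplus_{i\in[\sigma_j]}\Sspan{e_i}$ is already an ideal of $(\Field^n,\bullet)$, because it is a sum of the pairwise orthogonal ideals $\Sspan{e_i}$. To upgrade this to an ideal of the inner isotope, recall from \eqref{hdef} that $x\bullet_\sigma y=\psi_\sigma(x\bullet y)$, so it suffices to check that $\Sspan{\sigma_j}$ is stable under $\psi_\sigma$. Since the disjoint cycles act independently and $\sigma$ permutes the orbit $[\sigma_j]$ to itself, $\psi_\sigma$ permutes the basis $\{e_i:i\in[\sigma_j]\}$, hence preserves $\Sspan{\sigma_j}$. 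Combining these two facts gives $\Alg\bullet_\sigma \Sspan{\sigma_j}\subset \Sspan{\sigma_j}$.

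\textbf{Step 2: orthogonality and internal direct sum.} For $j\ne k$ and $i\in[\sigma_j]$, $i'\in[\sigma_k]$ one has $e_i\bullet e_{i'}=0$ (since the orbits are disjoint), so $\Sspan{\sigma_j}\bullet\Sspan{\sigma_k}=0$, and therefore $\Sspan{\sigma_j}\bullet_\sigma\Sspan{\sigma_k}=\psi_\sigma(0)=0$. Since as vector spaces $\Field^n=\bigoplus_{j=1}^r \Sspan{\sigma_j}$ by \eqref{vsig1} and \eqref{alls}, this yields the first isomorphism in \eqref{desired}.

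\textbf{Step 3: identifying each summand with $(\Field^{|\sigma_j|},\bullet_{\sigma_j})$.} Fix an enumeration of the orbit $[\sigma_j]=\{i_1,\ldots,i_{s_j}\}$ with $s_j=|\sigma_j|$ chosen so that $\sigma_j$ acts by $i_t\mapsto i_{t+1 \pmod{s_j}}$. Define the linear map $f_j:\Sspan{\sigma_j}\to \Field^{s_j}$ by $e_{i_t}\mapsto \tilde e_t$, where $\{\tilde e_t\}$ is the standard basis of $\Field^{s_j}$. By construction $f_j$ carries the coordinate-wise product on $\Sspan{\sigma_j}$ to the coordinate-wise product on $\Field^{s_j}$, i.e.\ $f_j(x\bullet y)=f_j(x)\bullet f_j(y)$, and intertwines the action of $\sigma$ on $\Sspan{\sigma_j}$ with the action of the standard cyclic permutation $\sigma_j$ on $\Field^{s_j}$: $f_j\circ \psi_\sigma|_{\Sspan{\sigma_j}}=\psi_{\sigma_j}\circ f_j$. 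Hence
$$
f_j(x\bullet_\sigma y)=f_j(\psi_\sigma(x\bullet y))=\psi_{\sigma_j}(f_j(x)\bullet f_j(y))=f_j(x)\bullet_{\sigma_j}f_j(y),
$$
which gives the second isomorphism in \eqref{desired}.

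The main subtlety is purely bookkeeping: one must verify that $\Sspan{\sigma_j}$ is stable under $\psi_\sigma$ (not just under the abstract automorphism $\sigma_j$) so that restricting $\bullet_\sigma$ to $\Sspan{\sigma_j}$ is well defined, and that under the identification $f_j$ the restricted automorphism agrees with the standard cyclic permutation defining $\bullet_{\sigma_j}$. Both reduce to the observation that disjoint cycles act independently on the basis indexed by $\bar n$, so on $\Sspan{\sigma_j}$ the permutation $\sigma$ acts exactly as $\sigma_j$.
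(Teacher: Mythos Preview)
Your proof is correct and follows essentially the same approach as the paper: showing that the $\Sspan{\sigma_j}$ are $\psi_\sigma$-stable pairwise orthogonal $\bullet$-ideals (hence $\bullet_\sigma$-ideals), and then identifying each summand with $(\Field^{|\sigma_j|},\bullet_{\sigma_j})$ via a basis bijection that intertwines $\psi_\sigma|_{\Sspan{\sigma_j}}$ with $\psi_{\sigma_j}$. The paper packages the last step into a commutative diagram and the phrase ``arguing as above'', whereas you spell out the intertwining relation $f_j\circ \psi_\sigma=\psi_{\sigma_j}\circ f_j$ explicitly, which is in fact a bit more transparent.
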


\begin{proof}
In the above notation, we have
\begin{equation}\label{vsig10}
(\Sspan{\sigma_j},\bullet)\cong (\Field^{ |\sigma_j|},\bullet).
\end{equation}
The corresponding isomorphism $\psi_{\sigma}$ of $(\Field^{n} ,\bullet)$ in \eqref{psiiso} decomposes in the direct sum $\psi_{\sigma}=\bigoplus_{1\le j\le r} \psi_{\sigma_j}$, where each $\psi_{\sigma_j}\in \End(\Sspan{\sigma_j})$ is determined as the restriction of $\psi_\sigma$ to $\Sspan{\sigma_j}$ and moreover
\begin{equation}\label{vsig2}
(\Field^{n},\bullet)= \bigoplus_{j=1}^r (\Sspan{\sigma_j},\bullet).
\end{equation}
It follows from the definitions that $\Sspan{\sigma_i}\bullet \Alg\subset \Sspan{\sigma_i}$ and $\Sspan{\sigma_i}\bullet \Sspan{\sigma_j}=0$ for $i\ne j$, therefore \eqref{vsig2} is a decomposition into the direct sum of \textit{ideals}.
Let $\pi_j:\Field^{n}\to \Sspan{\sigma_j}$ denote the canonical projection (a linear homomorphism).
Then $\pi_j:(\Field^{n},\bullet) \to (\Sspan{\sigma_j}, \bullet)$ is also an algebra homomorphism and the following commutative diagram holds:
\begin{equation}\label{vsig3}
\begin{CD}
(\Field^{n},\bullet) @>\pi_j>> (\Sspan{\sigma_j}, \bullet)\\
@VV \psi_{\sigma}V @VV\psi_{\sigma_j}V\\
(\Field^{n},\bullet) @>\pi_j>>(\Sspan{\sigma_j}, \bullet)
\end{CD}
\end{equation}
 where the vertical arrows are algebra isomorphisms.
Since $\psi_{\sigma_j}\in \Aut(\Sspan{\sigma_j},\bullet)$, arguing as above we conclude that
\begin{equation}\label{vsig20}
(\Field^{n},\bullet_\sigma)= \bigoplus_{j=1}^r (\Sspan{\sigma_j},\bullet_\sigma),
\end{equation}
where $(\Sspan{\sigma_j},\bullet_\sigma)\cong (\Field^{ |\sigma_j|},\bullet_{\sigma_j})$ are pairwise orthogonal ideals in $(\Field^{n},\bullet_\sigma)$, implying \eqref{desired}.
\end{proof}

By virtue of Proposition~\ref{prB} and Proposition~\ref{pro:decompose}, it suffices to study inner isotopes $(\Field^n,\bullet_\tau)$  for the case of a single cycle $\tau$, we will consider this in the next section.

\section{The  structure of idempotents}\label{sec:single}
First we consider the case $(\Field^n,\bullet_\tau)$ (which is equivalent to the  construction mentioned in Remark~\ref{rem:cyc}) in more detail, i.e. we assume that $\tau$ contains a single cycle. More precisely, let $n\ge 2$ be an integer and $\tau=(2\,3\, \ldots  \,n\,1)\in S_n$ be the right cyclic shift (in cycle notation).
We shall suppose that a field $\Field$ is $\tau$-admissible (see Definition~\ref{def:adm} above), i.e. there are primitive roots of unity of orders $n$ and $2^n-1$ in $\Field$; denote them by $\epsilon$ and $\zeta$, respectively.

\begin{proposition}\label{pro:tau}
The set of nonzero idempotents of $(\Field^n,\bullet_\tau)$ can be  parameterized by
\begin{equation}\label{xcyc2}
\{c_k=(\zeta^{2^{n-1}k},\zeta^{2^{n-2}k},\ldots,\zeta^{2k},\zeta^k):\quad k\in \mathbb{Z}/(2^n-1)\mathbb{Z}\},
\end{equation}
where all $c_k$ are pairwise distinct. The idempotents satisfy
\begin{equation}\label{xcyc3}
c_i\bullet_\tau c_j =c_{i\circledast j},
\end{equation}
where the binary operation $\circledast$ on $\mathbb{Z}/(2^n-1)\mathbb{Z}$ is defined by
\begin{equation}\label{xcyc4}
i\circledast j\equiv \frac12(i+j)\equiv 2^{n-1}(i+j)\mod (2^n-1).
\end{equation}
\end{proposition}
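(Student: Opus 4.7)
The plan is to analyze the idempotent equation $c \bullet_\tau c = c$ componentwise, solve the resulting recurrence, and then verify the multiplication table by direct computation.

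First I would unfold the definition: since $x \bullet_\tau y = \psi_\tau(x \bullet y)$ and $\psi_\tau(x)_i = x_{\tau(i)}$, the idempotent equation $(c \bullet_\tau c)_i = c_i$ becomes
\begin{equation*}
c_{\tau(i)}^2 = c_i, \qquad 1 \le i \le n.
\end{equation*}
With $\tau(i) = i+1$ for $i<n$ and $\tau(n)=1$, this reads $c_i = c_{i+1}^2$ (indices cyclic mod $n$). From this recurrence, if some $c_i \ne 0$ then $c_{i+1} \ne 0$ (since $c_{i+1}^2 = c_i$), so by iterating around the cycle either all coordinates vanish or none do. Iterating the recurrence $n$ times returns to the same index, giving $c_i = c_i^{2^n}$, so each nonzero coordinate satisfies $c_i^{2^n-1} = 1$. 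Because $\Field$ is $\tau$-admissible, the polynomial $z^{2^n-1}-1$ splits in $\Field$ with $\zeta$ a primitive root, hence any nonzero idempotent is determined by $c_n = \zeta^k$ for a unique $k \in \mathbb{Z}/(2^n-1)\mathbb{Z}$, and back-substitution $c_{n-j} = c_{n-j+1}^2 = \cdots = \zeta^{2^j k}$ yields exactly the formula \eqref{xcyc2}. The parameterization is injective since the last coordinate $\zeta^k$ distinguishes different $k$, and the closing consistency $c_n = c_1^2$ reduces to $\zeta^k = \zeta^{2^n k}$, which holds automatically modulo $2^n-1$.

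For the multiplication identity \eqref{xcyc3}, I would simply compute
\begin{equation*}
(c_i \bullet_\tau c_j)_\ell = (c_i)_{\tau(\ell)} (c_j)_{\tau(\ell)} = \zeta^{2^{n-\ell-1}(i+j)}
\end{equation*}
for $\ell < n$ (and analogously $\zeta^{2^{n-1}(i+j)}$ for $\ell = n$). Matching this against $(c_m)_\ell = \zeta^{2^{n-\ell}m}$ gives the congruence $2^{n-\ell-1}(i+j) \equiv 2^{n-\ell}m \pmod{2^n-1}$. Since $2^n-1$ is odd, $2$ is a unit modulo $2^n-1$ with inverse $2^{n-1}$ (because $2 \cdot 2^{n-1} = 2^n \equiv 1$), so the unique solution is $m \equiv 2^{n-1}(i+j) \pmod{2^n-1}$. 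The same congruence emerges from the $\ell=n$ case, so the product is consistently $c_{i \circledast j}$ with $\circledast$ as in \eqref{xcyc4}.

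There is no serious obstacle: the argument is a straightforward coordinate calculation. The only point requiring care is index bookkeeping when applying $\tau$ and closing the cyclic recurrence, together with observing that the invertibility of $2$ modulo $2^n-1$ both forces the parameterization and produces the halving formula. As a sanity check, the construction yields $2^n - 1$ nonzero idempotents which together with $0$ account for $2^n = 2^{\dim \Field^n}$ solutions of the idempotent system, consistent with the Bézout bound in \eqref{Bezout}, and Corollary~\ref{cor:mult} guarantees a priori that the set is closed under $\bullet_\tau$, in agreement with \eqref{xcyc3}.
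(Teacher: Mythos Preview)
Your proposal is correct and follows essentially the same route as the paper: both write the idempotent condition as the cyclic recurrence $c_{i+1}^2=c_i$, deduce the all-or-nothing dichotomy and $c_i^{2^n-1}=1$, parameterize by the last coordinate $c_n=\zeta^k$, and then read off the product $c_i\bullet_\tau c_j$ coordinatewise to obtain the index $2^{n-1}(i+j)$. Your write-up is slightly more explicit about injectivity of the parameterization and about inverting $2$ modulo $2^n-1$, but there is no substantive difference in method.
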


\begin{proof}
The multiplication of $c=(x_1,\ldots,x_n)$ in $(\Field^n,\bullet_\tau)$ is given  by
$$
c\bullet_\tau c=(x_{\tau(1)}^2,\ldots, x_{\tau(n)}^2)=(x_2^2,x_3^2,\ldots,x_n^2,x_1^2),
$$
thus $c$ is an idempotent if and only if
\begin{equation}\label{xcyc}
x_{i+1}^2=x_i\quad \text{ for each $i\in \mathbb{Z}/n\mathbb{Z}$.}
\end{equation}
Iterating the latter relations $n$ times yields $x_{i}^{2^n}=x_i$ for any $i$. Together with \eqref{xcyc} this implies that \textit{either all of $x_i$ are zero (and in that case $c=0$), or all $x_i$ are nonzero, and in the latter case they satisfy}
\begin{equation}
\label{ccyc}
x_{i}^{2^n-1}-1=0, \qquad \forall i\in \mathbb{Z}/n\mathbb{Z}.
\end{equation}
It follows from \eqref{xcyc} that any nonzero idempotent $x$ can be written as
\begin{equation}\label{xcyc1}
x=(t^{2^{n-1}},t^{2^{n-2}},\ldots,t^{2^{1}},t),
\end{equation}
where $t$ is a primitive root of unity of order $2^n-1$, hence   \eqref{xcyc1} readily implies  \eqref{xcyc2}.
Finally, we have
$$c_i\bullet_\tau c_j =(\zeta^{2^{n-2}(i+j)},\zeta^{2^{n-3}(i+j)},\ldots,\zeta^{(i+j)},\zeta^{2^{n-1}(i+j)})
$$
implying \eqref{xcyc3}.

\end{proof}

Identity \eqref{xcyc3} expresses the fact that the product of any two nonzero idempotents in $\Idm(\Field^n, \bullet_\tau)$ is a nonzero idempotent again, in other words, the set $\Idm(\Field^n, \bullet_\tau)$ is a multiplicative magma. Furthermore,  \eqref{xcyc3} implies a magma isomorphism
$$
\Idm(\Field^n, \bullet_\tau)\cong (\mathbb{Z}/(2^n-1)\mathbb{Z},\,\circledast)
$$
Moreover, we have

\begin{proposition}\label{promagma}
The multiplicative magma $(\mathbb{Z}/(2^n-1)\mathbb{Z},\,\circledast)$ is  a commutative idempotent medial quasigroup.
\end{proposition}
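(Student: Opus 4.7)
My plan is to verify each of the four properties (commutativity, idempotency, mediality, quasigroup) by direct computation, exploiting the crucial arithmetic identity $2^n \equiv 1 \pmod{2^n-1}$, which in particular makes $2$ and $2^{n-1}$ mutually inverse modulo $2^n-1$.

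First, commutativity is immediate from the symmetry of $i+j$ in the defining formula $i\circledast j \equiv 2^{n-1}(i+j)$. For idempotency, I compute $i\circledast i \equiv 2^{n-1}\cdot 2i = 2^n i \equiv i \pmod{2^n-1}$. For mediality, I expand both sides:
$$(i\circledast j)\circledast(k\circledast l) \equiv 2^{n-1}\bigl(2^{n-1}(i+j)+2^{n-1}(k+l)\bigr)\equiv 2^{2(n-1)}(i+j+k+l),$$
which is totally symmetric in the four arguments, so it equals $(i\circledast k)\circledast(j\circledast l)$.

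The quasigroup property requires showing that for any $a, b \in \mathbb{Z}/(2^n-1)\mathbb{Z}$ the equations $a\circledast x = b$ and $x\circledast a = b$ have unique solutions. By commutativity it suffices to handle one of them. Since $2^n - 1$ is odd, $\gcd(2^{n-1}, 2^n-1) = 1$ and $2^{n-1}$ is a unit modulo $2^n-1$; explicitly its inverse is $2$ because $2\cdot 2^{n-1} = 2^n \equiv 1$. Hence $a\circledast x \equiv b$ rewrites as $a + x \equiv 2b \pmod{2^n-1}$, which has the unique solution $x \equiv 2b - a$.

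The calculation is entirely routine and there is no real obstacle; the only thing that needs to be noted carefully is the standing $\tau$-admissibility assumption, which ensures that $\Field$ contains a primitive root of unity of order $2^n-1$ (so that the parameterization \eqref{xcyc2} really exhausts $\Idm(\Field^n,\bullet_\tau)$ and the magma isomorphism with $(\mathbb{Z}/(2^n-1)\mathbb{Z},\circledast)$ is meaningful). Once this is in place, the four identities above give the claim directly.
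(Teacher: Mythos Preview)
Your proof is correct and follows essentially the same approach as the paper's own proof: both verify the quasigroup property via the explicit solution $x\equiv 2b-a$, compute idempotency from $2^n\equiv 1$, and obtain mediality from the total symmetry of the iterated expression. Your remark about $\tau$-admissibility is unnecessary here, since the proposition concerns only the arithmetic structure $(\mathbb{Z}/(2^n-1)\mathbb{Z},\circledast)$ and makes no reference to $\Field$; but this does not affect the correctness of the argument.
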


\begin{proof}
The quasigroup property can be seen as follows: if $s,t,r\in (\mathbb{Z}/(2^n-1)\mathbb{Z},\,\circledast)$ are such that $s \circledast r=s \circledast t$ then $(t-r)/2\equiv 0\mod (2^n-1)$, hence $t=r$ in $(\mathbb{Z}/(2^n-1)\mathbb{Z},\,\circledast)$. Also, given arbitrary $s,t\in (\mathbb{Z}/(2^n-1)\mathbb{Z},\,\circledast)$, there exists precisely one solution $r:=2t-s$ to the following equation:
$$
s \circledast r=r\circledast s=\frac12(s+2t-s)=t.
$$
Next, $s\circledast s=s$ for all $s\in (\mathbb{Z}/(2^n-1)\mathbb{Z},\,\circledast)$, thus the quasigroup is idempotent. Finally, since
$$
(i \circledast j) \circledast (k \circledast l)\equiv \frac12(i+j+k+l)\mod (2^n-1)
$$
is totally symmetric in all variables, $(\mathbb{Z}/(2^n-1)\mathbb{Z},\,\circledast)$ is a medial quasigroup.
\end{proof}


\begin{proposition}\label{pro:eigentau}
The algebra $(\Field^n,\bullet_\tau)$ has exactly $2^n-1$ distinct regular idempotents $c_k$, i.e. it is generic. Each idempotent $c_k\in \Idm(\Field^n,\bullet_\tau)$ has  the spectrum $\epsilon,\epsilon^2,\ldots, \epsilon^n $, each eigenvalue has multiplicity one. In other words, the characteristic polynomial of $L_{\bullet_\tau}(c_k)$ is given by
\begin{equation}\label{char2}
\det(\lambda\mathds{1}-L_{\bullet_\tau}(c_k))=\lambda^n-1.
\end{equation}
In particular,
\begin{equation}
\label{Ln}
(L_{\bullet_\tau}(c_k))^n=\mathds{1}.
\end{equation}
\end{proposition}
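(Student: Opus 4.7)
The plan is to compute $L_{\bullet_\tau}(c_k)$ as an explicit matrix in the standard basis and read off its characteristic polynomial directly. Writing $c_k = \sum_{j=1}^n t_j e_j$ with $t_j := \zeta^{2^{n-j}k}$, and using that $\psi_\tau(e_j) = e_{j-1}$ (indices mod $n$), one immediately obtains
\[
L_{\bullet_\tau}(c_k)\, e_j = \psi_\tau(c_k \bullet e_j) = \psi_\tau(t_j e_j) = t_j\, e_{j-1}.
\]
Thus in the basis $\{e_j\}$ the matrix of $L_{\bullet_\tau}(c_k)$ is a weighted cyclic shift with entries $t_2, t_3, \ldots, t_n$ on the superdiagonal and $t_1$ in the $(n,1)$-position, all remaining entries being zero.

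A standard cofactor expansion shows that such a weighted $n$-cycle matrix has characteristic polynomial $\lambda^n - \prod_{j=1}^n t_j$. Here the product telescopes via $\sum_{j=1}^n 2^{n-j} = 2^n - 1$:
\[
\prod_{j=1}^n t_j = \zeta^{k\sum_{j=1}^n 2^{n-j}} = \zeta^{k(2^n-1)} = 1,
\]
since $\zeta$ is a primitive $(2^n-1)$-th root of unity. This proves \eqref{char2}, and by $\tau$-admissibility of $\Field$ the polynomial $\lambda^n - 1$ splits into $n$ distinct linear factors over $\Field$, giving the spectrum $\{\epsilon, \epsilon^2, \ldots, \epsilon^n\}$ as asserted. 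Identity \eqref{Ln} is then immediate from the Cayley--Hamilton theorem.

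For the genericity claim, Proposition~\ref{pro:tau} already supplies exactly $2^n-1$ distinct nonzero idempotents; together with the zero idempotent this gives $2^n$ idempotents in total, saturating the B\'ezout bound \eqref{Bezout}. It remains to verify regularity. For $c = c_k$ the eigenvalues of $L_{\bullet_\tau}(c_k)$ are the $n$-th roots of unity, so $\frac{1}{2}$ appears among them only if $2^n \equiv 1$ in $\Field$, i.e. $\mathrm{char}(\Field)$ divides $2^n-1$, which is excluded by Definition~\ref{def:adm}. For $c = 0$ we have $\det(-\frac{1}{2}\mathds{1}_n) = (-\frac{1}{2})^n \ne 0$ since $\mathrm{char}(\Field) \ne 2$. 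This yields $2^n$ regular idempotents, proving genericity.

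No step is genuinely delicate; the essential algebraic content is the telescoping identity $\sum_{j=0}^{n-1} 2^j = 2^n - 1$, which forces $\prod_j t_j = 1$ independently of $k$ and is therefore the reason all idempotents share the same spectrum. An alternative route would be to invoke the conjugation relation \eqref{c1c2} to transport the characteristic polynomial from one idempotent to all others, but since $(\Field^n,\bullet)$ is not a division algebra, Proposition~\ref{pro:quasi} does not apply directly, and the explicit matrix computation above is the cleanest path.
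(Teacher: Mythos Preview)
Your proof is correct and takes a genuinely different route from the paper's. The paper constructs, for each idempotent $c_k$ and each $p\in\mathbb{Z}/n\mathbb{Z}$, an explicit eigenvector $\eta_{k,p}$ with eigenvalue $\epsilon^p$, and then argues by dimension that these exhaust the spectrum. You instead compute the matrix of $L_{\bullet_\tau}(c_k)$ in the standard basis as a weighted cyclic shift and read off the characteristic polynomial from the telescoping identity $\prod_j t_j=\zeta^{k(2^n-1)}=1$. Your approach is more direct for establishing \eqref{char2}, and you are also more careful on the genericity point: you treat both the nonzero $c_k$ and $c=0$ explicitly, and you invoke the $\tau$-admissibility hypothesis to rule out $\tfrac12$ as an eigenvalue. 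The paper's approach, on the other hand, has the advantage of producing the eigenvectors $\eta_{k,p}$ explicitly; these are reused immediately afterward to verify the cyclic fusion law \eqref{etarule}, a step your matrix computation does not supply and which would have to be done separately.
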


\begin{proof}
Given $p\in \mathbb{Z}/n\mathbb{Z}$ and $k\in \mathbb{Z}/(2^n-1)\mathbb{Z}$, we define
\begin{equation}\label{eigenp}
\eta_{k,p}:=(z_1, \epsilon^p z_2, \, \epsilon^{2p} z_3,\ldots , \epsilon^{(n-1)p}z_n),
\end{equation}
where $z_i$ will be specified later.
By  \eqref{xcyc2} to  $c_k=(\zeta^{2^{n-1}k},\zeta^{2^{n-2}k},\ldots,\zeta^{2^{1}k},\zeta^k)$, hence
\begin{align*}
c_k\bullet_\tau \eta_{k,p}&=
(
\epsilon^p \zeta^{2^{n-2}k}z_2,\,
\epsilon^{2p} \zeta^{2^{n-3}k}z_3,\,
\ldots\,
\epsilon^{(n-1)p} \zeta^{2^{0}k}z_n,\,
\epsilon^{np} \zeta^{2^{n-1}k}z_1
)\\
&=\epsilon^p\cdot (
 \zeta^{2^{n-2}k}z_2,\,
\zeta^{2^{n-3}k}\epsilon^{p} z_3,\,
\ldots\,
\zeta^{2^{0}k}\epsilon^{(n-2)p} z_n,\,
\zeta^{2^{n-1}k}\epsilon^{(n-1)p} z_1
),
\end{align*}
therefore, setting
$$
z_i:=\zeta^{-(2^{n-2}+\ldots +2^{n-i})k}z_1\quad \text{ for $i=2,3,\ldots, n$}
$$
we see that
$$
\zeta^{2^{n-i}k}z_i=\zeta^{2^{n-i}k}\cdot\zeta^{-(2^{n-2}+\ldots +2^{n-i})k}z_1=z_{i-1},\quad 2\le i\le n.
$$
Since $2^{n-2}+\ldots +2^{1}+2^0=2^{n-1}-1\equiv -2^{n-1}\mod (2^n-1)$, we get
$$
z_n=\zeta^{-(2^{n-2}+\ldots +2^{1}+2^0)k}z_1=\zeta^{2^{n-1}k}z_1
$$
implying together with the above that
\begin{equation}\label{eigen0}
c_k\bullet_\tau \eta_{k,p}=
\epsilon^p\cdot (z_1, \epsilon^p z_2, \, \epsilon^{2p} z_3,\ldots , \epsilon^{(n-1)p}z_n)=\epsilon^p \eta_{k,p}.
\end{equation}
In other words,
\begin{equation}\label{eigen1}
\eta_{k,p}=(1,\epsilon^p \zeta^{-2^{n-2}k}, \, \epsilon^{2p} \zeta^{-(2^{n-2}+2^{n-2})k},\ldots , \epsilon^{(n-1)p}\zeta^{-(2^{n-2}+\ldots +2^{1}+2^0)k})
\end{equation}
is an eigenvector of $L_{\bullet_\tau}(c_k)$ with eigenvalue $\epsilon^p$, for any $p\in \mathbb{Z}/n\mathbb{Z}$. Since all $\epsilon,\epsilon^2,\ldots, \epsilon^n $ are pairwise distinct, for the dimension reasons this implies that each eigenvalue $\epsilon^p$ is simple, and moreover the  eigen-decomposition of $L_{\bullet_\tau}(c_k)$ holds:
\begin{equation}\label{eigentau}
(\Field^n, \bullet_\tau)=\bigoplus_{p=1}^n \Span{\eta_{k,p}}.
\end{equation}
This also implies the explicit form of the characteristic polynomial of $L_{\bullet_\tau}(c_k)$ is given by \eqref{char2}. This implies that $\det(\frac12\mathds{1}-L_{\bullet_\tau}(c_k))\ne0$, therefore $(\Field^n, \bullet_\tau)$ is a generic algebra.
\end{proof}

\begin{remark}
It follows from \eqref{char2} that the generic algebra $(\Field^{n}, \bullet_\tau)$ is \textit{isospectral}. On the other hand, there are nongeneric commutative isospectral algebras  containing \textit{infinitely} many idempotents. Then their Peirce spectrum (i.e. the total set of eigenvalues of all idempotents) can have a completely different structure. This holds for the Hsiang algebras that appear in the context of cubic minimal cones; we refer an interested reader to \cite{NTVbook}, \cite{Tk18e}, \cite{Tk19a} for more details.
\end{remark}

We need the following auxiliary property

\begin{lemma}\label{lemzero}
$\sum_{k=1}^{2^n-1}(L_{\bullet_\tau}(c_k))^s=0$ for any $s\in \{1,2,\ldots, n-1\}$.
\end{lemma}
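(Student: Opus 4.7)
The plan is to work with the explicit matrix form of $A_k:=L_{\bullet_\tau}(c_k)$ in the standard basis. From the proof of Proposition~\ref{pro:tau}, for $x=(x_1,\ldots,x_n)$ one has
$(A_k x)_i=\zeta^{2^{n-\tau(i)}k}\,x_{\tau(i)}$,
so $A_k=SD_k$ is a weighted cyclic shift, where $D_k=\operatorname{diag}(\zeta^{2^{n-i}k})_{i=1}^n$ and $S$ is the permutation matrix of $\tau^{-1}$. I would first iterate this product. Because $S$ conjugates a diagonal matrix into another diagonal matrix by shifting its entries, an $s$-fold product $(SD_k)^s$ collapses to a single permutation times a diagonal, and a short induction on $s$ yields the closed form
\begin{equation}\label{Akspowers}
(A_k^s)_{i,j}=\delta_{j,\tau^s(i)}\,\zeta^{k\,\alpha_{i,s}},\qquad \alpha_{i,s}:=\sum_{m=1}^{s}2^{n-\tau^{m}(i)}.
\end{equation}

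The next step is the arithmetic analysis of $\alpha_{i,s}$. Since $\tau$ is the $n$-cycle $(2\,3\,\ldots\,n\,1)$ and $1\le s\le n-1$, the elements $\tau(i),\tau^2(i),\ldots,\tau^s(i)$ are $s$ pairwise distinct indices in $\{1,\ldots,n\}$. Hence the exponents $n-\tau^m(i)$ for $m=1,\ldots,s$ form a set of $s$ pairwise distinct elements of $\{0,1,\ldots,n-1\}$, so $\alpha_{i,s}$ equals the sum of $s$ distinct powers of $2$ chosen from $\{2^0,\ldots,2^{n-1}\}$. By binary uniqueness this gives $1\le\alpha_{i,s}\le 2^n-2$, so in particular $\alpha_{i,s}\not\equiv 0\pmod{2^n-1}$.

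The proof then concludes by summing \eqref{Akspowers} over $k$. For each pair of indices $(i,j)$,
\begin{equation*}
\Bigl(\sum_{k=1}^{2^n-1}A_k^s\Bigr)_{\!i,j}=\delta_{j,\tau^s(i)}\sum_{k=1}^{2^n-1}\zeta^{k\alpha_{i,s}}.
\end{equation*}
As $k$ ranges over $\{1,\ldots,2^n-1\}$, it covers $\mathbb{Z}/(2^n-1)\mathbb{Z}$ once, so the inner sum is a full geometric sum of a nontrivial character of that cyclic group and equals zero; this uses that $\Field$ is $\tau$-admissible (so $\zeta$ is a primitive $(2^n-1)$-th root of unity and the characteristic is coprime with $2^n-1$). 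Thus every matrix entry of $\sum_k A_k^s$ vanishes, as claimed.

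The only real obstacle I anticipate is the bookkeeping in the induction that establishes \eqref{Akspowers}; after that, the argument reduces to the combinatorial observation that a proper nonempty subset of $\{2^0,\ldots,2^{n-1}\}$ sums to an integer in $[1,2^n-2]$, which is automatically nonzero modulo $2^n-1$. One may note that the restriction $s\le n-1$ is sharp: for $s=n$ the exponents exhaust all of $\{0,\ldots,n-1\}$, giving $\alpha_{i,n}=2^n-1\equiv 0$ and recovering the identity $A_k^n=\mathds{1}$ from \eqref{Ln}.
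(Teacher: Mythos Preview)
Your proof is correct and follows essentially the same approach as the paper: both compute the $s$-th iterate of $L_{\bullet_\tau}(c_k)$ explicitly (the paper writes it as $(L_{\bullet_\tau}(c_k))^s x=\omega_k\bullet\tau^s(x)$ with $\omega_k=(\zeta^{km_1},\ldots,\zeta^{km_n})$, which is exactly your matrix factorization $S^sD'$), then show the relevant exponent is nonzero modulo $2^n-1$ and conclude via the geometric sum. The only cosmetic difference is that the paper factors $m_j=2^{\,n-j-s}(2^{s}-1)$ to see nonvanishing, whereas you use the binary-uniqueness observation that a proper nonempty subset of $\{2^0,\ldots,2^{n-1}\}$ sums to something in $[1,2^n-2]$.
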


\begin{proof}
Note that by the definition of isotopy, $x\bullet_\tau y=\tau(x\bullet y)$, where $\tau(x_1,\ldots,x_{n-1},x_n)=(x_2,\ldots,x_n,x_1)$ and $\bullet$ is the (commutative associative) coor\-di\-nate-wise  multiplication on $\Field^n$. Write \eqref{xcyc2} as $c_k=(a_1^k,\ldots,a_n^k)$, where $a_i=\zeta^{2^{n-i}}$ do not depend on $k$. Then iterating the definition of $\bullet_\tau$ we obtain
\begin{align*}
(L_{\bullet_\tau}(c_k))^s x&=\tau^s(c_k)\bullet \tau^{s-1}(c_k)\bullet\ldots \bullet \tau(c_k)\bullet  \tau^s(x)\\
&=\omega_k\bullet  \tau^s(x),
\end{align*}
where $\omega_k=\tau^s(c_k)\bullet \tau^{s-1}(c_k)\bullet\ldots \bullet \tau(c_k)$. Since $\tau$, \ldots, $\tau^s$ are cyclic coordinate shifts $\omega_k=(\zeta^{km_1},\zeta^{km_2},\ldots, \zeta^{km_n})$, where $m_i\in \mathbb{Z}_{2^n-1}$ do not depend on $k$, more precisely
$$
m_j=2^{n-j-1}+\ldots+2^{n-j-s}=2^{n-j-s}\cdot(2^{s}-1)\ne0 \quad\text{in $\mathbb{Z}_{2^n-1}$}
$$
Therefore $\zeta^{km_j}\ne1$ and we have
\begin{align*}
\sum_{k=1}^{2^n-1}(L_{\bullet_\tau}(c_k))^s x&=
\sum_{k=1}^{2^n-1} \omega_k\bullet  \tau^s(x)\\
&=\biggl(\sum_{k=1}^{2^n-1} (\zeta^{km_1},\zeta^{km_2},\ldots, \zeta^{km_n})\biggr)\bullet  \tau^s(x)=0
\end{align*}
because $\sum_{k=1}^{2^n-1}\zeta^{km_j}=\zeta^{m_j}(\zeta^{m_j(2^n-1)}-1)(\zeta^{m_j}-1)^{-1}=0$ for any $m_j$, the claim follows.
\end{proof}

\begin{corollary}\label{coraxial}
$\Span{\Idm(\Field^n, \bullet_\tau)}=\Field^n$.
\end{corollary}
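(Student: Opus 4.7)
The plan is to exhibit $n$ linearly independent vectors inside the family of idempotents $\{c_k\}$ classified in Proposition~\ref{pro:tau}, which immediately forces $\Span{\Idm(\Field^n,\bullet_\tau)}=\Field^n$. The natural choice is $c_0,c_1,\ldots,c_{n-1}$. By formula \eqref{xcyc2}, the $i$-th coordinate of $c_k$ is $\zeta^{2^{n-i}k}$, so if I set $\omega_i:=\zeta^{2^{n-i}}$, the $n\times n$ matrix $M$ whose rows are $c_0,c_1,\ldots,c_{n-1}$ has $(k,i)$-entry $\omega_i^{\,k}$, i.e.\ it is the transpose of a Vandermonde matrix in the nodes $\omega_1,\ldots,\omega_n$. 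Its determinant therefore equals
$$
\det M \;=\; \prod_{1\le i<j\le n}(\omega_j-\omega_i).
$$

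The only non-formal step, and the one I view as the main (if mild) obstacle, is to check that $\omega_1,\ldots,\omega_n$ are pairwise distinct. Since $\zeta$ is a primitive $(2^n-1)$-th root of unity, this amounts to verifying that $2$ has multiplicative order exactly $n$ modulo $2^n-1$. That is immediate: on the one hand $2^n\equiv 1\pmod{2^n-1}$, and on the other, for $0<m<n$ one has $0<2^m-1<2^n-1$, so $2^n-1\nmid 2^m-1$ and hence $2^m\not\equiv 1\pmod{2^n-1}$. Consequently the exponents $2^{n-1},2^{n-2},\ldots,2^{0}$ are pairwise incongruent modulo $2^n-1$, and the $\omega_i$ are distinct.

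It follows that $\det M\neq 0$, the vectors $c_0,c_1,\ldots,c_{n-1}$ are linearly independent in $\Field^n$, and
$$
\Span{\Idm(\Field^n,\bullet_\tau)}\;\supseteq\;\Span{\{c_0,\ldots,c_{n-1}\}}\;=\;\Field^n,
$$
which is the required equality. The admissibility of $\Field$ (Definition~\ref{def:adm}) guarantees that the relevant $\zeta$ lies in $\Field$, so the whole computation stays inside the ground field. I do not anticipate any genuine difficulty: the proof reduces to a Vandermonde determinant plus a one-line order-of-$2$ computation. An alternative operator-theoretic route through Lemma~\ref{lemzero}, using that $\Span{\Idm}$ is an invariant subalgebra for every $L_{\bullet_\tau}(c_k)$, looks feasible but strikes me as more indirect than the Vandermonde argument above.
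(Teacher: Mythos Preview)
Your argument is correct. The Vandermonde computation is clean, and the verification that the exponents $2^0,2^1,\ldots,2^{n-1}$ are pairwise distinct modulo $2^n-1$ is unproblematic (indeed, they are distinct integers in $\{1,\ldots,2^{n-1}\}$ with $2^{n-1}<2^n-1$ for $n\ge2$, so no reduction occurs at all).

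Your route differs genuinely from the paper's. The paper argues operator-theoretically: for each $x\in\Field^n$ and each idempotent $c_k$ it forms the averaged element $z=\sum_{s=0}^{n-1}(L_{\bullet_\tau}(c_k))^s x$, observes via \eqref{Ln} that $z$ lies in the $1$-eigenspace of $L_{\bullet_\tau}(c_k)$, hence $z=\mu_k c_k$ for some scalar $\mu_k$, and then sums over $k$ and invokes Lemma~\ref{lemzero} to cancel all terms with $s\ge1$, leaving $(2^n-1)x=\sum_k\mu_k c_k$. Your approach is more direct and more elementary: it singles out an explicit basis $c_0,\ldots,c_{n-1}$ among the idempotents via a Vandermonde determinant, and in particular it makes Lemma~\ref{lemzero} (which in the paper is proved solely for this corollary) unnecessary. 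The paper's argument, on the other hand, yields the slightly stronger qualitative statement that every vector is a combination of idempotents with coefficients obtained from the spectral averaging, tying the result to the eigen-structure established in Proposition~\ref{pro:eigentau}; but for the bare spanning statement your Vandermonde argument is the shorter path.
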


\begin{proof}
Let $x\in \Field^n$ and $z:=\sum_{s=0}^{n-1}(L_{\bullet_\tau}(c_k))^s x$. Then using \eqref{Ln} we find
$$
L_{\bullet_\tau}z=\sum_{s=1}^{n}(L_{\bullet_\tau}(c_k))^s x=\sum_{s=0}^{n-1}(L_{\bullet_\tau}(c_k))^s x=z,
$$
therefore $z$ is an eigenvector of $L_{\bullet_\tau}(c_k)$ with eigenvalue $1$, therefore by Proposition~\ref{pro:eigentau}, $z\in \Span{c_{k}}$. This yields $\sum_{s=0}^{n-1}(L_{\bullet_\tau}(c_k))^s x=\mu_kc_k$ for some $\mu_k\in \Field$. Summing up the obtained identities and applying Lemma~\ref{lemzero}, we get
\begin{align*}
\sum_{k=1}^{2^n-1}\mu_kc_k&=\sum_{k=1}^{2^n-1}\sum_{s=0}^{n-1}(L_{\bullet_\tau}(c_k))^s x\\
&=\sum_{s=0}^{n-1}\sum_{k=1}^{2^n-1}(L_{\bullet_\tau}(c_k))^s x\\
&=\sum_{k=1}^{2^n-1} x+\sum_{s=1}^{n-1}\sum_{k=1}^{2^n-1}(L_{\bullet_\tau}(c_k))^s x\\
&=(2^n-1)x,
\end{align*}
and since $2^n-1\ne0 $ in $\Field$, we arrive at the desired conclusion.
\end{proof}

\begin{theorem}
$\Alg:=(\Field^{n} ,\bullet_\tau)$ is an axial algebra with the cyclic fusion law
\begin{equation}\label{etarule}
\Alg_{\epsilon^{p}}(c_k)\ast \Alg_{\epsilon^{q}}(c_k)= \Alg_{\epsilon^{p+q}}(c_k), \quad \forall p,q\in \mathbb{Z}_n.
\end{equation}
\end{theorem}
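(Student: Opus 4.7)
The plan is to verify the three requirements of Definition~\ref{def:ax}: generation by a set of axes, semisimplicity of each axis with spectrum in $\mathcal{F}=\{\epsilon^p:p\in\mathbb{Z}_n\}$, and the claimed fusion rule. I would take $X=\Idm(\Field^n,\bullet_\tau)$; then the generation condition $\Span{X}=\Alg$ is exactly Corollary~\ref{coraxial}. Semisimplicity of each $c_k$ together with the eigenspace decomposition $\Alg=\bigoplus_{p=1}^n \Alg_{\epsilon^p}(c_k)$, with each summand equal to the line $\Span{\eta_{k,p}}$, is the content of Proposition~\ref{pro:eigentau}.

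For the fusion rule I would exploit that $(\Alg,\bullet_\tau)$ is medial by Proposition~\ref{pro:medial}, so by \ref{ideal1} in Proposition~\ref{pro:idem} the operator $L_{\bullet_\tau}(c_k)$ is itself an algebra endomorphism of $(\Alg,\bullet_\tau)$. Hence for any $x\in \Alg_{\epsilon^p}(c_k)$ and $y\in \Alg_{\epsilon^q}(c_k)$,
\[
L_{\bullet_\tau}(c_k)(x\bullet_\tau y)=L_{\bullet_\tau}(c_k)(x)\bullet_\tau L_{\bullet_\tau}(c_k)(y)=\epsilon^{p+q}(x\bullet_\tau y),
\]
which immediately gives the inclusion $\Alg_{\epsilon^p}(c_k)\bullet_\tau \Alg_{\epsilon^q}(c_k)\subseteq \Alg_{\epsilon^{p+q}}(c_k)$.

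For the reverse inclusion, I would use that every eigenspace $\Alg_{\epsilon^p}(c_k)$ is one-dimensional (Proposition~\ref{pro:eigentau}), so it suffices to check that $\eta_{k,p}\bullet_\tau \eta_{k,q}\ne 0$. From the explicit formula~\eqref{eigen1}, every coordinate of $\eta_{k,p}$ is a nonzero product of powers of $\epsilon$ and $\zeta$; since $\bullet$ is coordinate-wise and $\tau$ only permutes coordinates, $\eta_{k,p}\bullet_\tau \eta_{k,q}=\tau(\eta_{k,p}\bullet \eta_{k,q})$ has no zero entries and is therefore a nonzero element of the line $\Alg_{\epsilon^{p+q}}(c_k)$, forcing equality in \eqref{etarule}.

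The only mildly subtle point is the non-vanishing of $\eta_{k,p}\bullet_\tau \eta_{k,q}$ required for the reverse inclusion, but this is immediate from the explicit description of the eigenvectors in~\eqref{eigen1}, so no serious obstacle arises. As a by-product, the identity $\dim \Alg_1(c_k)=\dim \Alg_{\epsilon^n}(c_k)=1$ shows that $(\Field^n,\bullet_\tau)$ is in addition a \emph{primitive} axial algebra in the sense of Definition~\ref{def:ax}.
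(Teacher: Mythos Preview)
Your argument is correct. The generation and semisimplicity parts coincide with the paper's proof, which likewise invokes Corollary~\ref{coraxial} and Proposition~\ref{pro:eigentau}.

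For the fusion rule you take a slightly different path. The paper argues entirely by direct computation with the explicit eigenvectors: it multiplies the coordinates in~\eqref{eigen1} to obtain $\eta_{k,p}\bullet_\tau \eta_{k,q}=\epsilon^{p+q}\zeta^{-2^{n-1}k}\,\eta_{k,p+q}$, which yields both the inclusion and the non-vanishing at once. Your version instead exploits the medial structure abstractly---using that $L_{\bullet_\tau}(c_k)$ is an algebra endomorphism (Proposition~\ref{pro:idem}\ref{ideal1}) to get the inclusion $\Alg_{\epsilon^p}(c_k)\bullet_\tau\Alg_{\epsilon^q}(c_k)\subseteq\Alg_{\epsilon^{p+q}}(c_k)$ without any calculation---and then appeals to the explicit form of $\eta_{k,p}$ only to verify non-vanishing. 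This is a bit more conceptual: the inclusion step would work verbatim for any special medial algebra with simple spectrum, while the paper's computation is tied to the concrete model. The trade-off is that the paper's single computation also produces the exact scalar relating $\eta_{k,p}\bullet_\tau\eta_{k,q}$ to $\eta_{k,p+q}$, which your argument does not need but which is occasionally useful.
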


\begin{proof}
By Corollary~\ref{coraxial}, $\Alg$ is spanned by the set of nonzero idempotents $\Idm(\Alg)$ and by Proposition~\ref{pro:eigentau} all nonzero idempotents in $(\Field^{n} ,\bullet_\tau)$ have the same spectrum $\epsilon,\epsilon^2,\ldots, \epsilon^n $, each eigenvalue $\epsilon^p$ is simple, and the eigen-decomposition \eqref{eigentau} holds. Moreover, applying \eqref{eigen1} we obtain for the corresponding eigenvectors
\begin{align*}
\eta_{k,p}\bullet_\tau \eta_{k,q}&=(\epsilon^{p+q} \zeta^{-2^{m-1}k},\epsilon^{2(p+q)} \zeta^{-(2^{m-1}+2^{m-2})k},\ldots)=\epsilon^{p+q}\zeta^{-2^{m-1}k} \eta_{k,p+q},
\end{align*}
which gives  $\Span{\eta_{k,p}}\bullet_\tau \Span{\eta_{k,q}}=\Span{\eta_{k,p+q}}$ and thereby implies the fusion law \eqref{etarule}.
\end{proof}

Now  we are ready to formulate our main result for the  case of an arbitrary $\sigma\in S_n$. Combining the above results  with Proposition~\ref{prB} and Proposition~\ref{pro:decompose} we arrive at the following general conclusion:

\begin{theorem}\label{thB}
Let a permutation $\sigma\in S_n$ have the disjoint cycle decomposition $\sigma=\sigma_1\ldots \sigma_r$ and a field $\Field$ be $\sigma$-{admissible}. Then the following properties hold:
\begin{itemize}
\item[(a)]
There are exactly $2^n$ distinct regular idempotents in $(\Field^{n} ,\bullet_\sigma)$ naturally stratified in $2^r$ classes $I_\alpha$, enumerated by binary codes $\alpha\in \mathbb{F}_2^{ r}$.
\item[(b)]
For each $\alpha\in \mathbb{F}_2^{ r}$, all idempotents in $I_\alpha$ have the same spectrum. More precisely,
\begin{equation}\label{char5}
\det(\lambda\mathds{1}-L_{\bullet_\tau}(c))=\prod_{i=1}^r(\lambda^{|\sigma_i|}-\alpha(i)), \qquad \forall c\in I_\alpha.
\end{equation}
\item[(c)]
The algebra $(\Field^{n} ,\bullet_\sigma)$ is generic.

\end{itemize}
\end{theorem}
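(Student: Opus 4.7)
The plan is to reduce Theorem~\ref{thB} to the single-cycle case analyzed in Section~\ref{sec:single} via the direct-sum decomposition from Proposition~\ref{prB}, and then invoke the componentwise description of idempotents from Proposition~\ref{pro:decompose}. Every claim should then follow by routine bookkeeping, so no step presents a genuine obstacle.

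First, I would invoke Proposition~\ref{prB} to write
$$(\Field^n,\bullet_\sigma)\;\cong\;\bigoplus_{j=1}^r (\Field^{|\sigma_j|},\bullet_{\sigma_j})$$
as a direct sum of pairwise orthogonal ideals. By Proposition~\ref{pro:decompose}, every idempotent $c$ of the direct sum decomposes uniquely as $c=\sum_{j=1}^r c_j$ with $c_j$ an idempotent of $(\Field^{|\sigma_j|},\bullet_{\sigma_j})$ (possibly zero), and
$$\det(\lambda\mathds{1}-L_{\bullet_\sigma}(c))\;=\;\prod_{j=1}^r \det(\lambda\mathds{1}-L_{\bullet_{\sigma_j}}(c_j)).$$

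Next, I would define the stratification: to each idempotent $c$ attach its support pattern $\alpha\in\mathbb{F}_2^r$ by $\alpha(j)=1$ iff $c_j\neq 0$, and let $I_\alpha$ denote the corresponding class. Proposition~\ref{pro:tau} produces exactly $2^{|\sigma_j|}-1$ nonzero idempotents in the $j$-th summand, so
$$|I_\alpha|\;=\;\prod_{j\colon \alpha(j)=1}(2^{|\sigma_j|}-1),\qquad \sum_{\alpha\in \mathbb{F}_2^r}|I_\alpha|\;=\;\prod_{j=1}^r 2^{|\sigma_j|}\;=\;2^n,$$
which gives (a). For (b), Proposition~\ref{pro:eigentau} identifies the $j$-th factor in the displayed determinant with $\lambda^{|\sigma_j|}-1$ when $c_j\neq 0$ and with $\lambda^{|\sigma_j|}$ when $c_j=0$; these two cases are packaged uniformly as $\lambda^{|\sigma_j|}-\alpha(j)$, producing \eqref{char5}.

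Finally, (c) follows at once from the last sentence of Proposition~\ref{pro:decompose}, combined with the genericity of each summand $(\Field^{|\sigma_j|},\bullet_{\sigma_j})$ already established in Proposition~\ref{pro:eigentau}. The only subtlety worth a moment's care is to treat the zero idempotent of each summand on the same footing as the nonzero ones, which is precisely what the uniform factor $\lambda^{|\sigma_j|}-\alpha(j)$ encodes; everything else is a formal consequence of the preceding propositions, so I do not anticipate a serious obstacle.
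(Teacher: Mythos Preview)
Your proposal is correct and is exactly the argument the paper has in mind: immediately before Theorem~\ref{thB} the paper says it follows by ``combining the above results with Proposition~\ref{prB} and Proposition~\ref{pro:decompose}'', and your write-up carries out precisely this combination. The only cosmetic point is that you might make explicit that the zero component $c_j=0$ is also a regular idempotent (its characteristic polynomial being $\lambda^{|\sigma_j|}$), so that all $2^n$ idempotents, not just the nonzero ones, are regular.
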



\section{Automorphisms}\label{sec:automorph}
In order to describe the automorphism group for $(\Field^n, \bullet_\sigma)$, we recall some  definitions.
For two groups $G$, $H$, and an action $f: H \to \Aut(G)$, the corresponding \textit{semidirect product} $G\rtimes_f H$ is defined by the group multiplication on $G\times H$ given by $(g_1,h_1)(g_2,h_2)=(g_1f_{h_1}(g_2),h_1h_2).$ In particular, if $H=\Aut(G)$ with $f=\mathrm{id}$ then one obtains the classical notion of the \textit{holomorph} of a group $G$ is the semi-direct product $G\rtimes_\mathrm{id} \Aut(G)$ with the  multiplication given by
\begin{equation}\label{semidirect}
(g_1,\alpha_1)\cdot (g_2, \alpha_2)=( g_1 \alpha_1(g_2), \,\alpha_1 \alpha_2)
\end{equation}
The automorphism group of the additive cyclic group $\mathbb{Z}_N:=(\mathbb{Z}_N,+)$ is isomorphic to the multiplicative group $(\mathbb{Z}_N)^\times=(\mathbb{Z}_N^\times,\cdot)$ of integers modulo $N$ (the group of multiplicative units):
$$
\Aut(\mathbb{Z}_N)\cong (\mathbb{Z}_N)^\times,
$$
where we write for short
$$
\mathbb{Z}_k:=(\mathbb{Z}/k\mathbb{Z},+),\qquad (\mathbb{Z}_N)^\times=(\mathbb{Z}_N^\times,\cdot).
$$
The group $(\mathbb{Z}_N)^\times$ is not cyclic in general, but by the fundamental theorem of finite abelian groups, it is isomorphic to a direct product of cyclic groups of prime power orders. For our analysis the relevant case is when $N=2^n-1$, $n\in \mathbb{Z}^+$.
Then $(\mathbb{Z}_N)^\times$ is the direct product of the groups corresponding to each of the (odd) prime power factors $N=p_1^{k_1} \ldots p_s^{k_s}$:
$$
(\mathbb{Z}_N)^\times=(\mathbb{Z}_{p_1^{k_1}})^\times\times \ldots \times (\mathbb{Z}_{p_s^{k_s}})^\times\cong
C_{p_1^{k_1}-p_1^{k_1-1}}\times \ldots
C_{p_s^{k_s}-p_s^{k_s-1}},
$$
where $C_m$ denote a cyclic group of order $m$. For example, for $n=6$, $B=2^6-1=63=3^2\cdot 7$, hence
$$
(\mathbb{Z}_{63})^\times=C_6\times C_6.
$$

A relevant in the present context is the general affine group of $\mathbb{Z}_N$ which is isomorphic to the holomorph of $\mathbb{Z}_N$:
\begin{equation}\label{affine}
\begin{split}
\mathrm{Aff}(\mathbb{Z}_N)&\cong \mathbb{Z}_N\rtimes_\mathrm{id} (\mathbb{Z}_N)^\times\\
&\cong \{\left(
  \begin{array}{cc}
    m & k \\
    0 & 1 \\
  \end{array}
\right): \,  m\in \mathbb{Z}_N^\times , \,\,k\in \mathbb{Z}_N\}\\
&\cong \{\psi_{m,k}(i)=mi+k: \,  m\in \mathbb{Z}_N^\times , \,\,k\in \mathbb{Z}_N\},
\end{split}
\end{equation}
where the last line is the group of compositions of affine functions $\psi_{m,k}:\mathbb{Z}_N\to \mathbb{Z}_N$,
\begin{equation}\label{composition}
\psi_{m,k}\circ \psi_{m',k'}=\psi_{mm',mk'+k}.
\end{equation}
The exponential map
\begin{equation}\label{deltadef}
\delta(i):=2^i, \quad \delta:(\mathbb{Z}_{n},+)\to (\mathbb{Z}_{2^n-1}^\times,\cdot)\cong \Aut((\mathbb{Z}_{2^n-1},+)),
\end{equation}
is a  well-defined injective group homomorphism, hence there holds the following exact sequence of abelian groups:
\begin{equation}\label{mudef}
0\longmapsto (\mathbb{Z}_{n},+) \stackrel{\delta}{\longmapsto} (\mathbb{Z}_{2^n-1}^\times,\cdot)\stackrel{\mu}{\longmapsto}(\mathbb{Z}_{2^n-1}^\times,\cdot)/(\mathbb{Z}_{n},+)\longmapsto0.
\end{equation}
Denote
$$
\Delta_n=\im \delta=\{1,2,\ldots,2^{n-1}\}\subset \mathbb{Z}_{2^n-1}.
$$
Note that $\Delta_n\cong (\mathbb{Z}_{n},+)\cong C_n$ is a multiplicative subgroup of $(\mathbb{Z}_{2^n-1})^\times$.
We shall also need the semi-direct product
\begin{equation}\label{delta2}
(\mathbb{Z}_{2^n-1},+)\rtimes_\delta (\mathbb{Z}_{n},+)\cong
\{\psi_{2^q,k}(i): \,  q,k\in \mathbb{Z}_n\}
\end{equation}

\begin{remark}\label{rem:aut}
Notice that  any algebra automorphism stabilizes the algebra idempotents. Therefore the automorphism group of an algebra $\Alg$ is a  subgroup of the group of symmetries of nonzero idempotents of $\Alg$. As above, it suffices to consider the case when  $\sigma=\tau\in S_n$ is a single cycle element(the right shift permutation). In this case, Proposition~\ref{promagma} yields that  the set of nonzero idempotents is a multiplicative quasigroup. Below we completely characterize its automorphism group.
\end{remark}

\begin{theorem}\label{th:quasi}
Let $\tau=(2\,3\, \ldots  \,n\,1)\in S_m$ be the right cyclic shift. Then the idempotent quasigroup is
\begin{equation}\label{aut10}
\Aut(\Idm(\Field^{n} ,\bullet_\tau))\cong \mathbb{Z}_{2^n-1}\rtimes_{\mathrm{id}} \mathbb{Z}_{2^n-1}^\times\cong\mathrm{Aff}(\mathbb{Z}_{2^n-1}).
\end{equation}
\end{theorem}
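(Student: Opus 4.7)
The plan is to transfer the problem via the quasigroup isomorphism $\Idm(\Field^n,\bullet_\tau)\cong(\mathbb{Z}_{N},\circledast)$, with $N=2^n-1$, established in Proposition~\ref{pro:tau}--\ref{promagma}, so it suffices to prove that $\Aut(\mathbb{Z}_N,\circledast)\cong\mathrm{Aff}(\mathbb{Z}_N)$, where $x\circledast y=2^{n-1}(x+y)$. The target group is the one realized in \eqref{affine} as compositions $\psi_{m,k}(i)=mi+k$ with $m\in\mathbb{Z}_N^\times$, $k\in\mathbb{Z}_N$, with the group law \eqref{composition}, so the whole argument reduces to showing the equality of sets $\Aut(\mathbb{Z}_N,\circledast)=\{\psi_{m,k}\}$.

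First I would check the easy inclusion $\mathrm{Aff}(\mathbb{Z}_N)\subseteq \Aut(\mathbb{Z}_N,\circledast)$: given $\psi_{m,k}$,
\begin{align*}
\psi_{m,k}(i\circledast j)&=2^{n-1}m(i+j)+k,\\
\psi_{m,k}(i)\circledast\psi_{m,k}(j)&=2^{n-1}\bigl(mi+mj+2k\bigr)=2^{n-1}m(i+j)+2^{n}k,
\end{align*}
and these agree in $\mathbb{Z}_N$ because $2^n\equiv 1\pmod{N}$. The key point is that the constant term survives precisely thanks to this congruence; the same identity will drive the converse inclusion.

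For the reverse inclusion, let $\phi\in\Aut(\mathbb{Z}_N,\circledast)$ and set $k:=\phi(0)$, $\psi(x):=\phi(x)-k$. Applying $\phi$ to $x\circledast y$ and subtracting $k$ yields, after using $2^n\equiv 1$ to collapse the constant terms, the clean functional equation
$$
\psi\bigl(2^{n-1}(x+y)\bigr)=2^{n-1}\bigl(\psi(x)+\psi(y)\bigr),\qquad \psi(0)=0.
$$
Setting $y=0$ shows $\psi$ commutes with multiplication by $2^{n-1}$, which lets me rewrite the left-hand side as $2^{n-1}\psi(x+y)$. Since $\gcd(2^{n-1},2^n-1)=1$, multiplication by $2^{n-1}$ is a bijection of $\mathbb{Z}_N$, so it can be cancelled to give Cauchy's equation $\psi(x+y)=\psi(x)+\psi(y)$. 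Hence $\psi(x)=mx$ for some $m\in\mathbb{Z}_N$, and $m\in\mathbb{Z}_N^\times$ because $\phi$ (and thus $\psi$) is a bijection; therefore $\phi=\psi_{m,k}\in\mathrm{Aff}(\mathbb{Z}_N)$.

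Finally, the group structure identifies automatically: the composition rule \eqref{composition} exhibits $\{\psi_{m,k}\}$ as the semi-direct product $\mathbb{Z}_N\rtimes_{\mathrm{id}}\mathbb{Z}_N^\times$, which is precisely the holomorph of $\mathbb{Z}_N$ and the general affine group $\mathrm{Aff}(\mathbb{Z}_N)$ as in \eqref{affine}. The main obstacle I anticipate is the converse direction: one must resist the temptation to set $y=x$ (which only gives the tautology $\psi(2^n x)=2^n\psi(x)$, i.e., $\psi=\psi$) and instead exploit setting $y=0$ together with the coprimality of $2^{n-1}$ and $N=2^n-1$ to cancel and recover additivity. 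Everything else is bookkeeping with the congruence $2^n\equiv 1\pmod{N}$.
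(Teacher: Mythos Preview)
Your proposal is correct. Both you and the paper transfer to the quasigroup $(\mathbb{Z}_N,\circledast)$ with $N=2^n-1$ and verify the inclusion $\mathrm{Aff}(\mathbb{Z}_N)\subseteq\Aut(\mathbb{Z}_N,\circledast)$ by the same direct computation. The difference lies in the converse direction. The paper keeps the raw functional equation $h(2^{n-1}(i+j))=2^{n-1}(h(i)+h(j))$, specializes to $i=j+2$ so that $2^{n-1}(i+j)=j+1$ in $\mathbb{Z}_N$, multiplies by $2$ to clear $2^{n-1}$, and obtains the second-difference relation $h(j+2)-h(j+1)=h(j+1)-h(j)$; this forces $h$ to have constant increments, hence to be affine. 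Your route is instead to normalize by subtracting $k=\phi(0)$, use $y=0$ to see that $\psi$ commutes with multiplication by the unit $2^{n-1}$, and then cancel that unit to land on Cauchy's equation $\psi(x+y)=\psi(x)+\psi(y)$. Your argument is slightly more conceptual (it isolates exactly the two ingredients needed: $2^n\equiv 1$ and $\gcd(2^{n-1},N)=1$) and generalizes verbatim to any quasigroup of the form $x\circledast y=u(x+y)$ with $u\in\mathbb{Z}_N^\times$ of order dividing something irrelevant here; the paper's finite-difference trick is more hands-on but equally short. Either way the conclusion and the identification with $\mathbb{Z}_N\rtimes_{\mathrm{id}}\mathbb{Z}_N^\times$ via \eqref{composition} are the same.
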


\begin{proof}
Given a pair $m\in \mathbb{Z}_{2^n-1}^\times$ and $k\in \mathbb{Z}_{2^n-1}$, we define a map $\psi_{m,k}(c_i):=c_{mi+k}$, $i\in \mathbb{Z}_{2^n-1}$, of
$\Idm(\Field^{n} ,\bullet_\tau))$ to itself.
Then for any pair $i,j\in \mathbb{Z}_{2^n-1}$ we obtain using \eqref{xcyc3}--\eqref{xcyc4}
\begin{align*}
\psi_{m,k}(c_i\bullet_\tau c_j)&=\psi_{m,k}(c_{{2^{n-1}}(i+j)})=c_{{2^{n-1}}m(i+j)+k}=c_{{2^{n-1}}m(i+j)+{2^{n}}k}\\
&=c_{{2^{n-1}}(mi+k+mj+k)}=c_{mi+k}\bullet_\tau c_{mj+k}\\
&=\psi_{m,k}(c_i)\bullet_\tau \psi_{m,k}(c_j)
\end{align*}
i.e. $\psi_{m,k}\in \Aut(\Idm(\Field^{n} ,\bullet_\tau))$.

In the converse direction, if $g\in \Aut(\Idm(\Field^n ,\bullet_\tau))$ then $g(c_i)=c_{h(i)}$ for some bijection $h:\mathbb{Z}_{2^n-1}\to \mathbb{Z}_{2^n-1}$, hence for any pair $i,j\in \mathbb{Z}_{2^n-1}$
$$
c_{h({2^{n-1}}(i+j))}=g(c_{{2^{n-1}}(i+j)})=g(c_i\bullet_\tau c_j)=g(c_i)\bullet_\tau g(c_j)=c_{{2^{n-1}}(h(i)+h(j))},
$$
implying
$
h({2^{n-1}}(i+j))= {2^{n-1}}h(i)+{2^{n-1}}h(j).
$
Multiplying this  by $2$ and setting $i=j+2$ yields in view of $2^n=1$ in $\mathbb{Z}_{2^n-1}$ that
$$
2h(2^{n}j+2^{n})= 2h(j+1)= h(j+2)+h(j),
$$
hence
\begin{equation}\label{kdef1}
h(j+2)-h(j+1)= h(j+1)-h(j)= \ldots = h(2)-h(1)=:m.
\end{equation}
Since $h$ is an injection, $m\ne 0$ in $\mathbb{Z}_{2^n-1}$, thus $m\in \mathbb{Z}_{2^n-1}^\times$. Therefore \eqref{kdef1} implies $h(j)= m\cdot(j-1)+h(1)= mj+k$, where $k:=h(1)-m\in \mathbb{Z}_{2^n-1}$. This yields   $g=\psi_{m,k}$ and
\eqref{affine} establishes the desired isomorphisms in \eqref{aut10}.

\end{proof}

Now we consider the automorphism group of the ambient algebra $(\Field^n ,\bullet_\tau)$. Recall that by Remark~\ref{rem:aut}, $\Aut(\Field^n ,\bullet_\tau)$ is a subgroup of $\Aut(\Idm(\Field^n ,\bullet_\tau))$. A part of $\Aut(\Field^n ,\bullet_\tau)$ can be identified by the definitions. More precisely, let $k,q\in \mathbb{Z}_n$ and consider the maps given by
\begin{align*}
\alpha_q(x_1,x_2,\ldots,x_n)&=(x_{1-q},x_{2-q},\ldots,x_{n-q})\\
\beta_k(x_1,x_2,\ldots,x_n)&=(\zeta^{2^{n-1}k}x_1,\zeta^{2^{n-2}k}x_2,\ldots,\zeta^{k}x_n)
\end{align*}
where $\alpha_q$ is the right cyclic shift of the coordinates (understood as elements of $\mathbb{Z}/n\mathbb{Z}$), for example, $\alpha_1(x)=(x_n,x_1,x_2,\ldots,x_{n-1})$ etc.

\begin{lemma}\label{lem:auto}
In the notation of \eqref{affine},
\begin{align}
\alpha_q&=\psi_{2^q,0}\label{alpha}\in \Aut(\Field^n ,\bullet_\tau)\\
\beta_k&=\psi_{1,k}\label{beta}\in \Aut(\Field^n ,\bullet_\tau)
\end{align}
Furthermore, $\langle \alpha_1,\beta_1\rangle \cong \mathbb{Z}_{2^n-1}\rtimes_\delta \mathbb{Z}_{n}$, where $\delta$ is defined in \eqref{deltadef}.
\end{lemma}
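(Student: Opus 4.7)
The plan is to establish \eqref{alpha}--\eqref{beta} in two stages — first identifying the action of $\alpha_q$ and $\beta_k$ on the explicit idempotents $c_i$ of Proposition~\ref{pro:tau} (which, via Theorem~\ref{th:quasi}, pins down the corresponding elements of $\mathrm{Aff}(\mathbb{Z}_{2^n-1})$), and then verifying directly that $\alpha_q$ and $\beta_k$ are $\bullet_\tau$-multiplicative on all of $\Field^n$. The semidirect product structure is then extracted from the composition rule \eqref{composition}.

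For the first stage, using $c_i = (\zeta^{2^{n-1}i}, \zeta^{2^{n-2}i}, \ldots, \zeta^i)$, I compute coordinatewise
\begin{equation*}
(\alpha_q(c_i))_j = (c_i)_{j-q} = \zeta^{2^{n-j+q}i} = \zeta^{2^{n-j}\cdot(2^q i)}, \qquad (\beta_k(c_i))_j = \zeta^{2^{n-j}k}\cdot \zeta^{2^{n-j}i} = \zeta^{2^{n-j}(i+k)},
\end{equation*}
so that $\alpha_q(c_i) = c_{2^q i}$ and $\beta_k(c_i) = c_{i+k}$, matching $\psi_{2^q,0}$ and $\psi_{1,k}$ respectively. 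For the second stage, the defining relation $x \bullet_\tau y = \tau(x \bullet y)$ gives $(x \bullet_\tau y)_j = x_{j+1} y_{j+1}$ with indices cyclic in $\mathbb{Z}_n$, and then
\begin{align*}
(\alpha_q(x) \bullet_\tau \alpha_q(y))_j &= x_{j+1-q}\, y_{j+1-q} = (x \bullet_\tau y)_{j-q} = (\alpha_q(x \bullet_\tau y))_j,\\
(\beta_k(x) \bullet_\tau \beta_k(y))_j &= \zeta^{2\cdot 2^{n-j-1}k}\, x_{j+1} y_{j+1} = \zeta^{2^{n-j}k}(x \bullet_\tau y)_j = (\beta_k(x \bullet_\tau y))_j.
\end{align*}
This shows $\alpha_q, \beta_k \in \Aut(\Field^n, \bullet_\tau)$ and completes \eqref{alpha}--\eqref{beta}.

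For the subgroup $\langle \alpha_1, \beta_1\rangle$, iterating \eqref{composition} yields $\alpha_1^q = \psi_{2^q, 0}$ and $\beta_1^k = \psi_{1,k}$; in particular $\alpha_1$ has order $n$ (since $2^n \equiv 1 \bmod 2^n-1$) and $\beta_1$ has order $2^n-1$. A further application of \eqref{composition} gives
\begin{equation*}
\alpha_1 \beta_1 \alpha_1^{-1} = \psi_{2,0} \circ \psi_{1,1} \circ \psi_{2^{n-1},0} = \psi_{1,2} = \beta_1^2,
\end{equation*}
so conjugation by $\alpha_1$ acts on $\langle \beta_1\rangle \cong \mathbb{Z}_{2^n-1}$ as multiplication by $2$, which is precisely $\delta(1)$ with $\delta$ as in \eqref{deltadef}. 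Combined with the orders, this identifies $\langle \alpha_1, \beta_1\rangle$ with $\mathbb{Z}_{2^n-1} \rtimes_\delta \mathbb{Z}_n$ in the description \eqref{delta2}.

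The calculation is largely mechanical; the only point requiring care is the coexistence of two modular conventions — indices modulo $n$ for the coordinate shift versus modulo $2^n-1$ for the idempotent labels — together with the identity $2 \cdot 2^{n-1} \equiv 1 \pmod{2^n-1}$, which makes $2$ a unit in $\mathbb{Z}_{2^n-1}$.
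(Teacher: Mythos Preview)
Your proof is correct and follows essentially the same approach as the paper's. The paper also verifies multiplicativity of $\alpha_q$ and $\beta_k$ by a direct coordinate computation (somewhat more terse for $\alpha_q$), then applies the definitions to the explicit idempotents \eqref{xcyc2} to obtain \eqref{alpha}--\eqref{beta}, and finally appeals to \eqref{delta2} for the semidirect product; you simply reverse the order of the first two steps and make the conjugation $\alpha_1\beta_1\alpha_1^{-1}=\beta_1^2$ explicit rather than citing \eqref{delta2}, which is a purely presentational difference.
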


\begin{proof}
By their definitions, both $\alpha_q$ and $\beta_k$ are linear isomorphisms of $\Field^n$. An easy verification implies the relation $\alpha_q(x\bullet_\tau y)=\alpha_q(x)\bullet_\tau \alpha(y)$. Furthermore,
\begin{align*}
\beta_k(x\bullet_\tau y)&=\beta((x_2y_2,\ldots, x_ny_n,x_1y_1))\\
&=(\zeta^{2^{n-1}k}x_2y_2,\ldots, \zeta^{2^{1}k}x_ny_n,\,\zeta^{2^{0}k}x_1y_1),
\end{align*}
and on the other hand,
\begin{align*}
\beta_k(x)\bullet_\tau \beta_k(y)&=(\zeta^{2^{n-2}k}x_2\cdot \zeta^{2^{n-2}k}y_2,\ldots,
\,\zeta^{2^{n-1}m}x_1\cdot \zeta^{2^{n-1}m}y_1).
\end{align*}
Comparing the obtained expressions yields $\beta_k(x\bullet_\tau y)=\beta_k(x)\bullet_\tau \beta_k(y)$, hence both $\alpha_q$ and $\beta_k$ are automorphisms of $(\Field^n ,\bullet_\tau)$. Applying the definitions to \eqref{xcyc2} implies \eqref{alpha} and \eqref{beta}. Furthermore, elements $ \alpha_1,\beta_1$ generate a subgroup in $\mathrm{Aff}(\mathbb{Z}_{2^n-1})$ consisting of all elements of the kind $\psi_{2^q,k}$, $q,k\in \mathbb{Z}_n$, which by virtue of \eqref{delta2} implies the last claim of the lemma.
\end{proof}

Recall that the cyclotomic polynomial $\Phi_{N}(z)$   is the unique irreducible polynomial with integer coefficients that is a divisor of $z^N-1$ and is not a divisor of
$z^{k}-1$ for any $k < N$. Its roots are all $N$th primitive roots of unity. It is well known that
\begin{equation}\label{prodphi}
\prod_{d|N}\Phi_d(z)=z^N-1.
\end{equation}

\begin{definition}
Let $n\ge2$ be an integer and
$$\Lambda_n(z):=z+z^2+z^4+\ldots +z^{2^{n-1}}.
$$
The number $n$ is said to be \textit{regular} if the cyclotomic polynomial $\Phi_{2^n-1}(z)$ does \textit{not} divide $\Lambda_n(z^{m})-\Lambda_n(z)$ for all  $m\in \mathbb{Z}_{2^n-1}^\times\setminus \Delta_n$.
\end{definition}

\begin{theorem}\label{the:auto}
If $n\ge2$ is a regular integer then $\Aut(\Field^n ,\bullet_\tau)\cong\mathbb{Z}_{2^n-1} \rtimes_{\mathrm{\delta}} \mathbb{Z}_n.$
\end{theorem}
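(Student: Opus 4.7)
The plan is to realize $\Aut(\Field^n,\bullet_\tau)$ as a subgroup of $\mathrm{Aff}(\mathbb{Z}_{2^n-1})$ via Theorem~\ref{th:quasi}, sandwich it between the subgroup $\mathbb{Z}_{2^n-1}\rtimes_\delta\mathbb{Z}_n$ supplied by Lemma~\ref{lem:auto} and a necessary condition extracted from a single $\Delta_n$-symmetric identity among the idempotents, and then invoke the regularity hypothesis to pin $m$ down.

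First, by Remark~\ref{rem:aut} every $g\in\Aut(\Field^n,\bullet_\tau)$ restricts to a quasigroup automorphism of $\Idm(\Field^n,\bullet_\tau)$, and by Theorem~\ref{th:quasi} this restriction has the form $g(c_i)=c_{mi+k}$ for a unique $\psi_{m,k}\in\mathrm{Aff}(\mathbb{Z}_{2^n-1})$; the assignment $g\mapsto\psi_{m,k}$ is injective since $\Idm(\Field^n,\bullet_\tau)$ spans $\Field^n$ by Corollary~\ref{coraxial}. Lemma~\ref{lem:auto} already realises every $\psi_{2^q,k}$ as an actual algebra automorphism and identifies the corresponding subgroup with $\mathbb{Z}_{2^n-1}\rtimes_\delta\mathbb{Z}_n$, so the entire problem reduces to the reverse inclusion: whenever $\psi_{m,k}$ comes from some $g\in\Aut(\Field^n,\bullet_\tau)$, $m$ must lie in $\Delta_n$.

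The heart of the argument is a single $\Delta_n$-symmetric identity. I would introduce
$$
u:=\sum_{s\in\Delta_n}c_s.
$$
Since $\Delta_n$ is a multiplicative subgroup of $(\mathbb{Z}_{2^n-1})^\times$ containing every exponent $2^{n-j}$ that appears in the $j$-th coordinate of $c_s$, a coordinate-wise computation collapses each component of $u$ to $\sum_{s\in\Delta_n}\zeta^s=\Lambda_n(\zeta)$, giving the clean identity $u=\Lambda_n(\zeta)c_0$. Applying $g$ and exploiting the same $\Delta_n$-invariance (this time with exponents multiplied by $m$) yields $g(u)=\sum_s c_{ms+k}=\Lambda_n(\zeta^m)c_k$, while $g(\Lambda_n(\zeta)c_0)=\Lambda_n(\zeta)c_k$ directly from $g(c_0)=c_k$. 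Comparing, the scalar identity $\Lambda_n(\zeta^m)=\Lambda_n(\zeta)$ must hold in $\Field$.

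Finally, since $\zeta$ is a primitive $(2^n-1)$-th root of unity, vanishing of $\Lambda_n(\zeta^m)-\Lambda_n(\zeta)$ is equivalent to the polynomial divisibility $\Phi_{2^n-1}(z)\mid\Lambda_n(z^m)-\Lambda_n(z)$, and the regularity of $n$ then forces $m\in\Delta_n$, i.e.\ $m=2^q$ for some $q\in\mathbb{Z}_n$, completing the identification $\Aut(\Field^n,\bullet_\tau)=\{\psi_{2^q,k}\}\cong\mathbb{Z}_{2^n-1}\rtimes_\delta\mathbb{Z}_n$. The main technical obstacle is spotting the right invariant ($u$) and executing the two $\Delta_n$-invariance calculations; once these are in place, the regularity hypothesis does precisely the work it was defined to do.
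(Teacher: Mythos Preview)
Your argument is essentially the paper's: embed $\Aut(\Field^n,\bullet_\tau)$ into $\mathrm{Aff}(\mathbb{Z}_{2^n-1})$ via the action on idempotents (using Corollary~\ref{coraxial} for injectivity), invoke Lemma~\ref{lem:auto} for one inclusion, and for the other extract a scalar constraint from a $\Delta_n$-symmetric sum of idempotents. The paper's version differs only in that it first reduces to $k=0$ by composing with a suitable $\beta$, and then works with the whole family $H_l:=\sum_{j=0}^{n-1}c_{2^jl}=\Lambda_n(\zeta^l)\,c_0$ (your $u$ is $H_1$), obtaining $\Lambda_n(\zeta^{ml})=\Lambda_n(\zeta^l)$ for \emph{every} $l\in\mathbb{Z}_{2^n-1}$.

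That said, your final step contains a small but genuine gap. The claim that vanishing of $\Lambda_n(\zeta^m)-\Lambda_n(\zeta)$ at the single chosen primitive root $\zeta$ is \emph{equivalent} to $\Phi_{2^n-1}(z)\mid\Lambda_n(z^m)-\Lambda_n(z)$ presupposes that $\Phi_{2^n-1}$ is the minimal polynomial of $\zeta$ over the prime subfield; this holds in characteristic~$0$ but can fail over $\mathbb{F}_p$, and Definition~\ref{def:adm} explicitly permits positive characteristic. The remedy is exactly what the paper does: run your computation for $u_l:=\sum_{s\in\Delta_n}c_{ls}$ for all $l$, giving $\Lambda_n(\zeta^{lm})=\Lambda_n(\zeta^l)$ at every primitive $(2^n-1)$-th root, whence the divisibility holds in $\Field[z]$ and the regularity hypothesis forces $m\in\Delta_n$.
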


\begin{proof}
An algebra automorphism stabilizes the set of nonzero idempotents,  inducing an  automorphism on the idempotent quasigroup. By Corollary~\ref{coraxial}, $\Span{\Idm(\Field^n, \bullet_\tau)}=\Field^n$, hence if some $f\in \Aut(\Field^n ,\bullet_\tau)$ stabilize each nonzero idempotent in $\Idm(\Field^n, \bullet_\tau)$ then $f=\mathds{1}$. This implies that
$\Aut(\Field^n ,\bullet_\tau)$ is a subgroup of $\Aut(\Idm(\Field^n ,\bullet_\tau)),$
in particular, any algebra automorphism has the form $\psi_{m,k}$, where $ m\in \mathbb{Z}_{2^n-1}^\times$, $k\in \mathbb{Z}_{2^n-1}$.

Therefore we need to identify only those $\psi_{k,m}\in \mathrm{Aff}(\mathbb{Z}_{2^n-1})$ which can be extended to a linear isomorphism of $\Field^n $. To this end, we note that since $m\in \mathbb{Z}_{2^n-1}^\times$, then $s:=m^{-1}k$ is well-defined and by Lemma~\ref{lem:auto} $\psi_{1,s}\in \Aut(\Field^n)$, therefore using \eqref{composition} we conclude that $\psi_{m,k}\circ \psi_{1,s}=\psi_{m,0}\in \Aut(\Field^n ,\bullet_\tau)$. In other words, we can assume without loss of generality that $k=0$.

So let us assume that $\psi_{m,0}\in \Aut(\Field^n ,\bullet_\tau)$.
By Lemma~\ref{lem:auto}, it suffices to  show that  $m\in \Delta_n=\{1,2,\ldots, 2^{n-1}\}$. We argue by contradiction and assume that $m\in \mathbb{Z}_{2^n-1}^\times\setminus \Delta_n$. Then by virtue of \eqref{xcyc2} we find $c_{2^n-1}=(1,1,\ldots,1)$
and also
\begin{equation}\label{Sm}
H_l:=c_l+c_{2l}+c_{4l}+\ldots+c_{2^{n-1}l}=\Lambda_n(\zeta^l)c_{2^n-1}, \qquad \forall l\in \mathbb{Z}_{2^n-1}.
\end{equation}
Since $\psi_{m,0}(c_i)=c_{mi}$ for any $i\in \mathbb{Z}_{2^n-1}$ (in particular,  $\psi_{m,0}(c_{2^n-1})=c_{2^n-1}$) we have $\psi_{m,0}(H_l)=H_{ml},$ implying by virtue of \eqref{Sm} that
$$
\Lambda_n(\zeta^l)=\Lambda_n(\zeta^{lm}), \quad \forall l\in \mathbb{Z}_{2^n-1}.
$$
Since the latter holds for any primitive root $\zeta$ of unity of order $2^n-1$, we conclude that by the definition,
the cyclotomic polynomial $\Phi_{2^n-1}(z)$ divides $\Lambda_n(z^{lm})-\Lambda_n(z^l)$ for any $l\in \mathbb{Z}_{2^n-1}$, in particular, for $l=1$, which implies that $n$ is not a regular integer, a contradiction.
\end{proof}

\begin{remark}
Conjecturally, all positive integer numbers are regular,  an application of Galois theory of cyclotomic polynomials would be helpful to establish this conjecture, but we now are not able to prove this conjecture in the full generality. There are however several particular cases when the verification can be easily done, for example, for any  Mersenne prime $2^n-1$, $n$ is a regular number. In practice,  a verification of that a given $n$ is regular can be done using the resultant, as it shown in the example below.
\end{remark}

\begin{example}\label{ex:n2}
Let $n=2$, then $\mathbb{Z}_{3}^\times\setminus \Delta_2=\emptyset$, hence $n=2$ is a regular integer, implying that
$$
\Aut(\Field^2 ,\bullet_\tau)\cong\mathbb{Z}_{3} \rtimes_{\mathrm{\delta}} \mathbb{Z}_2\cong S_{3}.
$$
The latter is the well-known fact that $S_3$ is an internal semi-direct product of the subgroups $\mathbb{Z}_3$ and $\mathbb{Z}_2$, where $\mathbb{Z}_3$ is the subgroup generated by one of the two 3-cycles and $\mathbb{Z}_2\cong C_2$ is the subgroup generated by any transposition.
\end{example}

\begin{example}\label{ex:n3}
Let $n=3$, then $\Lambda_3(z)=z+z^2+z^4$ and $\mathbb{Z}_{7}^\times\setminus \Delta_3=\{3,5,6\}$. An easy verification shows that the resultant
$$
R(\frac{\Lambda_3(z^q)-\Lambda_3(z)}{z(z-1)}, \,\Phi_{7}(z))=7^2\ne0, \qquad \forall q\in \{3,5,6\},
$$
hence $\Lambda_3(z^q)-\Lambda_3(z)$ does not have common divisors  with $\Phi_{7}(z)$ (note that $z=0,1$ cannot be common zeros). Therefore $n=3$ is a regular integer. Similarly, for   $\Lambda_4(z)=z+z^2+z^4+z^8$, $\mathbb{Z}_{15}^\times\setminus \Delta_3=\{7,11,13,14\}$ and
$$
R(\frac{\Lambda_4(z^q)-\Lambda_4(z)}{z(z-1)}, \,\Phi_{15}(z))=3^4\cdot 5^4\ne0, \qquad \forall q\in \Delta_3,
$$
\end{example}

Finally, we point out the following useful observation. Let $n\ge 2$ be an integer, and $\Phi_n(z)$ be the cyclotomic polynomial of degree $n$. Consider the quotient polynomial algebra
$$
\mathbb{T}_n:=(\Field[z]/\Phi_{2^n-1}(z),\bullet)\cong (\Field^{\phi(2^n-1)},\bullet)
$$
where $\phi$ is Euler's totient function. Then $\Aut(\mathbb{T}_n)\cong S_{\phi(2^n-1)}$, where any automorphism is a substitution $P(z)\to P(t_\alpha(z))$, with $\alpha\in S_n$ and $t_\alpha(z)$ is the  Lagrange polynomial of degree $\le \phi(2^n-1)$ uniquely determined the relations $t_\alpha(\zeta^k)=\zeta^{\alpha(k)}$ for any $k\in \mathbb{Z}_{2^n-1}^\times$, where $\zeta$ is a fixed primitive root of unity of degree $2^n-1$. Note that $h_m(z):=z^m\in \Aut(\mathbb{T}_n)$, where $m\in \mathbb{Z}_{2^n-1}^\times$ ($\mathbb{Z}_{2^n-1}^\times$ is an abelian subgroup of $S_{\phi(2^n-1)}$).

Now, let us consider $\Lambda_n(z)$ as an element in $\mathbb{T}_n$. By \eqref{prodphi}, $\Phi_{2^n-1}(z)|(z^{2^n-1}-1)$, hence
$$
\Lambda_n(z^2)-\Lambda_n(z)=z^{2^n}-z\equiv 0\mod \Phi_{2^n-1}(z),
$$
therefore $\Lambda_n(z^2)=\Lambda_n(z)$, in other words we conclude that

\begin{proposition}\label{pro:Lam}
$\Lambda_n$ is a fixed point of the natural action of $\Delta_n$ by substitutions. Moreover, the integer $n$ is regular if and only if the stabilizer subgroup of $\Lambda_n$ in $\mathbb{Z}_{2^n-1}^\times$ is exactly $\Delta_n$.
\end{proposition}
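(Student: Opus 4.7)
\medskip\noindent\textbf{Proof plan.}
My plan is to unwind both halves of the statement into a single assertion about when the substitution $h_m:z\mapsto z^m$ fixes $\Lambda_n$ in $\mathbb{T}_n$, and then to match this against the definition of a regular integer.

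First, I would verify the ``fixed point'' claim by a direct computation in $\mathbb{T}_n$. Since $\Phi_{2^n-1}(z)$ divides $z^{2^n-1}-1$ by \eqref{prodphi}, in $\mathbb{T}_n$ we have $z^{2^n}=z$, so
\begin{equation*}
\Lambda_n(z^2)=z^2+z^4+\ldots+z^{2^{n-1}}+z^{2^n}=z+z^2+\ldots+z^{2^{n-1}}=\Lambda_n(z).
\end{equation*}
The subgroup $\Delta_n\subset \mathbb{Z}_{2^n-1}^\times$ is cyclic, generated by $2$, and the $\mathbb{Z}_{2^n-1}^\times$-action on $\mathbb{T}_n$ satisfies $h_m\circ h_{m'}=h_{mm'}$; hence $h_2(\Lambda_n)=\Lambda_n$ already forces $h_{2^k}(\Lambda_n)=\Lambda_n$ for every $k$, giving the first assertion.

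Next, I would translate membership in the stabilizer of $\Lambda_n$ into a divisibility statement. For $m\in \mathbb{Z}_{2^n-1}^\times$, by definition $h_m(\Lambda_n)=\Lambda_n$ in $\mathbb{T}_n$ if and only if $\Lambda_n(z^m)\equiv \Lambda_n(z)\pmod{\Phi_{2^n-1}(z)}$, which in turn is equivalent to $\Phi_{2^n-1}(z)\mid \Lambda_n(z^m)-\Lambda_n(z)$ as polynomials in $\Field[z]$. Let $\mathrm{Stab}(\Lambda_n)$ denote the stabilizer in $\mathbb{Z}_{2^n-1}^\times$; the previous paragraph shows $\Delta_n\subseteq \mathrm{Stab}(\Lambda_n)$ unconditionally.

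Finally, I would read off the equivalence. By definition, $n$ is regular precisely when $\Phi_{2^n-1}(z)\nmid \Lambda_n(z^m)-\Lambda_n(z)$ for every $m\in \mathbb{Z}_{2^n-1}^\times\setminus \Delta_n$, which by the equivalence of the previous paragraph is the same as saying that no $m\in \mathbb{Z}_{2^n-1}^\times\setminus\Delta_n$ lies in $\mathrm{Stab}(\Lambda_n)$. Combined with $\Delta_n\subseteq \mathrm{Stab}(\Lambda_n)$, this gives $\mathrm{Stab}(\Lambda_n)=\Delta_n$, completing the proof in both directions.

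There is no genuine obstacle here: the argument is a direct translation between three equivalent formulations (a divisibility condition in $\Field[z]$, an identity in the quotient algebra $\mathbb{T}_n$, and a stabilizer condition in $\mathbb{Z}_{2^n-1}^\times$), and the only substantive calculation is the one-line verification $\Lambda_n(z^2)=\Lambda_n(z)$ modulo $\Phi_{2^n-1}(z)$.
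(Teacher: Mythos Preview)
Your proposal is correct and follows essentially the same route as the paper: the paper establishes $\Lambda_n(z^2)-\Lambda_n(z)=z^{2^n}-z\equiv 0\pmod{\Phi_{2^n-1}(z)}$ immediately before stating the proposition, and the second claim is then a direct restatement of the definition of a regular integer in stabilizer language. Your version is slightly more explicit (noting that $\Delta_n=\langle 2\rangle$ so fixing by $h_2$ suffices, and spelling out the divisibility $\Leftrightarrow$ stabilizer translation), but the argument is the same.
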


\section{Three examples for $n=3$}\label{sec:example}

The main goal of this section is to illustrate our results for $(\Field^{n} ,\bullet_\sigma)$ in the particular case $n=3$. We shall assume that $\Field$ is  splitting field of polynomial $P(z)=z^3-1$ and $\epsilon$ will denote a primitive root of unity of degree 3 (in section~\ref{subsec:3} we additionally assume that also a primitive root of unity of degree 7 exists).
By Theorem~\ref{thA},   there are exactly three distinct (isomorphy classes of) inner isotopes coded by the conjugacy classes of $S_3$, which are in a one-to-one correspondence with integer partitions of $3$, i.e.
\begin{equation}\label{part1}
\begin{split}
3&=1+1+1\\
&=2+1\\
&=3.
\end{split}
\end{equation}
Each of the  three-dimensional isotope algebras  will be considered below.

\subsection{The case $"1+1+1"$: a unital commutative associative algebra}\label{sec:single1}
In this case we have the trivial three cycle partition
$1+1+1$
which uniquely determines the unity in $S_3$: $e=(1)(2)(3)$ (in the cyclic notation), thus
$$
(\Field^3 ,\bullet_e)\cong (\Field^3 ,\bullet)
$$
i.e. the corresponding inner isotope is the associative (direct product) algebra $(\Field^3 ,\bullet)$ itself. The multiplication  structure is a uniquely determined by the multiplication table in the standard basis $\{e_1,e_2,e_3\}$ \eqref{standard}:
\begin{equation*}
e_i\bullet e_j=e_{i+j (\mathrm{mod}\,3)}.
\end{equation*}
The automorphism group is given by  Theorem~\ref{thA}
$$
\Aut(\Field^3 ,\bullet_e)\cong S_3.
$$
The algebra $(\Field^3 ,\bullet_e)$ is generic  and  any algebra idempotent can be written as $c_k=(\alpha_1,\alpha_2,\alpha_3)$, where $(\alpha_1,\alpha_2,\alpha_3)\in \mathbb{F}_2^{ 3}$ is the binary decomposition of $k$, $0\le k\le 7$.
For example, the binary codes $1=001_2$, $2=010_2$ and $4=100_2$ correspond to the three standard basis idempotents $e_3,e_2,e_1$. The idempotent $c_7=e_1+e_2+e_3$ is the algebra {unity}.
This yields the multiplication table (Table~\ref{tab2} below) and the characteristic polynomials $\chi_i$ of $L_{\bullet_e}(c_i)$ are given  by (cf. with \eqref{char2})
\begin{equation}\label{peirce2}
\begin{split}
\chi_0&=\lambda^3,\qquad
\chi_1=\chi_2=\chi_4=(\lambda-1)\lambda^2,\\
\chi_3=\chi_4=\chi_6&=(\lambda-1)^2\lambda,\qquad
\chi_7=(\lambda-1)^3.
\end{split}
\end{equation}

\begin{table}[ht]
\begin{center}
\begin{tabular}{c||ccccccc}
$\bullet_\tau$&1&2&3&4&5&6&7\\\hline\hline
1&1&0&1&0&1&0&1\\
2&0&2&2&0&0&2&2\\
3&1&2&3&0&1&2&3\\
4&0&0&0&4&4&4&4\\
5&1&0&1&4&5&4&5\\
6&0&2&2&4&4&6&6\\
7&1&2&3&4&5&6&7\\
\end{tabular}
\end{center}
\caption{The multiplication table of idempotents $i\sim c_i$ in  $(\Field^3 ,\bullet_e)$ with $e=(1)(2)(3)\in S_3$}\label{tab2}
\end{table}

Note that $(\Field^3 ,\bullet_e)$, as an associative algebra is axial in the sense of Definition~\ref{def:ax} above, it follows from the classical results due to Benjamin Peirce. Indeed, $(\Field^3 ,\bullet_e)=\Span{\{e_1,e_2,e_4\}}$, these idempotents are primitive and satisfye
 the same fusion law:
\begin{equation}\label{fusion0}
\begin{array}{r||rr}
 \bullet_e &\Alg_1& \Alg_0\\\hline\hline
\Alg_1&\Alg_1&\Alg_0\\
 \Alg_0&0 & \Alg_0 \\
\end{array}
\end{equation}
However, in contrast to the single cycle case (see section~\ref{subsec:3} below), the set of nonzero idempotents is \textit{not} a multiplicative magma, see table~\ref{tab2}.


\subsection{The single cycle case $"3"$: a commutative isospectral medial algebra}\label{subsec:3}
The one-cycle partition corresponds to the conjugacy class of $\tau=(2\,3\,1)\in S_3$, i.e.  shifts.  Such algebras have been introduced and studied first for $n=3$  in \cite{KrTk18a} in the context of isospectral algebras and later for any $n\ge2$ in \cite{KrTk23a} in the polynomial setting. It is natural to assume that the ground field $\Field$ additionally contains a primitive root of unity of order $7=2^3-1$, denote it by $\zeta$.

The standard basis elements are no longer idempotents, for example $e_1\bullet_\tau e_1=e_2$. By Proposition~\ref{pro:tau}, the algebra $(\Field^3 ,\bullet_\tau)$ is generic and contains 7 distinct nonzero idempotents, which can be explicitly written by
\begin{equation}\label{xcyc20}
\Idm(\Field^3 ,\bullet_\tau)=\{c_k=(\zeta^{4k},\zeta^{2k},\zeta^k):\quad k\in \mathbb{Z}/7\mathbb{Z}\}.
\end{equation}
The multiplication rule \eqref{xcyc3} between idempotents takes the form
\begin{equation}\label{xcyc30}
c_i\bullet_\tau c_j =c_{4(i+j)}=c_{i\circledast j},
\end{equation}
where $\circledast$ on $\mathbb{Z}/7\mathbb{Z}$ is defined by
\begin{equation}\label{xcyc40}
i\circledast j\equiv 4(i+j)\mod 7.
\end{equation}
The characteristic polynomials are given by
\begin{equation}\label{lamlam}
\det(\lambda\mathds{1}-L_{\bullet_\tau}(c_k))=\lambda^3-1.
\end{equation}
Therefore  $(\Field^3 ,\bullet_\tau)$ is \textbf{isospectral} and furthermore it is an \textbf{axial} algebra with the following {fusion law}:
\begin{equation}\label{fusion2}
\begin{array}{c||ccc}
 \bullet_\omega &\Alg_1& \Alg_{\epsilon} &\Alg_{\epsilon^2}\\\hline\hline
\Alg_1 &\Alg_1& \Alg_{\epsilon} &\Alg_{\epsilon^2}\\
\Alg_{\epsilon} &\Alg_{\epsilon}& \Alg_{\epsilon^2} &\Alg_1\\
\Alg_{\epsilon^2} &\Alg_{\epsilon^2}&\Alg_1 &\Alg_{\epsilon}
\end{array}
\end{equation}

Combining Theorem~\ref{th:quasi} and Theorem~\ref{the:auto} with Example~\ref{ex:n3},  we obtain

\begin{theorem}\label{th:n=3}
The idempotent quasigroup and the algebra automorphism groups are respectively:
\begin{align}
\Aut(\Idm(\Field^3 ,\bullet_\tau))&\cong \mathbb{Z}_7\rtimes_{\mathrm{id}} \mathbb{Z}_7^\times
\label{aut1}\\
\Aut(\Field^3 ,\bullet_\tau)&\cong\mathbb{Z}_7 \rtimes_{\mathrm{\delta}} \mathbb{Z}_3,\label{aut2}
\end{align}
where $\mathbb{Z}_7 \rtimes_{\mathrm{\delta}} \mathbb{Z}_3$ is the smallest  non-abelian group of odd order.
\end{theorem}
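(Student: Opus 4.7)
The plan is to recognize that the main content of Theorem~\ref{th:n=3} is essentially a specialization of the general results Theorem~\ref{th:quasi} and Theorem~\ref{the:auto} to the case $n=3$, together with a verification that $n=3$ is a regular integer and an identification of the resulting group.

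First I would dispatch \eqref{aut1} by direct substitution: Theorem~\ref{th:quasi} asserts that for any single-cycle permutation $\tau=(2\,3\,\ldots\,n\,1)\in S_n$, the automorphism group of the idempotent quasigroup of $(\Field^n,\bullet_\tau)$ is isomorphic to $\mathbb{Z}_{2^n-1}\rtimes_{\mathrm{id}}\mathbb{Z}_{2^n-1}^\times\cong \mathrm{Aff}(\mathbb{Z}_{2^n-1})$. Plugging in $n=3$ yields $\Aut(\Idm(\Field^3,\bullet_\tau))\cong \mathbb{Z}_7\rtimes_{\mathrm{id}}\mathbb{Z}_7^\times$, so \eqref{aut1} is immediate.

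For \eqref{aut2}, I would invoke Theorem~\ref{the:auto}: if $n$ is regular then $\Aut(\Field^n,\bullet_\tau)\cong \mathbb{Z}_{2^n-1}\rtimes_\delta\mathbb{Z}_n$. The hypothesis is supplied by Example~\ref{ex:n3}, where the resultant computation $R(\tfrac{\Lambda_3(z^q)-\Lambda_3(z)}{z(z-1)},\Phi_7(z))=49\ne 0$ for all $q\in \mathbb{Z}_7^\times\setminus\Delta_3$ confirms that $n=3$ is regular. Applying the theorem with $n=3$ therefore gives $\Aut(\Field^3,\bullet_\tau)\cong \mathbb{Z}_7\rtimes_\delta\mathbb{Z}_3$, as claimed.

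Finally I would identify this group. The group $\mathbb{Z}_7\rtimes_\delta\mathbb{Z}_3$ has order $21$ and is non-abelian, since the action $\delta: \mathbb{Z}_3\to \Aut(\mathbb{Z}_7)\cong\mathbb{Z}_6$ sends $1\mapsto 2$, which is a nontrivial automorphism of order $3$ of $\mathbb{Z}_7$ (note $2^3=8\equiv 1\pmod 7$). To see that this is the smallest non-abelian group of odd order, one notes that any group of odd order less than $21$ has order in $\{1,3,5,7,9,11,13,15,17,19\}$: orders that are primes or squares of primes yield abelian groups, and the only remaining case, order $15=3\cdot 5$, is cyclic by Sylow's theorems (the unique Sylow $3$- and $5$-subgroups are normal and commute elementwise). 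Hence $21$ is the smallest possible odd order of a non-abelian group, completing the proof.

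No step here presents a real obstacle: the heavy lifting has been done in the general Theorems \ref{th:quasi} and \ref{the:auto}, and in Example~\ref{ex:n3}. The only mild point to be careful about is making sure $\Field$ contains the required primitive roots (of orders $3$ and $7$), which is built into the $\tau$-admissibility assumption stated at the beginning of Section~\ref{subsec:3}.
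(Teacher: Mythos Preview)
Your proposal is correct and follows exactly the paper's approach: the paper simply states that Theorem~\ref{th:n=3} is obtained by ``combining Theorem~\ref{th:quasi} and Theorem~\ref{the:auto} with Example~\ref{ex:n3}'', which is precisely what you do. Your additional verification that $\mathbb{Z}_7\rtimes_\delta\mathbb{Z}_3$ is the smallest non-abelian group of odd order is a nice supplement, since the paper asserts this without justification.
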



Furthermore, $(\Field^3 ,\bullet_\tau)$ has many other remarkable properties (see \cite{KrTk23a} for a more detailed discussion), for example it satisfies  the algebra identity
$$
(x\bullet_\tau (x\bullet_\tau(x\bullet_\tau y)))=\Delta (x)y, \qquad \forall x,y \in (\Field^3 ,\bullet_\tau),
$$
where $\Delta$ is a multiplicative homomorphism of degree $3$ given explicitly  by a circulant:
$$
\Delta(a_0 e_0+ a_1e_1+ a_2e_2)=\left|
  \begin{array}{ccc}
    a_0 & a_2 & a_1 \\
    a_1 & a_0 & a_2 \\
    a_2 & a_1 & a_0 \\
  \end{array}
\right|:(\Field^3 ,\bullet_\tau)\to (\Field,\bullet).
$$

\subsection{The case $[2+1]$}

Finally, we consider the two-cycle partition $[2+1]$
corresponding to the conjugacy class of {transpositions}.  Without loss of generality we can assume that $\omega=(2\,1)(3)\in S_3$. As in section~\ref{sec:single1}, the algebra $(\Field^3 ,\bullet_\omega)$ is decomposable, more precisely:
\begin{equation}\label{ff3}
(\Field^3 ,\bullet_\omega)\cong (\Field^2 ,\bullet_\tau)\times (\Field ,\bullet_e).
\end{equation}
The second factor is trivial and the first factor is the two-dimensional \textit{Harada algebra} \cite{Harada84}, i.e. a uniquely determined up to isomorphism two-dimensional commutative algebra generated by two distinct idempotents $c_1$ and $c_2$ subject to the condition $c_1\bullet c_2=-c_1-c_2$  (we don't need this characterization later and leave the details to an interested reader).

Combining \eqref{ff3} with Example~\ref{ex:n2},  we get

\begin{proposition}
There holds
$
\Aut(\Field^3 ,\bullet_\omega)\cong S_3.
$
\end{proposition}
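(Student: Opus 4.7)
The plan is to exploit the decomposition \eqref{ff3} and show that every algebra automorphism respects it, thereby reducing the computation to $\Aut(\Field^2, \bullet_\tau) \times \Aut(\Field, \bullet_e)$.

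First I would identify a canonical fixed idempotent. Let $f$ denote the unique nonzero idempotent of the summand $(\Field, \bullet_e)$; under the isomorphism of Proposition~\ref{prB} it corresponds to the standard basis vector $e_3$ attached to the fixed point of $\omega$. Applying Theorem~\ref{thB}(b) with $r = 2$, $|\omega_1| = 2$, $|\omega_2| = 1$, together with Proposition~\ref{pro:decompose}, the characteristic polynomial of $L_{\bullet_\omega}(f)$ is $\lambda^2(\lambda - 1)$, and the corresponding stratum $\alpha = (0, 1)$ is the singleton $\{f\}$, because the one-dimensional summand carries exactly one nonzero idempotent. Since any $\phi \in \Aut(\Field^3, \bullet_\omega)$ preserves characteristic polynomials of left multiplications, $\phi(f) = f$.

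Combining $\phi(f) = f$ with the homomorphism identity $\phi \circ L_{\bullet_\omega}(f) = L_{\bullet_\omega}(f) \circ \phi$, both $\ker L_{\bullet_\omega}(f)$ and $\im L_{\bullet_\omega}(f)$ are $\phi$-invariant. A direct check in the standard basis identifies them with the two ideal summands $(\Field^2, \bullet_\tau)$ and $\Field \cdot f \cong (\Field, \bullet_e)$. Hence $\phi$ restricts to algebra automorphisms of both summands, and the restriction map $\Aut(\Field^3, \bullet_\omega) \to \Aut(\Field^2, \bullet_\tau) \times \Aut(\Field, \bullet_e)$ is a well-defined group homomorphism; injectivity is immediate and surjectivity follows because the summands are orthogonal ideals, so any pair of automorphisms glues back to an automorphism of the direct sum.

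Finally, an algebra automorphism of the one-dimensional algebra $(\Field, \bullet_e)$ is multiplication by some $\lambda$ with $\lambda = \lambda^2$, forcing $\lambda = 1$. Together with $\Aut(\Field^2, \bullet_\tau) \cong S_3$ from Example~\ref{ex:n2}, this gives $\Aut(\Field^3, \bullet_\omega) \cong S_3$. I do not anticipate a serious obstacle; the only substantive content is the observation that the stratum containing $f$ is a singleton, which makes $f$ a canonical invariant under automorphisms and unlocks the whole direct-sum argument.
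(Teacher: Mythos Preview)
Your proposal is correct and follows the same route as the paper: invoke the decomposition \eqref{ff3} and Example~\ref{ex:n2}. The paper's proof is in fact just the one-line citation ``Combining \eqref{ff3} with Example~\ref{ex:n2}, we get\ldots'', so your argument is strictly more complete: you supply the missing verification that every automorphism of $(\Field^3,\bullet_\omega)$ respects the direct-sum splitting, via the observation that the idempotent $f=e_3$ is uniquely characterised by its characteristic polynomial $\lambda^2(\lambda-1)$ and is therefore fixed by any automorphism.
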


\section{Final remarks and questions}

There are at least two natural questions that remained unanswered in this article: 1) Determine the automorphism group of the obtained inner isotopes of commutative associative algebras $(\Field^n,\bullet)$ for the general dimensions, and 2) Which of the constructed  above inner isotopes are axial algebras.  Note that in Section~\ref{sec:example} we completely discussed these two questions in the case $n=3$ (the case $n=2$ is trivial). A further analysis reveals that by the same methods are applicable to $n=4$; also some partial results were mentioned in \cite{KrTk23a}.

Another interesting natural question is how to apply the present methods to general \textit{nonassociative} commutative algebras or at least to  inner isotopes of the 2nd order:
$$
(\Alg,\star)\rightsquigarrow(\Alg,\star_h)\rightsquigarrow(\Alg,{\star_h}_g)\rightsquigarrow\ldots
$$
where $h\in \Aut(\Alg, \star)$, $g\in \Aut(\Alg, \star_h)$ etc?

Finally, we mention that some methods and ideas of the present paper (inner isotopies) can be useful in the case $\Field =\mathbb{C}$ in the study, for example, of algebras of holomorphic functions in the unit disk like Bergman and Bloch spaces of holomorphic functions defined on the open unit disc in the complex plane \cite{Hedenmalm}.

{\small\bibliography{cimart_tkachev}}
\EditInfo{September 1, 2023}{July 26, 2024}{Adam Chapman, Mohamed Elhamdadi and Ivan Kaygorodov}
\end{document}